\pgfplotsset{compat=1.12}
\tikzset{
% decision/.style={diamond, minimum height=5pt, minimum width=10pt, inner sep=1pt},
decision/.style={diamond, inner sep=1pt},
chance/.style={circle, minimum width=10pt, draw=blue, fill=none, thick, inner sep=0pt},
}
\titleformat{\section}{\large\bfseries}{\thesection\;\;\;}{0em}{}
\titleformat{\subsection}{\normalsize\bfseries\selectfont}{\thesubsection\;\;\;}{0em}{}
\setlist[itemize]{wide=0pt, leftmargin=16pt, labelwidth=10pt, align=left}
\renewcommand{\cftsecpresnum}{\begin{lrbox}{\@tempboxa}}
\renewcommand{\cftsecaftersnum}{\end{lrbox}}
\newcommand{\rN}{\mathrm{N}}
\newcommand{\Gal}{\mathrm{Gal}}
\newcommand{\Mod}[1]{\ (\mathrm{mod}\ #1)}
\newcommand{\bF}{\mathbb{F}}
\newcommand{\bQ}{\mathbb{Q}}
\newcommand{\bZ}{\mathbb{Z}}
\newcommand{\cO}{\mathcal{O}}
\newcommand{\fra}{\mathfrak{a}}
\newcommand{\frp}{\mathfrak{p}}
\theoremstyle{definition}
\newtheorem{theorem}{Theorem}[section]
\newtheorem{proposition}[theorem]{Proposition}
\newtheorem{lemma}[theorem]{Lemma}
\newtheorem{corollary}[theorem]{Corollary}
\newtheorem{example}[theorem]{Example}
\newtheorem*{theorem*}{Theorem}
\newtheorem*{proposition*}{Proposition}
\newtheorem*{lemma*}{Lemma}
\newtheorem*{corollary*}{Corollary}
\newtheorem*{definition*}{Definition}
\newtheorem*{example*}{Example}
\newtheorem*{remark*}{Remark}
\theoremstyle{remark}
\newtheorem{remark}[theorem]{Remark}
\newcommand{\meanstd}[2]{#1 \text{\tiny $\pm$ #2}}
\pgfplotsset{compat=1.18}
    \xdef\CMmax{\fp_to_decimal:N \g_cm_max_fp}
\NewDocumentCommand{\ConfusionMatrix}{ O{} m m m }{%
  % #1: extra pgfplots axis options
  % #2: number of classes
  % #3: comma-separated class labels
  % #4: entries as x/y/value, x/y/value, ...

  \CMSetMaxFromEntries{#4}%

  % Build coordinate lists:
  % - all cells for the heatmap
  % - light cells for dark-blue labels
  % - dark cells for white labels
  \def\CM@coords{}%
  \def\CM@lightcoords{}%
  \def\CM@darkcoords{}%
  \foreach \CMx/\CMy/\CMval in {#4}{%
    \xdef\CM@coords{\CM@coords (\CMx,\CMy)[\CMval]}%
    \CMIfDarkTF{\CMval}{%
      \xdef\CM@darkcoords{\CM@darkcoords (\CMx,\CMy)[\CMval]}%
    }{%
      \xdef\CM@lightcoords{\CM@lightcoords (\CMx,\CMy)[\CMval]}%
    }%
  }%

  \begin{tikzpicture}
    \begin{axis}[
      width=9cm,
      height=7.5cm,
      scale only axis,
      xmin=0.5, xmax=#2+0.5,
      ymin=0.5, ymax=#2+0.5,
      y dir=reverse,
      xtick={1,...,#2},
      ytick={1,...,#2},
      xticklabels={#3},
      yticklabels={#3},
      xlabel={Predicted label},
      ylabel={True label},
      enlargelimits=false,
      axis on top,
      colormap/Blues-9,
      point meta min=0,
      point meta max=\CMmax,
      colorbar,
      colorbar style={
        width=0.38cm,
        scaled y ticks=false,
        yticklabel style={
          /pgf/number format/fixed,
          /pgf/number format/precision=0,
          /pgf/number format/1000 sep={}
        }
      },
      xlabel style={font=\sffamily\Large},
      ylabel style={font=\sffamily\Large},
      tick label style={font=\sffamily\Large},
      #1
    ]
      % Heatmap cells
      \addplot[
        matrix plot*,
        mesh/cols=#2,
        point meta=explicit,
      ] coordinates {\CM@coords};

    % Numbers on light cells
    \addplot[
      only marks,
      mark=none,
      point meta=explicit,
      nodes near coords={
        \pgfmathprintnumber[
          fixed,
          fixed zerofill,
          precision=2,
          1000 sep={}
        ]{\pgfplotspointmeta}
      },
      every node near coord/.append style={
        font=\sffamily\fontsize{18}{18}\selectfont,
        text=blue!80!black,
        yshift=-6pt,
      },
    ] coordinates {\CM@lightcoords};
    
    % Numbers on dark cells
    \addplot[
      only marks,
      mark=none,
      point meta=explicit,
      nodes near coords={
        \pgfmathprintnumber[
          fixed,
          fixed zerofill,
          precision=2,
          1000 sep={}
        ]{\pgfplotspointmeta}
      },
      every node near coord/.append style={
        font=\sffamily\fontsize{18}{18}\selectfont,
        text=white,
        yshift=-6pt,
      },
    ] coordinates {\CM@darkcoords};
    \end{axis}
  \end{tikzpicture}%
}
\NewDocumentCommand{\BinaryConfusionMatrix}{ O{} m m m m m m }{%
  % #2 class 1 label, #3 class 2 label
  % #4 top-left, #5 top-right, #6 bottom-left, #7 bottom-right
  \ConfusionMatrix[#1]{2}{#2,#3}{1/1/#4,2/1/#5,1/2/#6,2/2/#7}%
}
\begin{document}

% --- Main title and subtitle ---

\title{Machines Learn Number Fields, But How? \\ The Case of Galois Groups}

% --- Author and date of last update ---

\author{Kyu-Hwan Lee and Seewoo Lee}
% \date{\normalsize\vspace{-1ex} Last updated: \today}
\date{}

% --- Add title and table of contents ---

\maketitle

% --- Abstracts ---

% \tableofcontents\label{sec:contents}
\begin{abstract}
By applying interpretable machine learning methods such as decision trees, we study how simple models can classify the Galois groups of \emph{Galois extensions} over $\mathbb{Q}$ of degrees \emph{4, 6, 8, 9, and 10}, using Dedekind zeta coefficients. Our interpretation of the machine learning results allows us to understand how the distribution of zeta coefficients depends on the Galois group, and to prove new criteria for classifying the Galois groups of these extensions. Combined with previous results, this work provides another example of a new paradigm in mathematical research driven by machine learning.
\end{abstract}

% --- Main content: import lectures as subfiles ---

%%%%%%%%%%%% INTRODUCTION %%%%%%%%%%%%%%
\section{Introduction}

In recent years, machine learning (ML) has begun to influence the landscape of pure mathematics, echoing its transformative impact on the natural sciences. Around 2017, experimental efforts emerged to apply ML techniques to mathematical problems, revealing that machines can often detect structural patterns in mathematical data. Early examples arose in algebraic geometry \cite{HE2017564,Carifio2017,Ruehle2017,Krefl2017}, followed by applications in number theory \cite{he2022machine, HLOa, HLOc, HLOP}, representation theory \cite{davies2021advancing, Kr}, and knot theory \cite{davies2021advancing}. 

Several of these studies reported remarkable empirical successes, achieving high accuracy in classifying mathematical objects according to their invariants. Interpreting what the machine learns can sometimes lead to new discoveries or perspectives on the underlying mathematical structures. The recent discovery of {\em murmuration} phenomena in arithmetic is a prototypical example \cite{HLOP, quanta}. Together, these developments suggest a new paradigm for mathematical research \cite{davies2021advancing, douglaslee_mds}: generate datasets, apply ML tools, interpret the results, gain new perspectives, formulate conjectures, and prove them.

The purpose of this paper is to complete a full cycle of this paradigm in the study of Galois groups. In earlier work by the first-named author with He and Oliver \cite{he2022machine}, machine learning algorithms were applied to the classification of number fields by invariants such as Galois groups, class numbers, and unit ranks, using input features derived from the coefficients of defining polynomials or Dedekind zeta functions. While many of these classification tasks achieved remarkably high accuracy, the mathematical rationale behind the machine's decision-making remained opaque. One notable case was the class numbers of real quadratic fields, where subsequent efforts to interpret the ML results revealed that the algorithms were, in effect, rediscovering classical results from Gauss's genus theory \cite{amir2023machine}.

In this paper, we focus on Galois groups as the next case study. We apply interpretable machine learning methods, particularly decision trees, to the problem of predicting Galois groups of number fields. By analyzing the structure of the resulting models, we are able to reverse-engineer the machine's logic and extract precise mathematical statements which we formulate into new conjectures about the determination of Galois groups. Unlike the case of class numbers of real quadratic fields and many earlier examples, these new conjectures are mathematically significant, and we prove them by traditional means.  
As an example, we prove the following  proposition, motivated by an analysis of the classification logic of a decision tree model (see Example \ref{ex:C9_1000} for the proof).
\begin{proposition*}
    Let $K / \bQ$ be a nonic Galois extension, and let $a_n(K)$ be the $n$-th coefficient of the Dedekind zeta function $\zeta_K(s)$.
     If $a_{1000}(K) \le 4$, then $K / \bQ$ is a cyclic extension.
\end{proposition*}
We obtain many other similar and more general results (see Corollaries \ref{cor:ellsqzeta}, \ref{cor:sextic_zc}, \ref{cor:decic_zeta}, \ref{cor:octic_zeta_ab} and \ref{cor:octic_ab_nab}).
Although Galois groups can be determined in various ways, our results offer very simple criteria that could be formulated only after analyzing the ML results. Our work demonstrates how machine learning can serve not merely as a black-box classifier, but as a tool for generating conjectures and aiding in the discovery of rigorous mathematical theorems.
 We anticipate that this approach will continue to be adopted and will lead to many fruitful discoveries in the future.

After this introduction, the next section presents preliminaries on the Dedekind zeta function and the ramification of local fields. Section \ref{sec:galois} contains the main results, along with a detailed explanation of the ML experiments. The corresponding code is available in the GitHub repository \href{https://github.com/seewoo5/ML-NF}{https://github.com/seewoo5/ML-NF}, which can be used to reproduce all the experiments in this paper.
The final section offers concluding remarks.

\paragraph{Acknowledgments}

We thank Jordan Ellenberg, Tony Feng, Yang-Hui He, Hyukpyo Hong and Sug Woo Shin for helpful discussions.
We are also grateful to John Jones for answering questions on local fields in \texttt{LMFDB}, and to David Roe for helping us  use the \texttt{lmfdb-lite} library.
We also thank ChatGPT for assisting us in writing  \LaTeX \ scripts for the decision tree figures and  in searching the literature.

%%%%%%%%%%%% PRELIMINARIES %%%%%%%%%%%

\section{Preliminaries}

\subsection{Dedekind zeta function}
\label{subsec:prelim_dedekindzeta}

Let $K / \bQ$ be a number field and $\cO_K$ be the ring of integer of $K$.
For each nonzero ideal $\fra \subset \cO_K$, let $\rN \fra \coloneqq [\cO_K : \fra]$ be its ideal norm.
The Dedekind zeta function $\zeta_K(s)$ is defined by 
\begin{equation}
    \zeta_K(s) \coloneqq \sum_{0 \ne \fra \subset \cO_K} \frac{1}{\rN\fra^s} = \sum_{n \ge 1} \frac{a_n(K)}{n^s},
\end{equation}
where $a_n(K) = \#\{\fra \subset \cO: \rN \fra = n\}$ counts the number of ideals with norm $n$,  which is always a nonnegative integer.
It admits an Euler product
\[
\zeta_K(s) = \prod_{\frp} (1 - \rN\frp^{-s})^{-1} = \prod_{p} \prod_{\frp|p} (1 - \rN\frp^{-s})^{-1},
\]
where the first product is over the prime ideals of $\cO_K$, and the $p$-th Euler factor for a rational prime $p$ is defined by
\begin{equation}
    \zeta_{K, p}(s) \coloneqq \prod_{\frp|p} (1 - \rN \frp^{-s})^{-1}.
\end{equation}
The zeta coefficients are multiplicative; that is, we have $a_{mn}(K) = a_{m}(K)a_{n}(K)$ for any coprime $m, n$. Moreover, the Euler factor $\zeta_{K, p}(s)$ is completely determined by the decomposition behavior of $p$ in $K$, as stated in the following standard proposition, whose proof we omit.

\begin{proposition}
    \label{prop:dedekind_euler}
    Let $K / \bQ$ be a number field and $p$ be a rational prime. Assume that the prime $p$ factors in $K$ as $(p) = \frp_1^{e_1} \cdots \frp_{g}^{e_g}$, and let $f_i = f(\frp_i | p)$ be the inertia degree of $\frp_i$ over $p$, so that $\rN \frp_i = p^{f_i}$.
    Then the Euler factor can be written as
    \begin{equation}
        \zeta_{K, p}(s) = \prod_{i=1}^{g} (1 - p^{-f_i s})^{-1}.
    \end{equation}
    In particular, when $K / \bQ$ is Galois, all $e_i$'s and $f_i$'s are equal for $i=1,2, \dots , g$, and
    \begin{equation}
    \label{eqn:localzeta}
        \zeta_{K, p}(s) = (1 - p^{-fs})^{-g} = \sum_{k \ge 0} \binom{g + k - 1}{g - 1} p^{-fks}.
    \end{equation}
\end{proposition}
\begin{corollary}
    \label{cor:prime_pow_zeta_coeff}
   Let $K / \bQ$ be a Galois extension and $p$ be a rational prime.
   Assume that $p$ decomposes in $K$ as $(p) = (\frp_1 \cdots \frp_g)^{e}$ with $\rN \frp_i =p^f$ for all $1 \le i \le g$.
   Then the $p^a$-th zeta coefficient for $a \in \mathbb Z_{\ge 0}$ is given by
   \begin{equation}
   \label{eqn:dedekindzeta_primepow}
       a_{p^{a}}(K) = \begin{cases} \binom{g + \frac{a}{f} - 1}{g-1} & \text{ if } f \mid a, \\ 0 & \text{ otherwise}.\end{cases}
   \end{equation}
\end{corollary}

\begin{proof}
    If $f \nmid a$, then $a_{p^a}(K) = 0$ since $p^{-as}$ does not appear in \eqref{eqn:localzeta}.
    If $f \mid a$, take $k = \frac{a}{f}$.
\end{proof}

\subsection{Ramification of local fields}
\label{subsec:prelim_tameram}

We review ramification theory of local fields, which will be useful for studying decomposition behavior of primes.
Let $L / K$ be a finite Galois extension of local fields with $G = \Gal(L / K)$.
Let $\cO_K, \pi_K, v_K, k$ be the ring of integer, uniformizer, valuation, and residue field of $K$, and similarly $\cO_L, \pi_L, v_L, \ell$.
For $i \ge -1$, define the $i$-th ramification group to be
\begin{equation}
    G_i \coloneqq \{\sigma \in G: v_L(\sigma(x) - x) \ge i + 1 \,\,\forall x \in \cO_L\}.
\end{equation}
Then $\{ G_i \}_{i \ge -1}$ form a decreasing filtration of $G$.
The group $G_0$ (resp. $G_1$) is often called the inertia subgroup (resp. wild inertia subgroup), and we denote their fixed fields as $L^u = L^{G_0}$ and $L^t = L^{G_1}$.
We have $G_0 / G_1 \hookrightarrow \cO_L^\times / (1 + \pi_L \cO_L) \simeq \ell^\times$ and $G_{i} / G_{i+1} \hookrightarrow (1 + \pi_L^{i} \cO_L) / (1 + \pi_L^{i+1} \cO_L) \simeq \ell$ for $i \ge 1$, where the first embedding is given by $\sigma \mapsto \sigma(\pi_L) / \pi_L$ which is independent of choice of the uniformizer $\pi_L$.
When $K = K_{\mathfrak{p}}'$ and $L = L_\mathfrak{P}'$ are localizations of a number field $K'$ and its extension $L'$, we may also denote the decomposition group and inertia group as $D_{\mathfrak{P}}$ and $I_{\mathfrak{P}}$.
When $L'/K'$ is abelian, the decomposition and inertia groups only depend on $\frp$, and we denote them as $D_{\frp}$ or $I_\frp$ instead.

Let $e = e_{L/K}$ be the ramification degree ($\pi_K = \varepsilon\pi_L^e$ for some $\varepsilon \in \cO_L^\times$) and $f = f_{L/K} = [\ell : k]$ be the inertia degree.
Then $L^u /  K$ is an unramified extension of degree $f$, and $L^t / L^u$ is a tamely ramified extension of degree equal to the prime-to-$p$ part of $e$, and $L / L^t$ is a wildly ramified extension of degree equal to the $p$-part of $e$.
We say that the extension $L / K$ is tamely ramified if $L^t = L$, i.e. $G_1 = \{1\}$ or equivalently, $(p, e) = 1$.
Especially, $G_{0} \hookrightarrow \ell^\times$ and the inertia group has to be cyclic of order $e$. 
Moreover, we have the following proposition.

\begin{proposition}[{ Greenberg\cite[Lemma 1]{greenberg1974elementary},  Newton \cite[Proposition 2.2.9]{newton2012explicit}}]
\label{prop:newton}
    Let $L/K$ be a tamely ramified abelian extension of local fields, and $e = e_{L/K}$ be its ramification degree.
    Let $k$ be the residue field of $K$.
    Then $e$ divides $\#k^\times$.
\end{proposition}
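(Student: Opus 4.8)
The plan is to reduce the abelian tame case to a statement about the structure of the inertia group and the action of Frobenius, and then extract a divisibility constraint from commutativity. First I would recall from the preliminaries that since $L/K$ is tamely ramified, the inertia group $G_0$ embeds into $\ell^\times$ via $\sigma \mapsto \sigma(\pi_L)/\pi_L$, so $G_0$ is cyclic of order $e$. In particular, since $e = \#G_0$ and $G_0 \hookrightarrow \ell^\times$, we immediately get $e \mid \#\ell^\times = \#\ell - 1$. But the claim asserts the sharper statement $e \mid \#k^\times = \#k - 1$, which is where abelianness must enter, since $\#\ell = (\#k)^f$ and a priori only $e \mid (\#k)^f - 1$ is automatic from the unrefined embedding.

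The key point is to use the action of the decomposition group on the inertia group. Let $\sigma$ generate $G_0$ and let $\tau \in G$ be a lift of the Frobenius element, which acts on the residue field $\ell$ by $x \mapsto x^{q}$ with $q = \#k$. The embedding $G_0 \hookrightarrow \ell^\times$ is $\Gal(\ell/k)$-equivariant in the sense that conjugation by $\tau$ corresponds to the $q$-power map on $\ell^\times$; that is, $\tau \sigma \tau^{-1}$ maps to $(\sigma(\pi_L)/\pi_L)^{q}$ under the embedding. Here I would use the standard compatibility of the tame character with the Galois action on $\ell$, so that conjugation by Frobenius raises the image to the $q$-th power. Thus $\tau \sigma \tau^{-1} = \sigma^{q}$ as elements of $G_0 \cong \mathbb{Z}/e\mathbb{Z}$.

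Now the hypothesis that $L/K$ is \emph{abelian} forces $\tau \sigma \tau^{-1} = \sigma$, hence $\sigma^{q} = \sigma$, i.e. $\sigma^{q-1} = 1$ in $G_0$. Since $\sigma$ has exact order $e$, this gives $e \mid q - 1 = \#k - 1 = \#k^\times$, which is exactly the desired conclusion. So the logical skeleton is: tameness gives cyclicity and the tame embedding, the Frobenius acts as the $q$-power, and abelianness collapses this action to the identity, yielding the divisibility.

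The main obstacle, and the step deserving the most care, is justifying that conjugation by a Frobenius lift acts on the tame inertia character by the $q$-power map. This is the crux and is precisely the content cited in Proposition \ref{prop:newton}'s references. I would establish it by noting that the tame character $\theta\colon G_0 \to \ell^\times$, $\theta(\sigma) = \sigma(\pi_L)/\pi_L$, is canonical (independent of the uniformizer), so for any $g \in G$ one has $\theta(g\sigma g^{-1}) = \bar{g}(\theta(\sigma))$, where $\bar{g}$ denotes the induced automorphism of $\ell$; taking $g = \tau$ a Frobenius lift makes $\bar{g}$ the $q$-power automorphism. One subtlety to address is that a Frobenius lift exists as an honest element of $G$ only after fixing compatible choices, but since we only use its image in $\Gal(\ell/k)$ and the conjugation action on the normal subgroup $G_0$, any lift suffices and the argument is insensitive to the choice.
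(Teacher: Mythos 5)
Your proof is correct, and it is essentially the argument given in the sources the paper cites for Proposition \ref{prop:newton} (the paper itself states the result without proof): tameness makes the character $\sigma \mapsto \sigma(\pi_L)/\pi_L$ an embedding $G_0 \hookrightarrow \ell^\times$, conjugation-equivariance of this character shows a Frobenius lift acts on $G_0$ by the $q$-power map with $q = \#k$, and abelianness then forces $\sigma^{q-1} = 1$, hence $e \mid \#k^\times$. Note that your key step is precisely the general-base-field form of Proposition \ref{prop:tamefrobaction}, which the paper quotes from Iwasawa only for $K = \mathbb{Q}_p$, so your write-up in effect derives that compatibility and the divisibility statement in one stroke.
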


For a tamely ramified extension of  $K$,
the conjugation action of (a lift of) Frobenius on the tame inertia group is simply given by $\#k$-power map:
\begin{proposition}[{ Iwasawa\cite{iwasawa1955galois}}]
\label{prop:tamefrobaction}
     Let $L / K$ be a tamely ramified extension with ramification degree $e$.
    Let $\tau$ be a generator of the inertia subgroup $G_0 \le G = \Gal(L / K)$ and $\sigma \in G$ be a lift of a Frobenius.
    Then  $\sigma \tau \sigma^{-1} = \tau^q$, where $q = \# k$ is the size of the residue field of $K$.
\end{proposition}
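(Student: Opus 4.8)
The plan is to use the tame character and its behavior under conjugation by the whole Galois group. Since $K/\bQ_p$ is tamely ramified we have $G_1 = \{1\}$, so the embedding recalled above,
\[
\theta\colon G_0 \hookrightarrow \ell^\times, \qquad \theta(\rho) = \overline{\rho(\pi)/\pi},
\]
is an injective homomorphism with cyclic image of order $e$; here $\ell$ is the residue field of $K$, the bar denotes reduction modulo the maximal ideal $\frm$ of $\cO_K$, and $\pi$ is \emph{any} uniformizer, the point being that $\theta(\rho)$ does not depend on this choice. Because $G_0$ is the inertia subgroup it is normal in $G$, so $\sigma\tau\sigma^{-1} \in G_0$ and it is legitimate to compare its tame character with that of $\tau$. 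As $\theta$ is injective, it suffices to prove $\theta(\sigma\tau\sigma^{-1}) = \theta(\tau)^p$ in $\ell^\times$; since $\theta$ is a homomorphism the right-hand side equals $\theta(\tau^p)$, and injectivity then forces $\sigma\tau\sigma^{-1} = \tau^p$.

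The key step is the twisted equivariance $\theta(\sigma\rho\sigma^{-1}) = \bar\sigma\big(\theta(\rho)\big)$ for every $\rho \in G_0$, where $\bar\sigma \in \Gal(\ell/\bF_p)$ is the reduction of $\sigma$. To establish it, fix a uniformizer $\pi$ and set $w \coloneqq \sigma^{-1}(\pi)$, which is again a uniformizer since $\sigma$ preserves the valuation. Using $\sigma(w) = \pi$ I compute
\[
\frac{(\sigma\rho\sigma^{-1})(\pi)}{\pi} = \frac{\sigma\big(\rho(w)\big)}{\sigma(w)} = \sigma\!\left(\frac{\rho(w)}{w}\right).
\]
Now $\rho(w)/w \in \cO_K^\times$, reduction modulo $\frm$ is $\sigma$-equivariant (because $\sigma$ preserves $\cO_K$ and $\frm$, inducing $\bar\sigma$ on $\ell$), and $\overline{\rho(w)/w} = \theta(\rho)$ by independence of the uniformizer. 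Reducing the displayed identity therefore yields $\theta(\sigma\rho\sigma^{-1}) = \bar\sigma(\theta(\rho))$.

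Finally, because $\sigma$ is a lift of Frobenius, its reduction $\bar\sigma$ is the arithmetic Frobenius of $\ell/\bF_p$, namely $x \mapsto x^p$. Taking $\rho = \tau$ in the equivariance relation gives $\theta(\sigma\tau\sigma^{-1}) = \bar\sigma(\theta(\tau)) = \theta(\tau)^p = \theta(\tau^p)$, and injectivity of $\theta$ finishes the argument. I expect the only genuine subtlety to lie in the middle step—correctly switching to the uniformizer $w = \sigma^{-1}(\pi)$, invoking independence of $\theta$ from the uniformizer, and using $\sigma$-equivariance of reduction—rather than any deep input; the rest is formal manipulation of the tame character whose properties were recorded above.
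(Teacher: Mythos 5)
Your proof is correct. There is nothing in the paper to compare it against: the paper states this proposition as a quoted result from Iwasawa's 1955 paper and gives no proof of its own, so your argument supplies exactly what the paper delegates to the reference. What you wrote is the standard tame-character computation, and every step checks out: the map $\theta\colon G_0 \to \ell^\times$, $\rho \mapsto \overline{\rho(\pi)/\pi}$, is the embedding the paper records in Section~2.2 (its kernel is $G_1$, which is trivial by tameness, giving injectivity, and its uniformizer-independence---valid precisely because $\rho \in G_0$---also yields the homomorphism property you invoke); the substitution $w = \sigma^{-1}(\pi)$ is legitimate since Galois automorphisms preserve the valuation, so $w$ is again a uniformizer and $\sigma$ induces $\bar\sigma$ on $\ell = \cO_K/\frm$, giving the equivariance $\theta(\sigma\rho\sigma^{-1}) = \bar\sigma(\theta(\rho))$; normality of $G_0$ in $G$ ensures $\sigma\tau\sigma^{-1}$ lies in the domain of $\theta$; and $\bar\sigma(x) = x^p$ because $\sigma$ lifts Frobenius. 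Injectivity then forces $\sigma\tau\sigma^{-1} = \tau^p$, as claimed.
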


\begin{remark}
 For the later proofs, we will only use the case when $K = \bQ_p$ and $q = p$ (see Lemma \ref{lem:p16_cyclic} and \ref{lem:p110_cyclic}).
\end{remark}

One may ask whether a local field with a given decomposition type can be \emph{globalized}; that is, whether an extension $K / \bQ_p$ arises as the localization of a number field $K' / \bQ$.
One may even ask whether we can choose $K'$ to have the same Galois group, i.e., $\Gal(K' / \bQ) \simeq \Gal(K / \bQ_p)$.
When such a $K'$ exists and $K' \simeq \bQ(\alpha)$, $K \simeq \bQ_p(\alpha)$ for a root $\alpha$ of some $f(x) \in \bZ[x]$, we call the polynomial $f(x)$ a \emph{Galois splitting model}.
Carrillo \cite{carrillo2024finding} proposed several methods to find it, including Panayi's root finding algorithm \cite[Algorithm 8.4]{pauli2001computation}.

However, such a field $K'$ does not always exist. Indeed, Wang provided a famous counterexample to the incorrect proof of ``Grunwald's theorem'' \cite{wang1948counter, wang1950grunwald}.
In particular, Wang proved the following:
\begin{proposition}[{ Wang \cite{wang1948counter}}]
\label{thm:wangcounterex}
    Let $s \ge 3$ and $K' / \bQ$ be a cyclic extension of order $2^s$.
    Then the prime $2$ either totally ramifies in $K'$ or has at least two distinct prime factors.
\end{proposition}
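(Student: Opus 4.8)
The plan is to pass to the language of Dirichlet characters via Kronecker--Weber and then reduce the statement to an elementary clash between high $2$-divisibility and the second supplement to the law of quadratic reciprocity. Since $K'/\bQ$ is cyclic, $\Gal(K'/\bQ)\cong\bZ/2^s\bZ$ is cut out by a single primitive Dirichlet character $\chi$ of order $2^s$; let its conductor be $n=2^a m$ with $m$ odd, and factor $\chi=\chi_2\,\psi$ into its $2$-part $\chi_2$ (of conductor $2^a$) and its odd part $\psi$ (of conductor $m$). Under the identification $\Gal(\bQ(\zeta_n)/\bQ)\cong(\bZ/n\bZ)^\times=(\bZ/2^a\bZ)^\times\times(\bZ/m\bZ)^\times$, the inertia subgroup at $2$ is exactly the first factor (as $2$ is unramified in $\bQ(\zeta_m)$ and totally ramified in $\bQ(\zeta_{2^a})$), so the ramification index is $e=\mathrm{ord}(\chi_2)$ and a Frobenius lift at $2$ maps to $\psi(2)$. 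Consequently the image of the decomposition group is $\langle\mathrm{im}\,\chi_2,\,\psi(2)\rangle\le\bZ/2^s\bZ$, the number of primes above $2$ is $g=[\bZ/2^s\bZ:\langle\mathrm{im}\,\chi_2,\psi(2)\rangle]$, and $2$ is totally ramified precisely when $\mathrm{ord}(\chi_2)=2^s$. This dictionary is the step that must be set up with care, but it is routine once the inertia/Frobenius correspondence is in hand.

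I would then argue by contradiction: suppose $2$ is neither totally ramified nor has at least two prime factors, i.e.\ $g=1$ while $e=\mathrm{ord}(\chi_2)<2^s$. Because $\bZ/2^s\bZ$ is cyclic, every proper subgroup lies in the unique index-two subgroup $2\bZ/2^s\bZ$ (the squares), so $\mathrm{im}\,\chi_2\subseteq 2\bZ/2^s\bZ$. For $\langle\mathrm{im}\,\chi_2,\psi(2)\rangle$ to fill all of $\bZ/2^s\bZ$ we must then have $\psi(2)\notin 2\bZ/2^s\bZ$, i.e.\ $\psi(2)$ is a generator of order $2^s$; in particular $\psi$ has order $2^s$ and odd conductor.

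Now comes the crux. Let $\eta=\psi^{2^{s-1}}$ be the unique quadratic character inside $\langle\psi\rangle$; it has odd conductor, and since $\psi(2)$ is a primitive $2^s$-th root of unity we get $\eta(2)=\psi(2)^{2^{s-1}}=-1$. Writing $\psi=\prod_{p\mid m}\psi_p$ over the odd primes, each $\psi_p$ has $2$-power order dividing $2^s$, and $\eta_p=\psi_p^{2^{s-1}}$ is nontrivial exactly when $\mathrm{ord}(\psi_p)=2^s$. But $\psi_p$ is a character of the cyclic group $(\bZ/p^{a_p}\bZ)^\times$, whose $2$-part has order equal to the $2$-part of $p-1$; hence $\mathrm{ord}(\psi_p)=2^s$ forces $2^s\mid p-1$, and since $s\ge 3$ this yields $p\equiv 1\pmod 8$. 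Thus every prime ramifying in $\eta$ is $\equiv 1\pmod 8$, so $\eta=\prod_{p\in S}\left(\tfrac{\,\cdot\,}{p}\right)$ for a set $S$ of primes all $\equiv 1\pmod 8$, and the second supplement to quadratic reciprocity gives $\eta(2)=\prod_{p\in S}\left(\tfrac{2}{p}\right)=\prod_{p\in S}(+1)=+1$, contradicting $\eta(2)=-1$.

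The main obstacle is conceptual rather than computational: it is pinning down exactly where the hypothesis $s\ge 3$ enters. For $s\le 2$ no contradiction arises --- indeed $2$ is inert in the cyclic quartic field $\bQ(\zeta_5)$, and in $\bQ(\sqrt5)$ when $s=1$ --- because only for $s\ge 3$ does the order-$2^s$ condition force the ramified primes of the quadratic subcharacter to satisfy $p\equiv 1\pmod 8$, at which point $\left(\tfrac2p\right)=1$ closes the argument. A secondary point requiring attention is the decomposition-group dictionary in the first paragraph; an equivalent and perhaps more transparent route is to pass to the completion $K'_{\mathfrak P}/\bQ_2$, observe that $g=1$ makes it cyclic of degree $2^s$ and that failure of total ramification forces its quadratic subfield to be the unramified $\bQ_2(\sqrt5)$, and then transport this constraint back to the quadratic subfield of $K'$.
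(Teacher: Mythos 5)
Your proposal is correct, but it cannot be compared to ``the paper's proof'' in the usual sense: the paper states this proposition as a quoted result of Wang, citing \cite{wang1948counter}, and supplies no argument whatsoever. What you have written is therefore a self-contained substitute for that citation, and it checks out. The dictionary in your first paragraph is right: inertia at $2$ in $\Gal(\bQ(\zeta_{2^a m})/\bQ)$ is $(\bZ/2^a\bZ)^\times\times\{1\}$, so $e=\mathrm{ord}(\chi_2)$, and $(1,\,2\bmod m)$ is a legitimate Frobenius lift, so the image of the decomposition group is exactly $\langle \mathrm{im}\,\chi_2,\psi(2)\rangle$ (strictly, a general Frobenius lift maps to $\chi_2(u)\psi(2)$ for some $u$, which only equals $\psi(2)$ modulo the image of inertia, but this is harmless since you only use the generated subgroup). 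The reduction to ``$\psi(2)$ has order $2^s$'' uses only that a cyclic $2$-group has a unique maximal subgroup, and the crux is sound: $\mathrm{ord}(\psi_p)=2^s$ forces $2^s\mid p-1$ because $(\bZ/p^{a_p}\bZ)^\times$ is cyclic of order $p^{a_p-1}(p-1)$, so for $s\ge 3$ every prime in the support of $\eta=\psi^{2^{s-1}}$ is $\equiv 1\pmod 8$, whence $\eta(2)=\prod_{p\in S}\left(\tfrac{2}{p}\right)=+1$, contradicting $\eta(2)=-1$. Beyond being complete and elementary (Kronecker--Weber plus the second supplement, no class field theory), your argument has the virtue of isolating exactly where the two hypotheses enter: $s\ge 3$ upgrades $p\equiv 1\pmod 4$ to $p\equiv 1\pmod 8$ (consistent with $2$ being inert in the cyclic quartic field $\bQ(\zeta_5)$), and the prime $2$ is special because the second supplement --- equivalently $\sqrt{2}\in\bQ(\zeta_8)$ --- has no analogue for odd primes, which is precisely the source of the ``special case'' in Grunwald--Wang. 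One caution: the alternative local route sketched in your final paragraph (completing at $2$, forcing the quadratic subfield to be locally $\bQ_2(\sqrt{5})$, then ``transporting back'') is not actually easier; the transport step requires essentially the same global character analysis, so it should be presented as a remark, not as an equivalent proof.
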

For example, when $K / \mathbb Q_2$ is the unramified cyclic extension of order $8$, there exists no cyclic extension $K'/\mathbb Q$ of degree $8$ that localizes to $K$.

\subsection{Number fields and $p$-adic fields in \texttt{LMFDB}}
\label{subsec:prelim_lmfdb}

\texttt{LMFDB} \cite{lmfdb} is an online database that contains a wide range of mathematical objects in number theory and arithmetic geometry, including $L$-functions, modular forms, elliptic curves, number fields, $p$-adic fields, and representations.
In this work, we used the number field database for machine learning experiments and the local field database for the proofs of some theorems.

The number field database in \texttt{LMFDB} is a compilation of several works including \cite{kluners2001database,jones2014database,buchmann1993enumeration,oliver1992computation,voight2008enumeration,battistoni2020small}, where the detailed references can be found in the \emph{Source and Acknowledgements} page for the database \cite{lmfdb-nf-source}.
% \href{https://www.lmfdb.org/NumberField/Source}{Source and Acknowledgments} page.
Since our focus is on Galois groups, we restrict our experiments to Galois extensions of $\mathbb Q$ of degrees $4,6,8,9$ and $10$, among all number fields in the database. The corresponding statistics are summarized in Table~\ref{tab:nf_count_galois}.

For a given number field, there are infinitely many choices of defining polynomials. 
\texttt{LMFDB} uses the \emph{normalized (reduced)} polynomial where the sum of the squares of the absolute values of all complex roots of a polynomial is minimized.
 If there is more than one such polynomial, choose the polynomials whose discriminant have minimal absolute value.
If there are still multiple polynomials, say $P = x^n + a_1 x^{n-1} + a_2 x^{n-2} + \cdots + a_n$, choose the one where the vector $S(P) := (|a_1|, a_1, \dots, |a_n|, a_n)$ has smallest lexicographic order (see \cite{lmfdb-nf-poly} for more details).
Note that these features (sum of squares of zeros, coefficients, etc.) are not field invariants, and the models trained on such data highly depends on such normalizations.
See Section \ref{subsubsec:49_poly}, \ref{sec:sextic_poly}, and \emph{Polynomial coefficients} paragraphs of \ref{subsubsec:octic_ab}, and \ref{subsubsec:octic_nab} for the experiments.

The \( p \)-adic field database in \texttt{LMFDB} is primarily based on the computations of Jones and Roberts \cite{jones2004nonic, roberts2006database, jones2008octic}, along with several other sources for specific data columns.  
Key columns include the ramification degree, inertia degree, Galois group, (wild) inertia group, and Galois splitting model \cite{carrillo2024finding}.
As of now, the database contains all degree \( n \) extensions of \( \mathbb{Q}_p \) for \( p < 200 \) and \( n \le 23 \); see the  \emph{completeness} page for $p$-adic fields \cite{lmfdb-completeness}.

\begin{table}[h]
    \begin{center}
        \begin{tabular}{c|c|c|c}
            \toprule
            Degree & Galois group & \multicolumn{2}{c}{Number of fields} \\
            \midrule
            \multirow{2}{*}{4} & $C_4$ (\texttt{4T1}) & 21873 (11.96\%) & \multirow{2}{*}{182860} \\
            & $C_2^2$ (\texttt{4T2}) & 160987 (88.04\%) \\
            \midrule
            \multirow{2}{*}{9} & $C_9$ (\texttt{9T1}) & 284 (22.43\%) & \multirow{2}{*}{1266} \\
            & $C_3^2$ (\texttt{9T2}) & 982 (77.57\%) \\
            \midrule
            \midrule
            \multirow{2}{*}{6} & $C_6$ (\texttt{6T1}) & 21146 (13.45\%) & \multirow{2}{*}{157168} \\
            & $S_3$ (\texttt{6T2}) & 136022 (86.55\%) \\
            \midrule
            \multirow{2}{*}{10} & $C_{10}$ (\texttt{10T1}) & 1854 (33.86\%) & \multirow{2}{*}{5476} \\
            & $D_5$ (\texttt{10T2}) & 3622 (66.14\%) \\
            \midrule
            \midrule
            \multirow{5}{*}{8} & $C_8$ (\texttt{8T1}) & 6206 (5.42\%) & \multirow{5}{*}{114576} \\
            & $C_4 \times C_2$ (\texttt{8T2}) & 19550 (17.06\%) \\
            & $C_2^3$ (\texttt{8T3}) & 10767 (9.40\%) \\
            & $D_4$ (\texttt{8T4}) & 27796 (24.26\%) \\
            & $Q_8$ (\texttt{8T5}) & 50257 (43.86\%) \\
            \bottomrule
        \end{tabular}
        \caption{Galois extensions of degree $4, 6, 8, 9, 10$ in \texttt{LMFDB} \cite{lmfdb}}
        \label{tab:nf_count_galois}
    \end{center}
\end{table}

%%%%%%%%%% GALOIS %%%%%%%%%

\section{Galois groups of number fields}
\label{sec:galois}

In this main section, we investigate how a Galois group can be determined from a finite number of coefficients of the Dedekind zeta function and also from polynomial coefficients. This question is motivated and guided by machine learning (ML) experiments, in which these coefficients are used as features of the dataset. The details of the ML experiments are presented alongside theorems and proofs. While Galois groups can be determined by various classical methods, it is somewhat surprising that this particular approach of using zeta coefficients has not been explored in the existing literature. It is also worth noting that our criteria could only be formulated after analyzing the outcomes of the ML experiments.

\subsection{Experimental details}

As previously mentioned, we used the number field database from the \texttt{LMFDB} \cite{lmfdb}.
 The zeta coefficients are separately computed using the function \texttt{zeta\_coefficients} of \texttt{SageMath} \cite{sagemath} to make dataset for the experiments.
For each experiment, we report average and standard deviation of accuracy, balanced accuracy, and per-class precision, recall, F1 over 5 different random splits of ratio 80 to 20 for training and test sets.
% the data was randomly split into training and test sets in an 80-20 ratio.
All machine learning experiments were conducted using the  default setup of the \texttt{scikit-learn} library \cite{pedregosa2011scikit}, with fixed random state and no hyperparameter tuning except for setting the maximum number of iterations in logistic regression,  which is set to be 10000.

For logistic regression models, they are trained with LBFGS solver with $L^2$-regularization.
For decision trees, we trained them with Gini criterion and we did not set the maximum depths, so that the nodes are expanded until all leaves are pure.
We also use \texttt{best} splitter, which uses midpoints of features values as thresholds.
See the official \texttt{scikit-learn} documentations \cite{sklearn-lr,sklearn-dt} for more details about default configurations.

\subsection{From experiments to theorems}
\label{subsec:pipeline}

We briefly describe the overall workflow underlying our machine learning experiments and the subsequent process of formulating and proving theorems.

In the decision tree experiments involving zeta coefficients, we observe that coefficients indexed by perfect powers consistently appear near the top of the trees. In the cases of quartic and nonic extensions, the resulting decision trees (Figures~\ref{fig:quartic_zeta_dt} and \ref{fig:nonic_zeta_dt}) are remarkably simple. This allows us to readily conjecture that the vanishing of certain zeta coefficients characterizes cyclic extensions. We are able to prove both this conjecture \emph{and} its converse (see Corollary~\ref{cor:ellsqzeta}).

For sextic, octic, and decic extensions, the decision trees become more intricate; nevertheless, the prominence of coefficients with perfect power indices remains evident. Motivated by this pattern, we conduct further exploratory data analysis (EDA), which reveals that specific values of zeta coefficients at prime-power indices occur only when the extension has a particular Galois group and when the corresponding prime satisfies certain congruence conditions. We subsequently establish these phenomena rigorously using Propositions~\ref{prop:newton}, \ref{prop:tamefrobaction}, and Theorem~\ref{thm:wangcounterex}, together with additional group-theoretic arguments.

\subsection{Quartic and nonic fields}
\label{subsec:galois_quartic_nonic}

Let $K/\bQ$ be a Galois extension of degree 4.
There are two possible Galois groups: cyclic group $C_{4} = \bZ / 4$ or the Klein group $V_{4} = C_2^2 = \bZ / 2 \times \bZ / 2$.
It is completely known how to determine the Galois group of a splitting field of a quartic polynomial $f(x) \in \bQ[x]$ in terms of discriminants and resolvents \cite[p. 613]{dummit2004abstract}.
Especially, for a quartic Galois extension $K$, one has
\begin{equation}
    \label{eqn:quartic_disc}
    \Gal(K / \bQ) \simeq \begin{cases}
        V_4 & \text{ if } \Delta_K \text{ is a perfect square, } \\
        C_4 & \text{ otherwise. } 
    \end{cases}
\end{equation}
Similarly, if $K / \bQ$ is a degree 9 Galois extension, then there are only two possible Galois groups: cyclic group $C_9 = \bZ / 9$ or $C_3^2 = \bZ / 3 \times \bZ / 3$.
In general, only possible Galois groups for degree $\ell^2$-Galois-extensions for prime $\ell$ are $C_{\ell^2}$ or $C_\ell^2$, since those are the only groups of order $\ell^2$ up to isomorphism. In the rest of this subsection, we will use zeta and polynomial coefficients, respectively, to distinguish between the two cases.

\subsubsection{Learning Galois groups from zeta coefficients}

\paragraph{Decision tree}

\begin{figure}[t]
    \centering
    \footnotesize
    \begin{forest}
      label L/.style={
        edge label={node[midway,left,font=\scriptsize]{#1}}
      },
      label R/.style={
        edge label={node[midway,right,font=\scriptsize]{#1}}
      },
      for tree={
        font=\normalsize,
        forked edge,
        child anchor=north,
        for descendants={
          {edge=->}
        }
      },
      [{$a_{2^2 \cdot 3^2 \cdot 5^2} = 0$}, draw
        [{$C_{4}$}, rectangle, thick, draw, label L=Y, tier=bottom, line width=1.5pt]
        [{$a_{23^2} = 0$}, draw, label R=N, edge=very thick,
            [{$C_{4}$}, rectangle, thick, draw, label L=Y, tier=bottom, line width=1.5pt]
            [{$a_{19^2} = 0$}, draw, label R=N, edge=very thick,
                [{$C_{4}$}, rectangle, thick, draw, label L=Y, tier=bottom, line width=1.5pt]
                [{$a_{31^2} = 0$}, draw, label R=N, edge=very thick,
                    [{$C_{4}$}, rectangle, thick, draw, label L=Y, tier=bottom, line width=1.5pt]
                    [{$a_{2^2 \cdot 7^2} = 0$}, draw, label R=N, edge=very thick,
                        [{$C_{4}$}, rectangle, thick, draw, label L=Y, tier=bottom, line width=1.5pt]
                        [{$a_{11^2} = 0$}, draw, label R=N, edge=very thick,
                            [{$C_{4}$}, rectangle, thick, draw, label L=Y, tier=bottom, line width=1.5pt]
                            [{$a_{2^2 \cdot 3^2 \cdot 5^2} \le 1$}, draw, label R=N, edge=very thick,
                                [{$a_{2^7 \cdot 5} = 0$}, draw, label L=Y, edge=very thick
                                    [{$V_{4}$}, rectangle, thick, draw, label L=Y, tier=bottom, line width=1.5pt]
                                    [{$C_{4}$}, rectangle, thick, draw, label R=N, tier=bottom, line width=1.5pt, edge=very thick, fill=gray!40]
                                ]
                                [{$V_{4}$}, rectangle, thick, draw, label R=N, tier=bottom, line width=1.5pt]
                            ]
                        ]
                    ]
                ]
            ]
        ]
      ]
    \end{forest}
    \caption{A decision tree predicts the Galois groups of quartic fields from zeta coefficients up to $a_{1000}(K)$.}
    \label{fig:quartic_zeta_dt}
\end{figure}

We found that decision tree models can achieve  almost perfect accuracy (100\% on 4 out of 5 runs) for distinguishing Galois groups of quartic fields using zeta coefficients $a_n(K)$ up to $n \le 1000$.
Moreover, the splitting of the first few decision nodes (including the root node) have a form of $a_{m^2}(K)  = 0$\footnote{ In the actual experiments, the conditions are of the form $a_{m^2}(K) \le 0.5$, since we are using the \texttt{best} splitter which chooses midpoints of feature values as thresholds \cite{sklearn-dt2}.}, i.e. whether $a_{m^2}(K)$ is zero or not, and predict as a cyclic extension when it is true (Figure \ref{fig:quartic_zeta_dt}).
This strongly suggests the  criterion: for a quartic Galois extension $K / \bQ$, we have $\Gal(K/\bQ) \simeq C_4$  if  $a_{m^2}(K) = 0$ for some $m \ge 2$.

\begin{figure}[t]
    \centering
    \footnotesize
    \begin{forest}
      label L/.style={
        edge label={node[midway,left,font=\scriptsize]{#1}}
      },
      label R/.style={
        edge label={node[midway,right,font=\scriptsize]{#1}}
      },
      for tree={
        font=\normalsize,
        forked edge,
        child anchor=north,
        for descendants={
          {edge=->}
        }
      },
      [{$a_{2^3 \cdot 5^3} \le 4$}, draw,
        [{$C_{9}$}, rectangle, thick, draw, label L=Y, tier=bottom, line width=1.5pt, edge=very thick, fill=gray!40]
        [{$a_{7^3} = 0$}, draw, label R=N, edge={very thick, densely dashed}
            [{$C_{9}$}, rectangle, thick, draw, label L=Y, tier=bottom, line width=1.5pt]
            [{$a_{3^3} = 0$}, draw, label R=N, edge={very thick, densely dashed}
                [{$C_{9}$}, rectangle, thick, draw, label L=Y, tier=bottom, line width=1.5pt]
                [{$C_{3}^{2}$}, rectangle, thick, draw, label R=N, tier=bottom, line width=1.5pt, edge={very thick, densely dashed}, fill=gray!20]
            ]
        ]
      ]
    \end{forest}
    \caption{A decision tree predicts the Galois groups of nonic fields from zeta coefficients up to $a_{1000}(K)$.}
    \label{fig:nonic_zeta_dt}
\end{figure}

Similarly, for degree 9 Galois extensions of $\bQ$, decision tree models can achieve 100\% accuracy on distinguishing Galois groups ($C_9$ or $C_3^2$) with zeta coefficients $a_n(K)$ up to $n \le 1000$.
More surprisingly, Figure \ref{fig:nonic_zeta_dt} shows that the model uses only three zeta coefficients $a_{1000}, a_{343}$ and $a_{27}$ out of 1000, and it strongly suggests that the prediction is based on the cube-indexed zeta coefficients.
From these observations, it is natural to ask if one can distinguish two possible Galois groups of degree $\ell^2$ Galois extensions of $\bQ$, using $\ell$-power coefficients.
 In particular, it is natural to conjecture the following: let $K / \bQ$ be a Galois extension of degree $\ell^2$. If $a_{p^\ell}(K) = 0$ for some prime $p$, then $K / \bQ$ is a cyclic extension.

\begin{table}[h]
    \begin{center}
        \begin{tabular}{c|c|c|c}
            \toprule
            $(e, f, g)$ & Euler factor & $a_{p}(K)$ & $a_{p^\ell}(K)$ \\
            \midrule
            $(1, 1, \ell^2)$ & $(1 - p^{-s})^{-\ell^2} = \sum_{k \ge 0} \binom{\ell^2 + k - 1}{\ell^2 - 1}p^{-ks}$ & $\ell^2$ & $\binom{\ell^2 + \ell - 1}{\ell^2 - 1}$ \\
            $(1, \ell, \ell)$ & $(1 - p^{-\ell s})^{-\ell} = \sum_{k \ge 0} \binom{\ell + k - 1}{\ell - 1} p^{-\ell k s}$ & $0$ & $\ell$ \\
            $(1, \ell^2, 1)$ & $(1 - p^{-\ell^2 s})^{-1} = \sum_{k \ge 0}p^{-\ell^2 k s}$ & $0$ & $0$ \\
            $(\ell, 1, \ell)$ & $(1 - p^{-s})^{-\ell} = \sum_{k \ge 0} \binom{\ell + k - 1}{\ell - 1} p^{-ks}$ & $\ell$ & $\binom{2\ell - 1}{\ell - 1}$ \\
            $(\ell, \ell, 1)$ & $(1 - p^{-\ell s})^{-1} = \sum_{k \ge 0} p^{-\ell k s}$ & $0$ & $1$ \\
            $(\ell^2, 1, 1)$ & $(1 - p^{-s})^{-1} = \sum_{k \ge 0} p^{-ks}$ & $1$ & $1$ \\
            \bottomrule
        \end{tabular}
        \caption{Decomposition types, Euler factors, and $a_p(K)$ and $a_{p^{\ell}}(K)$ for degree $\ell^2$ extensions.}
        \label{tab:lsqeuler}
    \end{center}
\end{table}

In Theorem \ref{thm:ClClextn}, we show that the conjecture is true. More precisely,

using Proposition \ref{prop:dedekind_euler} and Corollary \ref{cor:prime_pow_zeta_coeff}, one can list all the possible decomposition types of a given prime $p$ in $K$ and the corresponding Euler factors and $p^\ell$-th zeta coefficients, as shown in Table \ref{tab:lsqeuler}.
By proving that certain decomposition types cannot occur for $C_\ell^2$-extensions, we show that $p^\ell$-th zeta coefficients are nonvanishing for such extensions.

\begin{theorem}
    \label{thm:ClClextn}
    Let $\ell$ be a prime and $K / \bQ$ be a Galois extension with $\Gal(K / \bQ) \simeq C_\ell^2$.
    Then $a_{p^\ell}(K) \ne 0$ for all prime $p$.
\end{theorem}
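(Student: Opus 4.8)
The plan is to translate the nonvanishing statement into a divisibility condition on the inertia degree, and then to rule out the single ``bad'' decomposition type using the group structure of $C_\ell^2$. Since $K/\bQ$ is Galois, Proposition \ref{prop:dedekind_euler} attaches to each prime $p$ a well-defined triple $(e,f,g)$ with $efg = [K:\bQ] = \ell^2$, and Corollary \ref{cor:prime_pow_zeta_coeff} then gives $a_{p^\ell}(K) = \binom{g + \ell/f - 1}{g-1}$ when $f \mid \ell$ and $a_{p^\ell}(K) = 0$ otherwise. The binomial coefficient is a strictly positive integer whenever $f \mid \ell$ (its top index $g + \ell/f - 1$ is at least the bottom index $g-1$), so the entire theorem reduces to proving that $f \mid \ell$ for every prime $p$.

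First I would observe that, because $\ell$ is prime and $f \mid efg = \ell^2$, the only possibilities are $f \in \{1, \ell, \ell^2\}$; the first two divide $\ell$ whereas $f = \ell^2$ does not. Consequently $a_{p^\ell}(K)$ can vanish only in the decomposition type $(e,f,g) = (1, \ell^2, 1)$ of Table \ref{tab:lsqeuler}, that is, precisely when $p$ is unramified and inert in $K$. So the whole problem collapses to excluding the inert case.

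The heart of the argument is this exclusion. If $f = \ell^2$, then $e = g = 1$, so $p$ is unramified and the decomposition group $D_\frp$ of any prime $\frp \mid p$ equals the full Galois group $G = \Gal(K/\bQ) \simeq C_\ell^2$. But for an unramified prime the inertia group is trivial and $D_\frp$ is cyclic, being generated by the Frobenius element. This would force $C_\ell^2$ to be cyclic of order $\ell^2$, contradicting the fact that $C_\ell^2 = \bZ/\ell \times \bZ/\ell$ has exponent $\ell$ and hence no element of order $\ell^2$. Therefore $f \ne \ell^2$, so $f \in \{1, \ell\}$, giving $f \mid \ell$ and $a_{p^\ell}(K) \ne 0$.

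I do not anticipate a genuine obstacle here: once the statement is recast in the language of decomposition types, the only mathematical content is the elementary observation that $C_\ell^2$ admits no cyclic subgroup of order $\ell^2$, equivalently that it cannot arise as the cyclic decomposition group of an unramified prime. The single point requiring care is verifying that $f = \ell^2$ is the \emph{unique} decomposition type producing a vanishing coefficient, which is immediate from $f \mid \ell^2$ together with the primality of $\ell$.
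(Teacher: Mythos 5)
Your proposal is correct and follows essentially the same route as the paper: reduce via the Euler factor computation (Table \ref{tab:lsqeuler}) to excluding the inert type $(e,f,g)=(1,\ell^2,1)$, then observe that this would make the decomposition group $D_\frp \simeq G$ cyclic of order $\ell^2$ (being $D_\frp/I_\frp \simeq \Gal(\bF_{p^{\ell^2}}/\bF_p)$ with trivial inertia), contradicting the fact that $C_\ell^2$ has exponent $\ell$. The only difference is that you spell out explicitly why $(1,\ell^2,1)$ is the unique vanishing type, which the paper leaves implicit in its reference to the table.
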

\begin{proof}
     The last column of Table \ref{tab:lsqeuler} shows that $a_{p^\ell}(K) = 0$ if and only if the decomposition type of $p$ is $(e, f, g) = (1, \ell^2, 1)$, i.e. totally inerts in $K$. Hence, it is enough to show that such decomposition type cannot occur for $C_{\ell}^2$-extensions.
    If it happens, we should have $I_p = 1$ and $D_p = C_\ell^2$, but  \[ D_p \simeq D_p / I_p \simeq \Gal(\mathbb{F}_{p^{\ell^2}} / \mathbb{F}_p) \simeq C_{\ell^2},\] which is a contradiction.
\end{proof}

In contrast, there are infinitely many primes $p$ for which $a_{p^\ell}(K) = 0$ when $K$ is a cyclic extension of degree $\ell^2$.
We can prove the following general statement for all cyclic extensions of prime-power degree,  which implies that the converse of Theorem \ref{thm:ClClextn} is also true.
\begin{theorem}
    \label{thm:cyclic_degree_lpower}
    Let $\ell$ be a prime and $K / \bQ$ be a cyclic extension of degree $\ell^m$ for some $m \ge 2$, i.e. $G = \Gal(K / \bQ) \simeq C_{\ell^m}$.
     Then, for any $b \ge 1$ not divisible by $\ell^m$, the natural density of primes $p$ such that $a_{p^b}(K) = 0$ is positive. More precisely, if $\nu = v_\ell(b) < m$ is the $\ell$-adic valuation of $b$, then the density of primes with $a_{p^b}(K) = 0$ is $(\ell^{m - \nu} - 1) / \ell^{m - \nu}$.
    
    % Then the density of primes $p$ such that $a_{p^b}(K) = 0$ for any $b \in \mathbb Z_{\ge 1}$ not divisible by $\ell^m$ is positive.
\end{theorem}
\begin{proof}
    
    Consider unramified prime $p$ of decomposition type $(e, f, g) = (1, \ell^{t}, \ell^{m-t})$.
    The corresponding Euler factor is 
    \[
        \zeta_{K, p}(s) = (1 - p^{-\ell^{t}s})^{-\ell^{m - t}} = \sum_{k \ge 0} \binom{\ell^{m-t} + k - 1}{\ell^{m-t} - 1} p^{-\ell^{t}k s}
    \]
    and this shows $a_{p^b}(K) = 0$ if and only if $t > \nu$, i.e. $\ell^t \nmid b$.
    For each $t > \nu$, $\mathrm{Frob}_p$ has order $\ell^{t}$ and there are $\varphi(\ell^t) = \ell^{t - 1}(\ell - 1)$-many elements in $C_{\ell^m}$ of order $\ell^t$, so the density of prime $p$ with $a_{p^b}(K) = 0$ is
    \[
    \sum_{\nu < j \le m} \frac{\ell^{j-1}(\ell - 1)}{\ell^m} = \frac{\ell - 1}{\ell^m} \frac{\ell^{\nu}(\ell^{m - \nu} - 1)}{\ell - 1} = \frac{\ell^{m - \nu} - 1}{\ell^{m -\nu}}.
    \]
    Note that the natural density and Dirichlet density coincides in Chebotarev density theorem.
    
\end{proof}

 By combining two theorems, we obtain the following result suggested by the decision tree experiment.
\begin{corollary}
\label{cor:ellsqzeta}
    Let $\ell$ be a prime and $K / \bQ$ be a Galois extension of degree $\ell^2$, hence $\Gal(K/\bQ) \simeq C_{\ell^2}$ or $C_{\ell}^2$.
    Then $\Gal(K / \bQ) \simeq C_{\ell^2}$ if and only if there exists a prime $p$ such that $a_{p^\ell}(K) = 0$.
\end{corollary}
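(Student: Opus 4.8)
The plan is to observe that the corollary is a direct combination of the two theorems just proved, since by the classification of groups of order $\ell^2$ there are exactly two possibilities for $\Gal(K/\bQ)$, namely $C_{\ell^2}$ and $C_\ell^2$. The statement is an ``if and only if,'' so I would handle the two implications separately, each reducing to one of the theorems.

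For the ``only if'' direction, suppose $\Gal(K/\bQ) \simeq C_{\ell^2}$. This is a cyclic extension of prime-power degree $\ell^m$ with $m = 2$, so Theorem \ref{thm:cyclic_degree_lpower} applies. The one thing to check is that the exponent $b = \ell$ satisfies the hypothesis $\ell^m \nmid b$; here $\ell^m = \ell^2$, and indeed $\ell^2 \nmid \ell$ since $\ell$ is a prime (so $\ell \geq 2$). The theorem then guarantees that the set of primes $p$ with $a_{p^\ell}(K) = 0$ has positive density, and in particular is nonempty, producing the desired prime $p$.

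For the ``if'' direction, I would argue by contraposition. If $\Gal(K/\bQ) \not\simeq C_{\ell^2}$, then $\Gal(K/\bQ) \simeq C_\ell^2$, and Theorem \ref{thm:ClClextn} gives $a_{p^\ell}(K) \ne 0$ for \emph{every} prime $p$. Hence the existence of a single prime $p$ with $a_{p^\ell}(K) = 0$ forces $\Gal(K/\bQ) \simeq C_{\ell^2}$.

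There is essentially no obstacle here, as all the substantive work was carried out in Theorems \ref{thm:ClClextn} and \ref{thm:cyclic_degree_lpower}: one supplies the ``every prime'' nonvanishing for $C_\ell^2$ and the other supplies a positive-density (hence nonempty) family of vanishing primes for $C_{\ell^2}$. The only point requiring a moment's attention is the trivial divisibility check $\ell^2 \nmid \ell$ needed to invoke Theorem \ref{thm:cyclic_degree_lpower} with $b = \ell$; everything else is immediate from the dichotomy of possible Galois groups.
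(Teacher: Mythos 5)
Your proposal is correct and matches the paper's approach: the paper states this corollary immediately after Theorems \ref{thm:ClClextn} and \ref{thm:cyclic_degree_lpower} as their direct consequence, with exactly the combination you describe (nonvanishing at every prime for $C_\ell^2$ via Theorem \ref{thm:ClClextn}, and a positive-density, hence nonempty, set of primes with $a_{p^\ell}(K)=0$ for $C_{\ell^2}$ via Theorem \ref{thm:cyclic_degree_lpower} with $m=2$, $b=\ell$). Your divisibility check $\ell^2 \nmid \ell$ and the contrapositive handling of the ``if'' direction are both sound.
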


In fact, for each Galois group and prime, we can analyze the possible decomposition types, as will be discussed below.

\begin{proposition}
    Let $K / \bQ$ be a quartic $C_4$-extension and $p \equiv 3 \pmod{4}$ be a rational prime with decomposition type $(e, f, g)$ in $K$.
    Then $(e, f, g) \ne (4, 1, 1)$.
\end{proposition}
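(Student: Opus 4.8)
The plan is to argue by contradiction, reducing to the local extension at $p$ and exploiting the fact that $p \equiv 3 \pmod 4$ forces the ramification to be tame. The point is that a totally ramified $C_4$-extension at an odd prime is tame, and tameness imposes a congruence on $p$ that is incompatible with $p \equiv 3 \pmod 4$.

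First I would unwind what the type $(e,f,g) = (4,1,1)$ means. It says $p$ is totally ramified, so there is a single prime $\frp$ of $\cO_K$ above $p$ with ramification index $4$ and inertia degree $1$. The decomposition group $D_p = \Gal(K_\frp / \bQ_p)$ then has order $ef = 4$, so $D_p = G \simeq C_4$, and since $f = 1$ the inertia group coincides with it, $I_p = D_p \simeq C_4$. Thus the completion $K_\frp / \bQ_p$ is a totally ramified abelian (cyclic) extension of degree $4$ with residue field $\bF_p$ at both top and bottom.

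Next I would invoke tameness. Since $p \equiv 3 \pmod 4$, the prime $p$ is odd, so $p \nmid 4 = e$ and the extension $K_\frp/\bQ_p$ is tamely ramified, i.e. $G_1 = 1$. This places us exactly in the hypotheses of Proposition \ref{prop:newton}: a tamely ramified abelian extension of local fields with ramification degree $e = 4$ over the base $\bQ_p$, whose residue field $k = \bF_p$ satisfies $\#k^\times = p - 1$. The proposition then yields $e \mid \#k^\times$, that is $4 \mid p - 1$, equivalently $p \equiv 1 \pmod 4$. This contradicts $p \equiv 3 \pmod 4$, so the type $(4,1,1)$ cannot occur. (Since $f = 1$ here, one does not even need the abelian refinement: the tame-inertia embedding $G_0 \hookrightarrow \ell^\times$ recalled in Section \ref{subsec:prelim_tameram}, with $\ell = \bF_p$, already gives $4 = |I_p| \mid p - 1$.)

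I do not expect a serious obstacle; the argument is short, and the only point requiring care is verifying the hypotheses of Proposition \ref{prop:newton} — that $K_\frp/\bQ_p$ is genuinely abelian (immediate, since $D_p \le C_4$ is cyclic) and tamely ramified (immediate from $p$ odd). As an alternative packaging one could use Proposition \ref{prop:tamefrobaction}: in the abelian group $C_4$ a lift $\sigma$ of Frobenius commutes with a generator $\tau$ of the inertia group, so $\tau = \sigma\tau\sigma^{-1} = \tau^p$, forcing $\tau^{p-1} = 1$ and hence $e = \mathrm{ord}(\tau) = 4 \mid p - 1$; but citing Proposition \ref{prop:newton} directly is the cleanest route.
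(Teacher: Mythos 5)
Your proof is correct and follows essentially the same route as the paper's: assume the type $(4,1,1)$, note that $p$ odd forces the local extension $K_\frp/\bQ_p$ to be tamely ramified, and invoke Proposition~\ref{prop:newton} to conclude $4 \mid p-1$, contradicting $p \equiv 3 \pmod{4}$. The extra verifications you spell out (that the extension is abelian and totally ramified) and the alternative packaging via Proposition~\ref{prop:tamefrobaction} are fine but not needed beyond what the paper's two-line argument already contains.
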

\begin{proof}
    Assume $(e, f, g) = (4, 1, 1)$.
    Since $(p, 4) = 1$, the corresponding extension $K_\frp / \bQ_p$ (for some $\frp|p$) is a tamely ramified extension, hence we should have $4 \mid p - 1$ by Proposition \ref{prop:newton}.
    This cannot happen since $p \equiv 3 \pmod{4}$.
\end{proof}

\begin{proposition}
    \label{prop:C2C2ram}
    Let $K / \bQ$ be a quartic $C_2^2$-extension and $p$ be a rational prime with decomposition type $(e, f, g)$ in $K$. Then:
    \begin{enumerate}
        \item $(e, f, g) \ne (1, 4, 1)$.
        \item If $p \ne 2$, then $(e, f, g) \ne (4, 1, 1)$.
    \end{enumerate}
\end{proposition}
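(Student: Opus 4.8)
The plan is to reduce both assertions to the single group-theoretic fact that $C_2^2$ is not cyclic, applied to two different groups naturally attached to the decomposition of $p$. Since $K/\bQ$ is abelian, for each prime $p$ the decomposition and inertia groups $D_p, I_p \le G = C_2^2$ are well defined (as recalled in Section \ref{subsec:prelim_tameram}), and the decomposition type records their orders via $|D_p| = ef$ and $|I_p| = e$. The argument will closely follow the template of Theorem \ref{thm:ClClextn} for part (1) and invoke the cyclicity of tame inertia for part (2).

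For part (1), I would note that the type $(e,f,g) = (1,4,1)$ forces $|D_p| = 4$ and $|I_p| = 1$, hence $D_p = G = C_2^2$ and $I_p = \{1\}$. Exactly as in the proof of Theorem \ref{thm:ClClextn}, the residue-field extension identifies the quotient as
\[
    D_p / I_p \simeq \Gal(\mathbb{F}_{p^f}/\mathbb{F}_p) \simeq C_f = C_4,
\]
which is cyclic; but $D_p / I_p \simeq C_2^2$ is not cyclic, a contradiction.

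For part (2), the type $(4,1,1)$ forces $|I_p| = e = 4$, so $I_p = G = C_2^2$. When $p \ne 2$ we have $\gcd(p,e) = \gcd(p,4) = 1$, so each local extension $K_\frp/\bQ_p$ is tamely ramified; the structure theory recalled in Section \ref{subsec:prelim_tameram} (the embedding $G_0 \hookrightarrow \ell^\times$ into a cyclic group) then forces the inertia group to be cyclic, contradicting $I_p \simeq C_2^2$. I expect neither step to pose a genuine obstacle; the only point requiring care is selecting the correct structural input in each case—the cyclicity of the residue quotient $D_p / I_p$ versus the cyclicity of the tame inertia group $I_p$—rather than Proposition \ref{prop:newton}, which would only exclude $p \equiv 3 \pmod 4$ and not all odd $p$. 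The hypothesis $p \ne 2$ is genuinely necessary in part (2): at $p = 2$ the type $(4,1,1)$ does occur, for instance in $\bQ(\zeta_8)$, where $2$ is totally (wildly) ramified with non-cyclic inertia $C_2^2$.
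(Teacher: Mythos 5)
Your proof is correct and follows essentially the same route as the paper's: part (1) is the observation that $(1,4,1)$ would make $C_4 \simeq D_p/I_p$ a quotient of $C_2^2$, and part (2) is exactly the paper's argument that for odd $p$ the extension is tamely ramified, so the inertia group is cyclic and cannot equal $C_2^2$. Your added remarks—that Proposition~\ref{prop:newton} alone would only exclude $p \equiv 3 \pmod 4$, and that $\bQ(\zeta_8)$ shows the hypothesis $p \ne 2$ is necessary—are accurate refinements beyond what the paper records, but the core argument is the same.
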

\begin{proof}
    1) Since $C_4$ cannot be a quotient of $C_2^2$, we have $(e, f, g) \ne (1, 4, 1)$.
    
    2) For odd $p$, the corresponding extension $K_\frp / \bQ_p$ is tamely ramified and the inertia group has to be cyclic, so it cannot be totally ramified.
\end{proof}

\begin{example}[$C_4$, sufficient condition]
    Assume that a quartic Galois extension \( K / \mathbb{Q} \) follows the bold branch in Figure~\ref{fig:quartic_zeta_dt}, so that the decision tree predicts the Galois group to be \( C_4 \).   In this case, we have
\[
a_{30^2} = a_{2^2} \cdot a_{3^2} \cdot a_{5^2} = 1 \quad \Leftrightarrow \quad a_{2^2} = a_{3^2} = a_{5^2} = 1,
\]
and
\[
a_{640} = a_{2^7} \cdot a_5 >  0 \quad \Rightarrow \quad a_5 \ne 0.
\]
 From Table~\ref{tab:lsqeuler}, we can check that the only possible decomposition type of $p = 5$ satisfying $a_{5} \ne 0$ and $a_{5^2} = 1$ is $(e, f, g) = (4, 1, 1)$, i.e. totally ramified.
By Proposition~\ref{prop:C2C2ram}, \( K / \mathbb{Q} \) cannot be a \( C_2^2\)-extension, and thus must be cyclic.  
    This shows that the branch always gives correct prediction.
\end{example}

A similar analysis can be carried out for nonic extensions.

\begin{proposition}
    \label{prop:C9ram}
    Let $K / \bQ$ be a nonic $C_9$-extension and let $p \ne 3$ be a rational prime with decomposition type $(e, f, g)$ in $K$. Then: 
    \begin{enumerate}
        \item If $p \equiv 4, 7 \pmod{9}$, then $(e, f, g) \ne (9, 1, 1)$.
        \item If $p \equiv 2 \pmod{3}$, then $(e, f, g) \not \in \{(3, 1, 3), (3, 3, 1), (9, 1, 1)\}$.
    \end{enumerate}
\end{proposition}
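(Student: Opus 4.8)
The plan is to reduce both claims to the single divisibility constraint $e \mid p - 1$ coming from tame ramification, exactly as in the quartic analysis of Proposition \ref{prop:C2C2ram} but tracking residues modulo $9$ rather than modulo $4$.

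First I would set up the local picture. Fix a prime $\frp$ of $K$ above $p$. Since $K/\bQ$ is Galois, the completion $K_\frp/\bQ_p$ is Galois with group the decomposition group $D_\frp \le G = C_9$; in particular $D_\frp$ is cyclic (hence abelian) of order $ef$, with $e = |I_\frp|$ and $g = 9/(ef)$. Because $e \mid |G| = 9 = 3^2$ and $p \ne 3$, we have $\gcd(p,e) = 1$, so $K_\frp/\bQ_p$ is tamely ramified. Proposition \ref{prop:newton} then applies to the tamely ramified abelian extension $K_\frp/\bQ_p$, whose base residue field is $\bF_p$, and yields
\[
    e \mid \#\bF_p^\times = p - 1.
\]

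With this single fact in hand, both parts follow as immediate contrapositives. For part 1, the type $(9,1,1)$ has $e = 9$, which would force $9 \mid p-1$, i.e. $p \equiv 1 \pmod 9$; this is impossible when $p \equiv 4,7 \pmod 9$. For part 2, observe that $p \equiv 2 \pmod 3$ gives $3 \nmid p - 1$. The types $(3,1,3)$ and $(3,3,1)$ both have $e = 3$, which would force $3 \mid p - 1$, while $(9,1,1)$ has $e = 9$, forcing $9 \mid p - 1$ and a fortiori $3 \mid p-1$; all three therefore contradict $3 \nmid p - 1$.

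There is no substantial obstacle here: the only point requiring care is verifying the two hypotheses of Proposition \ref{prop:newton}, namely that $K_\frp/\bQ_p$ is tamely ramified (guaranteed by $p \ne 3$ together with $e \mid 9$) and abelian (guaranteed by $D_\frp \le C_9$ being cyclic). To explain the precise shape of the statement, I would also note that $p \equiv 4,7 \pmod 9$ are exactly the nontrivial classes with $p \equiv 1 \pmod 3$, so part 1 obstructs only full ramification $(9,1,1)$ and leaves $(3,1,3)$, $(3,3,1)$ admissible (consistent with $3 \mid p-1$), whereas part 2 treats $p \equiv 2 \pmod 3$, where $e = 3$ is already forbidden and the trivial residue $p \equiv 1 \pmod 9$ is the only remaining class permitting $(9,1,1)$.
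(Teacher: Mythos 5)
Your proof is correct, and it takes a slightly but genuinely different route from the paper's. You apply Proposition \ref{prop:newton} once, uniformly, to the full local extension $K_\frp/\bQ_p$ --- which is abelian because $D_\frp \le C_9$ is cyclic and tamely ramified because $p \ne 3$ --- obtaining $e \mid p-1$ and dispatching all three decomposition types in one stroke. The paper instead treats Proposition \ref{prop:newton} as a statement about totally ramified pieces: for $(9,1,1)$ and $(3,1,3)$ it applies the proposition directly to the totally ramified extension $K_\frp/\bQ_p$, but for $(3,3,1)$ it passes to the maximal unramified subextension $\tilde K \subset K_\frp$ with residue field $\bF_{p^3}$ and applies the proposition to the totally and tamely ramified $C_3$-extension $K_\frp/\tilde K$, concluding only $3 \mid p^3 - 1$ (which still contradicts $p \equiv 2 \pmod 3$). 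Your version exploits the abelian hypothesis of Proposition \ref{prop:newton} in full strength and buys brevity and uniformity; the paper's tower argument is weaker per application but is exactly the technique that survives in the nonabelian settings later in the paper (e.g.\ Proposition \ref{prop:D5ram}, type $(5,2,1)$ for $D_5$-extensions, where the full local extension need not be abelian and one is forced to work over $\tilde K$, obtaining $e \mid p^f - 1$ rather than $e \mid p - 1$). Both arguments are complete; your observation that parts 1 and 2 reduce to the single divisibility $e \mid p-1$ is a clean way to package the result.
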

\begin{proof}
1)    Assume that $p \ne 3$ and $(e, f, g) = (9, 1, 1)$.
    Then the corresponding extension $K_\frp / \bQ_p$ (for $\frp|p$) is a totally and tamely ramified $C_9$-extension. By Proposition \ref{prop:newton} this implies $p \equiv 1 \pmod{9}$. Thus $p \not \equiv 4,7 \pmod 9$ and $p \not \equiv 2 \pmod 3$. In particular, this proves the first claim.

  2)  Now assume $p \equiv 2 \pmod{3}$.
    We already know $(e, f, g) \ne (9, 1, 1)$.
    Assume $(e, f, g) = (3, 1, 3)$. Then the corresponding extension $K_\frp / \bQ_p$ is a totally and tamely ramified $C_3$-extension.
    Then Proposition \ref{prop:newton} again shows $p \equiv 1 \pmod{3}$, a contradiction.
    For the case of $(3, 3, 1)$, the corresponding extension $K_\frp / \bQ_p$ has an unramified $C_3$-subextension $\tilde K \subset K_\frp$, where $K_\frp / \tilde K$ is totally and tamely ramified $C_3$-extension.
    Since the residue field of $\tilde K$ is $\bF_{p^3}$, Proposition \ref{prop:newton} implies that $p^3 \equiv 1 \pmod{3}$, which contradicts to $p \equiv 2 \pmod{3}$.
\end{proof}

\begin{proposition}
    \label{prop:C3C3ram}
    Let $K / \bQ$ be a nonic $C_3^2$-extension and let $p \ne 3$ be a rational prime with decomposition type $(e, f, g)$ in $K$. Then: 
    \begin{enumerate}
        \item If $p \equiv 1 \pmod{3}$, then $(e, f, g) \not \in \{(1, 9, 1), (9, 1, 1)\}$
        \item If $p \equiv 2 \pmod{3}$, then $(e, f, g) \not \in \{(1, 9, 1), (3, 1, 3), (3, 3, 1), (9, 1, 1)\}$
    \end{enumerate}
\end{proposition}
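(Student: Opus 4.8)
The plan is to rule out each listed decomposition type by exploiting two structural constraints on the decomposition group $D_p$ and inertia group $I_p$ sitting inside the abelian group $G = \Gal(K/\bQ) \simeq C_3^2$. First, the residue quotient $D_p/I_p \simeq \Gal(\ell/\bF_p)$ is always cyclic. Second, because $p \ne 3$, the ramification index $e$ divides $9 = 3^2$ and is therefore prime to $p$, so each local extension $K_\frp/\bQ_p$ (for $\frp \mid p$) is tamely ramified; in particular $I_p$ is cyclic and, since $D_p \le C_3^2$ is abelian, $K_\frp/\bQ_p$ is a tamely ramified abelian extension to which Proposition~\ref{prop:newton} applies. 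These are exactly the ingredients used in Propositions~\ref{prop:C2C2ram} and~\ref{prop:C9ram}, and I would follow the same template, recalling throughout that $efg = 9$ and hence $|D_p| = ef$, $|I_p| = e$.

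First I would dispatch the two types common to both parts. For $(e,f,g) = (1,9,1)$ this is precisely the case $\ell = 3$ of Theorem~\ref{thm:ClClextn}: here $e = 1$ forces $D_p = D_p/I_p$ to be cyclic, yet $|D_p| = 9$ would make $D_p = G = C_3^2$, which is not cyclic, a contradiction. For $(9,1,1)$ the extension $K_\frp/\bQ_p$ is totally ramified and tame, so its cyclic inertia group $I_p$ would equal $G = C_3^2$, again impossible; this mirrors part (2) of Proposition~\ref{prop:C2C2ram}. Both arguments are independent of $p \bmod 3$, so they simultaneously cover the $(1,9,1)$ and $(9,1,1)$ entries in parts 1 and 2.

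Next, for the two extra types in part 2, where $p \equiv 2 \pmod{3}$, I would invoke Proposition~\ref{prop:newton}. For $(3,1,3)$, the condition $f = 1$ means $K_\frp/\bQ_p$ is totally and tamely ramified of degree $3$ with residue field $\bF_p$, so Proposition~\ref{prop:newton} gives $3 \mid p - 1$, i.e.\ $p \equiv 1 \pmod 3$, contradicting the hypothesis. For $(3,3,1)$, I would pass to the maximal unramified subextension $\tilde K \subset K_\frp$, whose residue field is $\bF_{p^3}$, and apply Proposition~\ref{prop:newton} to the totally tamely ramified abelian extension $K_\frp/\tilde K$ of degree $3$; this forces $3 \mid p^3 - 1$. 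Since $p^3 \equiv p \pmod 3$, this again yields $p \equiv 1 \pmod 3$, the desired contradiction. This is the exact analogue of the $(3,3,1)$ argument in Proposition~\ref{prop:C9ram}.

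The one genuinely delicate point is the $(3,3,1)$ case, where Proposition~\ref{prop:newton} must be applied over $\tilde K$ rather than over $\bQ_p$, so that one tracks the correct residue field $\bF_{p^3}$ and obtains the divisibility $3 \mid p^3 - 1$ (not $3 \mid p-1$) before reducing via $p^3 \equiv p \pmod 3$. As a sanity check and an alternative route, one can instead apply Proposition~\ref{prop:tamefrobaction}: a Frobenius lift $\sigma$ conjugates a generator $\tau$ of the tame inertia to $\tau^p$, and for $p \equiv 2 \pmod 3$ this is $\tau^{-1} \ne \tau$, which is incompatible with $D_p$ being abelian and immediately excludes both $(3,1,3)$ and $(3,3,1)$ whenever $p \equiv 2 \pmod 3$.
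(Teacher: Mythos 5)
Your proof is correct and follows essentially the same route as the paper: the types $(1,9,1)$ and $(9,1,1)$ are excluded because a cyclic group of order $9$ cannot arise as a quotient of $C_3^2$ (trivial inertia case) or, via tameness, as its cyclic inertia subgroup, while the types $(3,1,3)$ and $(3,3,1)$ are excluded by applying Proposition~\ref{prop:newton} over $\bQ_p$ and over the maximal unramified subextension $\tilde{K}$ with residue field $\bF_{p^3}$, respectively, exactly as in the paper's proof. Your alternative argument via Proposition~\ref{prop:tamefrobaction} (a Frobenius lift conjugates the tame inertia generator to $\tau^p = \tau^{-1} \ne \tau$ when $p \equiv 2 \pmod{3}$, contradicting commutativity of the decomposition group) is also valid and gives a slightly more uniform way to dispatch both types in part 2, but your main line of reasoning coincides with the paper's.
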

\begin{proof}
The decomposition type    $(e, f, g) = (1, 9, 1)$ (resp. $(9, 1, 1)$) cannot occur since $C_9$ is not a quotient (resp. subgroup) of $C_3^2$.
    Now assume $p \equiv 2 \pmod{3}$.
    If $(e, f, g) = (3, 1, 3)$, we have $(p) = \frp_1^3 \frp_2^3 \frp_3^3$, and a corresponding extension of local fields $K_\frp / \bQ_p$ (for any $\frp = \frp_i$) is a totally and tamely ramified $C_3$-extension.
    By Proposition \ref{prop:newton}, we have $3 \mid p - 1$, a contradiction.
    If $(e, f, g) = (3, 3, 1)$, we have $(p) = \frp^3$ with $f(\frp|p) = 3$.
    Let $\tilde K = K^{I_\frp}$ be the fixed field of the inertia group $I_\frp$, which is an unramified $C_3$-extension of $\bQ_p$.
    Then its residue field is $\bF_{p^3}$, and $K / \tilde K$ is a totally and tamely ramified $C_3$-extension.
    By Proposition \ref{prop:newton} again, we get $3 \mid p^3 - 1$, which contradicts to $p \equiv 2 \pmod{3}$.
\end{proof}

By Proposition \ref{prop:C3C3ram} and Table \ref{tab:lsqeuler}, we have:
\begin{corollary}
    \label{cor:C3C3zc}
    Let $K / \bQ$ be a nonic extension with $\Gal(K / \bQ) \simeq C_3^2$ and let $p$ be a rational prime.
    If $p \equiv 2 \pmod{3}$, then we have $a_{p^3}(K) \in \{3, 165\}$.
\end{corollary}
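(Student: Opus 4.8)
The plan is to read off the possible values of $a_{p^3}(K)$ directly from Table~\ref{tab:lsqeuler} and then eliminate the impossible decomposition types using Proposition~\ref{prop:C3C3ram}. Specializing the general $\ell^2$ table to $\ell = 3$, the six a priori decomposition types of a prime in a degree-$9$ Galois extension contribute the following entries in the $a_{p^\ell}(K)$ column:
\[
(1,1,9)\mapsto \binom{11}{8}=165,\ \ (1,3,3)\mapsto 3,\ \ (1,9,1)\mapsto 0,\ \ (3,1,3)\mapsto \binom{5}{2}=10,\ \ (3,3,1)\mapsto 1,\ \ (9,1,1)\mapsto 1.
\]

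Next I would apply Proposition~\ref{prop:C3C3ram}(2). Since $p\equiv 2\pmod 3$ in particular forces $p\neq 3$, that proposition applies and rules out the four types $(1,9,1)$, $(3,1,3)$, $(3,3,1)$, and $(9,1,1)$ for a $C_3^2$-extension. Discarding these from the list above leaves only the two admissible types $(1,1,9)$ and $(1,3,3)$, whose associated coefficient values are $165$ and $3$, respectively. Hence $a_{p^3}(K)\in\{3,165\}$, as claimed.

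This is essentially a bookkeeping consequence of the two ingredients, so I do not anticipate any substantive obstacle; all of the real content lies in the already-established Proposition~\ref{prop:C3C3ram}. The only points demanding a little care are the arithmetic specialization of the binomial entries of Table~\ref{tab:lsqeuler} to $\ell=3$ (checking $\binom{\ell^2+\ell-1}{\ell^2-1}=\binom{11}{8}=165$ for the totally split type and $a_{p^\ell}=\ell=3$ for the $(1,\ell,\ell)$ type), and, if one wishes to confirm that neither value is spurious, verifying that the two surviving types genuinely occur inside a $C_3^2$-extension---$(1,1,9)$ corresponding to a trivial decomposition group and $(1,3,3)$ to a decomposition group equal to the cyclic residue-field Galois group $C_3\leq C_3^2$.
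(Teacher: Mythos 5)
Your proposal is correct and is exactly the argument the paper intends: the corollary is stated there as an immediate consequence of Proposition~\ref{prop:C3C3ram} and Table~\ref{tab:lsqeuler}, and you have simply written out that implication, specializing the table to $\ell = 3$ and using part~(2) of the proposition (noting correctly that $p \equiv 2 \pmod 3$ forces $p \neq 3$, so the proposition applies) to eliminate the types with coefficient values $0$, $10$, and $1$, leaving $\{3, 165\}$. Your closing remark about both values actually occurring is unnecessary for the stated containment but harmless.
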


\begin{example}[$C_9$, sufficient condition]
    \label{ex:C9_1000}
    Assume that a nonic Galois extension \( K / \mathbb{Q} \) follows the bold branch in Figure~\ref{fig:nonic_zeta_dt}, so that \( a_{10^3} = a_{2^3} \cdot a_{5^3} \le  4 \).  
Then \( \Gal(K / \mathbb{Q}) \simeq C_9 \).  
Indeed, if \( \Gal(K / \mathbb{Q}) \simeq C_3^2 \), then by Corollary~\ref{cor:C3C3zc}, we should have
\(
a_{10^3} = a_{2^3} \cdot a_{5^3} \ge 3 \cdot 3 = 9.
\)
This explains the correctness of the prediction.  

\end{example}

\begin{example}[ False positive for $C_3^2$]
\label{ex:C3C3}
    Although the decision tree (Figure \ref{fig:nonic_zeta_dt}) attains 100\% accuracy on the test set, this does not imply that it is a perfect classifier. Indeed, there exits a $C_9$-extension that satisfies all the conditions of the right-most branch (the dashed bold branch).
    With brute-force search, we found a prime $q = 960643$ satisfying $q \equiv 1 \pmod{9}$ and $p^{\frac{q-1}{9}} \equiv 1 \pmod{q}$ for $p = 2, 3, 5, 7$.
    Let \( L = \mathbb{Q}(\zeta_q) \), and let \( K \) be the unique nonic subextension of \( L \).    Then the Frobenius at $p = 2, 3, 5, 7$ are all trivial on $K$, so these primes split completely in $K$. From Table~\ref{tab:lsqeuler}, we find that \( a_{2^3} = a_{3^3} = a_{5^3} = a_{7^3} = 165 \), satisfying all the inequalities required by the decision tree, even though \( K \) is not a \( C_3^2 \)-extension.
    The extension is ramified only at $960643$, and it is not currently listed in \texttt{LMFDB}~\cite{lmfdb}.
    
    Note that there are infinitely many such primes \( q \) by the Chebotarev density Theorem.   More precisely, an odd prime \( q \) satisfies \( q \equiv 1 \pmod{9} \) if and only if \( \Phi_9(x) = x^6 + x^3 + 1 \equiv 0 \pmod{q} \) has a solution.   For such a prime \( q \), we have \( p^{\frac{q - 1}{9}} \equiv 1 \pmod{q} \) if and only if the polynomial \( f_p(x) \coloneqq x^9 - p \) has a solution modulo \( q \).   By the Chebotarev density Theorem, the set of primes \( q \) for which all of \( \Phi_9(x), f_2(x), f_3(x), f_5(x), f_7(x) \) have solutions modulo \( q \) has positive (natural) density; for example, the compositum of the splitting fields of these polynomials has a degree $39366 = 2 \cdot 3^9$ (which can be verified, for instance, using \texttt{Magma}~\cite{magma}). Hence, at least $1 / 39366$ of all primes split completely in the compositum, and for such primes \(q\), each polynomial has a solution modulo \(q\).

\end{example}

\paragraph{Logistic regression}

We also trained logistic regression models and see if they can successfully distinguish Galois groups for quartic and nonic number fields.
With 1000 zeta coefficients (resp. with standard normalization), they can obtain accuracy of  99.60\% (resp. 99.41\%) and 97.16\% (resp. 78.51\%) for quartic and nonic fields, respectively (Table \ref{tab:galois_4} and \ref{tab:galois_9}).
We checked the indices with largest and smallest weights (Table \ref{tab:quartic_nonic_lr_weights}).
One can find that most of the indices for large weights are square or cube-indexed coefficients, which means that the model tends to predict output as \emph{1}, which corresponds to $C_2^2$ or $C_3^2$, when those features are large.
These results align well with Corollary \ref{cor:ellsqzeta}.

\begin{table}[h]
    \begin{center}
        \resizebox{\columnwidth}{!}{%
        \begin{tabular}{c|c|c|c|c|c|c|c|c|c|c|c}
            \toprule
            \multirow{2}{*}{quartic} & top & \textbf{25} (8.4) & \textbf{49} (7.2) & \textbf{961} (6.1) & \textbf{841} (5.5) & \textbf{529} (5.3) & \textbf{9} (5.1) & \textbf{361} (4.3) & \textbf{36} (3.2) & \textbf{100} (3.0) & \textbf{289} (3.0) \\
            \cline{2-12}
            & bottom & 343 (-10.6) & \textbf{625} (-7.6) & 31 (-6.8) & \textbf{81} (-6.3) & 29 (-5.7) & 23 (-5.5) & 19 (-4.7) & \textbf{324} (-3.7) & \textbf{16} (-3.6) & 17 (-3.4) \\
            \midrule
            \multirow{2}{*}{nonic} & top & 392 (2.0) & \textbf{27} (1.9) & 837 (1.9) & \textbf{8} (1.8) & 999 (1.8) & 456 (1.6) & \textbf{216} (1.6) & 248 (1.6) & \textbf{125} (1.6) & 639 (1.5) \\
            \cline{2-12}
            & bottom & 213 (-1.3) & 111 (-1.3) & 321 (-1.0) & 57 (-1.0) & 291 (-0.8) & 3 (-0.8) & 717 (-0.7) & 87 (-0.7) & 49 (-0.6) & 807 (-0.6) \\
            \midrule
            \multirow{2}{*}{nonic*} & top & \textbf{216} (0.6) & \textbf{1000} (0.5) & \textbf{64} (0.2) & 248 (0.2) & 513 (0.1) & \textbf{125} (0.1) & \textbf{8} (0.1) & 584 (0.1) & \textbf{27} (0.1) & 424 (0.1) \\
            \cline{2-12}
            & bottom & 625 (-0.5) & 199 (-0.2) & 256 (-0.2) & 267 (-0.2) & 569 (-0.2) & 941 (-0.1) & 677 (-0.1) & 619 (-0.1) & 200 (-0.1) & 241 (-0.1) \\
            \bottomrule
        \end{tabular}
        }
        \caption{ Indices of the 10 largest and smallest weights (with actual weight values in parentheses) among 1000 logistic regression coefficients used to classify Galois groups of quartic and nonic fields.  The last two rows with nonic* is the weights of the model trained with original zeta coefficients without normalization.
        The bold numbers correspond to square or cube-indexed coefficients.}
        \label{tab:quartic_nonic_lr_weights}
    \end{center}
\end{table}

We also compared the distribution of the absolute values of the model weights for square-indexed (resp. cube-indexed) coefficients with the overall distribution.  
We found that the former tends to be higher on average than the latter (Figure~\ref{fig:galois_4_9_lr_weights}).  
For quartic (resp. nonic) fields, the average absolute value of all weights is 0.19 (resp. 0.02), while the average for square-indexed (resp. cube-indexed) coefficients is 1.79 (resp. 0.20).  
This shows that the logistic regression models, like the decision tree models, are also able to learn that such coefficients are useful for prediction.

Table \ref{tab:galois_4} and \ref{tab:galois_9} also include results for different subsets of zeta coefficients: using only square-indexed (or cube-indexed) coefficients, or using only prime(-power)-indexed coefficients.  
Notably, using just 31 square-indexed coefficients still yields an accuracy of 99.99\% (resp. 100\%) for quartic fields (resp. nonic fields), further validating their importance.  
Moreover, the performance of models trained only on prime-power-indexed coefficients matches that of models using all coefficients,
indicating that the logistic regression model is able to learn the importance of such coefficients for prediction, in agreement with the decision tree model.
In contrast, the model achieves only 91.11\% accuracy on quartic dataset with prime-indexed coefficients, which is not particularly high given the dataset's bias: 88.04\% of quartic Galois fields have \( C_2^2 \) as their Galois group (Table~\ref{tab:nf_count_galois}).

\begin{remark}
     Mostly, the performance of logistic regression models with normalized zeta coefficients are similar or better than one trained with original zeta coefficients, e.g. quartic (\ref{tab:galois_4}), sextic (\ref{tab:galois_6}), and decic (\ref{tab:galois_10}). However, this is not the case for nonic Galois extensions, where the accuracy with original zeta coefficient (97.60\%) is much higher than with normalized (78.51\%), where the latter barely matches the majority baseline (Table \ref{tab:nf_count_galois}). It would be interesting to figure out why this happens specifically for nonic extensions. 
\end{remark}

\subsubsection{Learning Galois groups from polynomial coefficients}
\label{subsubsec:49_poly}

\begin{figure}[t]
    \centering
    \footnotesize
\begin{forest}
  label L/.style={edge label={node[midway,left,font=\scriptsize]{#1}}},
  label R/.style={edge label={node[midway,right,font=\scriptsize]{#1}}},
  for tree={forked edge, child anchor=north, for descendants={edge=->}}
  [{$c_{2} \le -16650$}, draw
    [{$c_{3} \le -2$}, draw, label L=Y, edge={very thick, densely dashed}
      [{$C_2^2$}, rectangle, thick, draw, label L=Y, tier=bottom, line width=1.5pt]
      [{$c_{1} \le -31742$}, draw, label R=N, edge={very thick, densely dashed}
        [{$c_{0} \le 5075281152$}, draw, label L=Y, edge={very thick, densely dashed}
          [{$c_{1} \le -60201$}, draw, label L=Y, edge={very thick, densely dashed}
            [{$c_{1} \le -80278$}, draw, label L=Y, edge={very thick, densely dashed}
              [{$C_4$}, rectangle, thick, draw, label L=Y, tier=bottom, line width=1.5pt, edge={very thick, densely dashed}, fill=gray!20]
              [{${\cdots}$}, rectangle, draw, label R=N, tier=bottom]
            ]
            [{${\cdots}$}, rectangle, draw, label R=N, tier=bottom]
          ]
          [{${\cdots}$}, rectangle, draw, label R=N, tier=bottom]
        ]
        [{${\cdots}$}, rectangle, draw, label R=N, tier=bottom]
      ]
    ]
    [{${\cdots}$}, rectangle, draw, label R=N, tier=bottom]
  ]
\end{forest}
    \caption{A decision tree predicts the Galois groups of quartic fields from polynomial coefficients.}
    \label{fig:quartic_poly_dt}
\end{figure}

We also conduct experiments on predicting Galois groups from the coefficients of defining polynomials, using decision tree and logistic regression models.  
Similar experiments were previously carried out in~\cite{he2022machine} using random forest models.

Let \( c_0, \dots, c_{d-1} \) be the coefficients of a monic polynomial of degree \( d \), so that \( f(x) = x^d + \sum_{k=0}^{d-1} c_k x^k, \)
where the polynomial is uniquely determined by the procedure described in Section~\ref{subsec:prelim_lmfdb}.  
Using these polynomial coefficients as features, decision tree models achieve reasonably high accuracies of  95.16\% and  94.15\% for quartic and nonic fields, respectively, although the resulting models are highly complex and not easily interpretable.
Nevertheless, we found that the distributions of the coefficients vary significantly across Galois groups, and the models appear to exploit this variation for prediction (Figure~\ref{fig:quartic_poly_dt}).  
As shown in Table~\ref{tab:quartic_poly}, the constant term \( c_0 \) tends to be much larger for polynomials with Galois group \( C_4 \) compared to those with \( C_2^2 \), while \( c_1 \) and \( c_2 \) (resp. \( c_3 \)) are much (resp. slightly) smaller.

There are 146{,}288 quartic fields in the training set, of which 17{,}483 (resp. 128{,}805) have Galois group \( C_4 \) (resp. \( C_2^2 \)).  
The first node of the decision tree checks whether \( c_2 \le -16{,}650 \).  
Only 576 of the \( C_2^2 \)-extensions (0.44\% of the 128{,}805 \( C_2^2 \)-extensions) satisfy this condition, while 3{,}025 of the \( C_4 \)-extensions (17.3\% of the 17{,}483 \( C_4 \)-extensions) do.
Following the remaining dashed path in Figure~\ref{fig:quartic_poly_dt}, the tree predicts that a quartic field with   \[ c_0 \le 5{,}075{,}281{,}152, \quad c_1 \le -80{,}278, \quad \text{and} \quad c_3 > -2 \]
has Galois group \( C_4 \).  
This prediction is reasonable in light of the discrepancy in the distribution of polynomial coefficients shown in Table~\ref{tab:quartic_poly}.

Similarly, we found that the distributions of polynomial coefficients differ significantly across Galois groups of nonic extensions (Table~\ref{tab:nonic_poly}), and the decision tree model appears to use this information for prediction.   The first node of the tree checks whether \( c_1 \le 233{,}597{,}936 \approx 2.3 \times 10^8 \); if the condition is not satisfied, the model predicts the Galois group to be \( C_9 \).   This is reasonable, considering that the values of \( c_1 \) for \( C_9 \)-extensions are typically large and positive, while those for \( C_3^2 \)-extensions tend to be negative.
At present, we do not have a clear mathematical explanation for this discrepancy, and it would be interesting to explore one.  
We emphasize again that this phenomenon may strongly depend on how \texttt{LMFDB} selects a unique defining polynomial for each number field (see Section~\ref{subsec:prelim_lmfdb}).

In contrast, the logistic regression model performed very poorly on quartic fields when using polynomial coefficients, achieving only  22.45\% accuracy, which is significantly worse than the accuracy by a majority guess. This poor performance is due to the large magnitude of the polynomial coefficients; after standard normalization, the model's accuracy matches or higher than the  majority baseline:  88.06\% for quartic fields and  85.46\% for nonic fields (the majority baselines are 88.04\% and 77.57\%, respectively).

\begin{table}[h]
    \begin{center}
        \resizebox{0.6\columnwidth}{!}{%
        \begin{tabular}{c|c|c|c|c|c|c|c|c}
            \toprule
             & \multicolumn{2}{c|}{$c_0$} & \multicolumn{2}{c|}{$c_1$} & \multicolumn{2}{c|}{$c_2$} & \multicolumn{2}{c}{$c_3$}  \\
            \midrule
            $C_4$ & $1.0 \cdot 10^{11}$ & $2.1 \cdot 10^6$ & $-1.1 \cdot 10^7$ & $0$ & $-4.1 \cdot 10^4$ & $-189$ & $-0.6$ & $-1$  \\
            \midrule
            $C_2^2$ & $6.3 \cdot 10^7$ & $9.7 \cdot 10^4$ & $77.1$ & $0$ & $-4.6 \cdot 10^2$ & $-52$ & $-0.5$ & $0$ \\
            \bottomrule
        \end{tabular}
        }
        \caption{Mean (left) and median (right) of polynomial coefficients for quartic extensions.}
        \label{tab:quartic_poly}
    \end{center}
\end{table}

\iffalse
\begin{table}[h]
    \begin{center}
        \resizebox{\columnwidth}{!}{%
        \begin{tabular}{c|c|c|c|c|c|c|c|c|c|c|c|c|c|c|c|c|c|c}
            \toprule
             & \multicolumn{2}{c|}{$c_0$} & \multicolumn{2}{c|}{$c_1$} & \multicolumn{2}{c|}{$c_2$} & \multicolumn{2}{c|}{$c_3$} & \multicolumn{2}{c|}{$c_4$} & \multicolumn{2}{c|}{$c_5$} & \multicolumn{2}{c|}{$c_6$} & \multicolumn{2}{c|}{$c_7$} & \multicolumn{2}{c}{$c_8$}  \\
            \midrule
            $C_9$ & $-4.5 \cdot 10^8$ & $-1.9 \cdot 10^5$ & $8.1 \cdot 10^8$ & $8.1 \cdot 10^6$ & $-4.9 \cdot 10^7$ & $-5.0 \cdot 10^4$ & $-2.4 \cdot 10^7$ & $-4.6 \cdot 10^6$ & $4.4 \cdot 10^5$ & $1.2 \cdot 10^4$ & $2.0 \cdot 10^5$ & $8.9 \cdot 10^4$ & $-1.3 \cdot 10^3$ & $-211$ & $-7.3 \cdot 10^2$ & $-657$ & $-0.5$ & $0$  \\
            \midrule
            $C_3^2$ & $-1.5 \cdot 10^9$ & $-5.4 \cdot 10^4$ & $-8.4 \cdot 10^8$ & $-1.0 \cdot 10^6$ & $-1.2 \cdot 10^8$ & $-1.9 \cdot 10^6$ & $-9.4 \cdot 10^6$ & $-9.1 \cdot 10^5$ & $6.2 \cdot 10^5$ & $6.1 \cdot 10^4$ & $1.1 \cdot 10^5$ & $3.5 \cdot 10^4$ & $-8.4 \cdot 10^2$ & $-346$ & $-4.6 \cdot 10^2$ & $-340$ & $-0.9$ & $0$ \\
            \bottomrule
        \end{tabular}
        }
        \caption{Mean (left) and median (right) of polynomial coefficients for nonic extensions.}
        \label{tab:nonic_poly}
    \end{center}
\end{table}
\fi

\begin{table}[h]
% \tiny
\centering
    \resizebox{0.75\columnwidth}{!}{%
    \begin{tabular}{c|c|c|c|c|c|c|c|c}
        \toprule
         & \multicolumn{2}{c|}{$c_0$} & \multicolumn{2}{c|}{$c_1$} & \multicolumn{2}{c|}{$c_2$} & \multicolumn{2}{c}{$c_3$} \\
        \midrule
        $C_9$ & $-4.5 \cdot 10^8$ & $-1.9 \cdot 10^5$ & $8.1 \cdot 10^8$ & $8.1 \cdot 10^6$ & $-4.9 \cdot 10^7$ & $-5.0 \cdot 10^4$ & $-2.4 \cdot 10^7$ & $-4.6 \cdot 10^6$ \\
        \midrule
        $C_3^2$ & $-1.5 \cdot 10^9$ & $-5.4 \cdot 10^4$ & $-8.4 \cdot 10^8$ & $-1.0 \cdot 10^6$ & $-1.2 \cdot 10^8$ & $-1.9 \cdot 10^6$ & $-9.4 \cdot 10^6$ & $-9.1 \cdot 10^5$ \\
        \bottomrule
    \end{tabular}
    }
    
    \vspace{0.5em}  % Space between the two blocks

    \resizebox{0.75\columnwidth}{!}{%
    \begin{tabular}{c|c|c|c|c|c|c|c|c|c|c}
        \toprule
         & \multicolumn{2}{c|}{$c_4$} & \multicolumn{2}{c|}{$c_5$} & \multicolumn{2}{c|}{$c_6$} & \multicolumn{2}{c|}{$c_7$} & \multicolumn{2}{c}{$c_8$} \\
        \midrule
        $C_9$ & $4.4 \cdot 10^5$ & $1.2 \cdot 10^4$ & $2.0 \cdot 10^5$ & $8.9 \cdot 10^4$ & $-1.3 \cdot 10^3$ & $-211$ & $-7.3 \cdot 10^2$ & $-657$ & $-0.5$ & $0$ \\
        \midrule
        $C_3^2$ & $6.2 \cdot 10^5$ & $6.1 \cdot 10^4$ & $1.1 \cdot 10^5$ & $3.5 \cdot 10^4$ & $-8.4 \cdot 10^2$ & $-346$ & $-4.6 \cdot 10^2$ & $-340$ & $-0.9$ & $0$ \\
        \bottomrule
    \end{tabular}
    }
    \caption{Mean (left) and median (right) of polynomial coefficients for nonic extensions.}
    \label{tab:nonic_poly}
\end{table}

Tables \ref{tab:galois_4} and \ref{tab:galois_9} summarize the performance of the models on quartic and nonic fields.

\subsection{Sextic fields}

For a degree \( 6 \) Galois extension \( K / \mathbb{Q} \) (a \emph{sextic} extension), there are two possible Galois groups: \( C_6 \) (cyclic) and \( S_3 \).  
In particular, the nonabelian group \( S_3 \) can arise as a Galois group, which makes the sextic case more interesting than the previously discussed cases of degree \( 4 \) and \( 9 \).

There has been various work on computing the Galois groups of sextic fields (including non-Galois extensions) from their defining polynomials \cite{cohen2007number,cavallo2019elementary,awtrey2015irreducible}.   
As noted at the beginning of this Section~\ref{sec:galois}, our interest lies in investigating how the Galois group can be determined from a finite number of Dedekind zeta function coefficients and polynomial coefficients, guided by the interpretation of machine learning (ML) experiments.

\subsubsection{Learning Galois groups from zeta coefficients}

\paragraph{Decision tree}

\begin{figure}[t]
    \centering
    \footnotesize
\begin{forest}
  label L/.style={edge label={node[midway,left,font=\scriptsize]{#1}}},
  label R/.style={edge label={node[midway,right,font=\scriptsize]{#1}}},
  for tree={
    font=\normalsize,
    forked edge,
    child anchor=north,
    for descendants={edge=->}
  }
  [{$a_{2^8}  = 0$}, draw, edge={very thick},
    [{$a_{2^6} \le 2$}, draw, label L=Y, edge={very thick},
      [{$C_6$}, rectangle, thick, draw, label L=Y, tier=bottom, line width=1.5pt]
      [{$a_{3^4} \le 5$}, draw, label R=N, edge={very thick}
        [{$a_{23^2} =0$}, draw, label L=Y, edge={very thick}
          [{${\cdots}$}, rectangle, draw, label L=Y, tier=bottom]
          [{$a_{19^2} \le 2$}, draw, label R=N, edge={very thick}
            [{$a_{13^2} \le 2$}, draw, label L=Y, edge={very thick}
              [{${\cdots}$}, rectangle, draw, label L=Y, tier=bottom]
              [{$a_{3^6} \le 2$}, draw, label R=N, edge={very thick}
                [{${\cdots}$}, rectangle, draw, label L=Y, tier=bottom]
                [{$a_{19^2} =0$}, draw, label R=N, edge={very thick}
                  [{${\cdots}$}, rectangle, draw, label L=Y, tier=bottom]
                  [{$C_6$}, rectangle, thick, draw, label R=N, tier=bottom, line width=1.5pt, edge={very thick}, fill=gray!40]
                ]
              ]
            ]
            [{${\cdots}$}, rectangle, draw, label R=N, tier=bottom]
          ]
        ]
        [{${\cdots}$}, rectangle, draw, label R=N, tier=bottom]
      ]
    ]
    [{${\cdots}$}, rectangle, draw, label R=N, tier=bottom]
  ]
\end{forest}
    \caption{A decision tree predicts Galois groups of sextic fields from zeta coefficients up to $a_{1000}(K)$.}
    \label{fig:sextic_zeta}
\end{figure}

A decision tree model trained on zeta coefficients with \( n = 1000 \) achieves an accuracy of 98.96\% in classifying the Galois groups of sextic fields.  
Similar to the quartic and nonic cases, we found that the top nodes of the tree correspond to square- or cube-indexed coefficients (see Figure~\ref{fig:sextic_zeta}, where the first few nodes are \( a_{2^8} \), \( a_{2^6} \), \( a_{3^6} \), \( a_{3^4} \), \( a_{18^2} \), \( a_{20^2} \), etc.).
Based on this observation, we trained a model using only the square- and cube-indexed coefficients up to \( n = 1000 \) (there are 38 such numbers), and obtained an even higher accuracy of 99.31\%.

When we realized importance of square- and cube-indexed coefficients in decision trees' performances, we performed exploratory data analysis (EDA) on the dataset we used\footnote{This can be reproduced, for small primes, with \texttt{verify\_galois.sage} in the GitHub repository.}. In particular, for each prime $p$, we investigated the possible values of $a_{p^2}(K)$ and $a_{p^3}(K)$ over all sextic Galois extension in \texttt{LMFDB}.
Interestingly, we found that the possible values of $a_{p^2}(K)$ depends on the Galois group \emph{and} $p \pmod{3}$.
Especially, we found that, when $p \equiv 1 \pmod{3}$, then $a_{p^2}(K) = 1$ only if $K / \bQ$ is a cyclic extension, and when $p \equiv 2 \pmod{3}$, then $a_{p^2}(K) = 1$ only if $K / \bQ$ is a sextic extension.
Also, we observed that $a_{p^3}(K) = 1$ only if $K / \bQ$ is a cyclic extension.

 In the remainder of this subsection, we prove the above observation (Corollary \ref{cor:sextic_zc}) and explain how it can be used to show that the bold branch of the tree (Figure \ref{fig:sextic_zeta}) give \emph{provably} correct predictions (Example \ref{ex:sextic_zeta_branch}). 
As in previous cases, we analyze the possible values of the zeta coefficients based on the decomposition types of primes.  
Table~\ref{tab:sexticeuler} lists the Euler factors and the corresponding zeta coefficients for each possible decomposition type.

\begin{table}[h]
% \scriptsize
    \begin{center}
        \begin{tabular}{c|c|c|c|c}
            \toprule
            $(e, f, g)$ & Euler factor & $a_p(K)$ & $a_{p^2}(K)$ & $a_{p^3}(K)$\\
            \midrule
            $(1, 1, 6)$ & $(1 - p^{-s})^{-6} = \sum_{k \ge 0}\binom{k+5}{5} p^{-ks}$ & 6 & 21 & 56 \\
            $(1, 2, 3)$ & $(1 - p^{-2s})^{-3} = \sum_{k \ge 0} \binom{k + 2}{2}p^{-2ks}$ & 0 & 3 & 0\\
            $(1, 3, 2)$ & $(1 - p^{-3s})^{-2} = \sum_{k \ge 0} (k + 1)p^{-3ks}$ & 0 & 0 & 2 \\
            $(1, 6, 1)$ & $(1 - p^{-6s})^{-1} = \sum_{k \ge 0} p^{-6ks}$ & 0 & 0 & 0 \\
            $(2, 1, 3)$ & $(1 - p^{-s})^{-3} = \sum_{k \ge 0} \binom{k+2}{2}p^{-ks}$ & 3 & 6 & 10 \\
            $(2, 3, 1)$ & $(1 - p^{-3s})^{-1} = \sum_{k \ge 0}p^{-3ks}$ & 0 & 0 & 1\\
            $(3, 1, 2)$ & $(1 - p^{-s})^{-2} = \sum_{k \ge 0} (k + 1) p^{-ks}$ & 2 & 3 & 4\\
            $(3, 2, 1)$ & $(1 - p^{-2s})^{-1} = \sum_{k \ge 0} p^{-2ks}$ & 0 & 1 & 0\\
            $(6, 1, 1)$ & $(1 - p^{-s})^{-1} = \sum_{k \ge 0} p^{-ks}$ & 1 & 1 & 1 \\
            \bottomrule
        \end{tabular}
        \caption{Decomposition types, Euler factors, $p$, $p^2$, $p^3$-th zeta coefficients of sextic extensions.}
        \label{tab:sexticeuler}
    \end{center}
\end{table}

\begin{proposition}
    \label{prop:C6ram}
    Let $K / \bQ$ be a sextic extension with $\Gal(K / \bQ) \simeq C_6$ and let $p$ be a prime with decomposition type $(e, f, g)$ in $K$.
    If $p \equiv 5 \pmod{6}$, then $(e, f, g) \not \in \{(3, 1, 2), (3, 2, 1), (6, 1, 1)\}$.
\end{proposition}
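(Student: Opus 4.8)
The plan is to exploit the direct-product structure $C_6 \simeq C_2 \times C_3$ together with Proposition \ref{prop:newton}, but applied to the cubic subfield of $K$ rather than to the full local extension at $p$. Write $K_3 = K^{C_2}$ for the fixed field of the order-$2$ subgroup of $\Gal(K/\bQ) \simeq C_6$; this is a cyclic cubic field, and the restriction map identifies $\Gal(K_3/\bQ)$ with the quotient $C_6 \to C_3$ obtained by killing the $C_2$-factor.

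First I would show that $p$ is unramified in $K_3$. If it were not, then since $p \equiv 2 \pmod 3$ forces $p \ne 3$, the inertia group of $p$ in $\Gal(K_3/\bQ) \simeq C_3$ would be a nontrivial subgroup, hence all of $C_3$; thus $p$ would be totally ramified in $K_3$ and the local extension $K_{3,\frp}/\bQ_p$ would be a totally and tamely ramified $C_3$-extension with residue field $\bF_p$. Proposition \ref{prop:newton} would then give $3 \mid p-1$, i.e. $p \equiv 1 \pmod 3$, contradicting $p \equiv 2 \pmod 3$. Hence $p$ is unramified in $K_3$. Next I would transfer this back to $K$ using the functoriality of inertia under quotients: the image of $I_p \le C_6$ under the quotient map $C_6 \to \Gal(K_3/\bQ) \simeq C_3$ is exactly the inertia group of $p$ in $K_3$, which we have just shown is trivial. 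Therefore $I_p$ is contained in the kernel $C_2$ of this map, so the ramification index satisfies $e = |I_p| \le 2$. Since each of the types $(3,1,2)$, $(3,2,1)$, and $(6,1,1)$ has $e \ge 3$, none can occur, which is the claim.

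I expect the main subtlety to be the justification for passing to $K_3$ rather than working with the full local extension $K_\frp/\bQ_p$ directly. For the type $(3,2,1)$ the inertia field of $K_\frp/\bQ_p$ has residue field $\bF_{p^2}$, so a naive application of Proposition \ref{prop:newton} to $K_\frp$ over its inertia field only yields $3 \mid p^2 - 1$, which is automatically satisfied when $p \equiv 2 \pmod 3$ and produces no contradiction. The point is that the cubic ramification genuinely lives over the smaller residue field $\bF_p$ inside $K_3$, and it is only there that Newton's bound bites; the decomposition $C_6 \simeq C_2 \times C_3$ is precisely what lets us isolate the $C_3$-ramification over $\bF_p$ from the residue-degree contribution coming from the quadratic subfield. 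The only standard fact I would need to invoke beyond the results stated in the excerpt is that the inertia group of a quotient extension is the image of the inertia group, which is routine.
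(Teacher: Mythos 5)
Your proof is correct and is essentially the paper's own argument: the paper likewise isolates a totally and tamely ramified $C_3$-extension of $\bQ_p$ --- namely the unique cubic subextension of $K_\frp/\bQ_p$, which is precisely the localization of your $K_3$ --- and applies Proposition~\ref{prop:newton} to reach the contradiction $3 \mid p-1$. The only difference is organizational: the paper argues case by case over the three decomposition types, whereas you deduce $e \le 2$ uniformly from the triviality of the inertia image in $\Gal(K_3/\bQ)$.
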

\begin{proof}
   
Assume \( (e, f, g) \in \{(3, 1, 2),\, (3, 2, 1),\, (6, 1, 1)\} \).   In all cases, the decomposition group has order \( ef = 3 \) or \( 6 \), and the corresponding extension \( K_\mathfrak{p} / \mathbb{Q}_p \) is a totally ramified \( C_3 \)- or \( C_6 \)-extension.   (For \( (e, f, g) = (3, 1, 2) \), we have \( (p) = \mathfrak{p}_1^3 \mathfrak{p}_2^3 \), and we choose either \( \mathfrak{p}_1 \) or \( \mathfrak{p}_2 \) as \( \mathfrak{p} \).) Let \( \tilde{K} \subset K_\mathfrak{p} \) be the unique subextension of degree~3 over \( \mathbb{Q}_p \).   Then \( \tilde{K} / \mathbb{Q}_p \) is a totally and tamely ramified \( C_3 \)-extension (note that $p\equiv 5 \pmod{6}$ implies $p \ne 2, 3$, so the extension is tame), and Proposition~\ref{prop:newton} implies that \( 3 \mid (p - 1) \), which contradicts the assumption \(  p \equiv 5 \pmod{6} \).
\end{proof}

\begin{lemma}
    \label{lem:p16_cyclic}
    Let $p \equiv 1 \pmod{6}$ be a prime. Then any degree 6 Galois extension of $\bQ_p$ is cyclic.
\end{lemma}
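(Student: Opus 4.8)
The plan is to exploit the ramification structure of $G = \Gal(L/\bQ_p)$, which has order $6$, together with the constraint $p \equiv 1 \pmod 6$. The first observation is that since $p \equiv 1 \pmod 6$ we have $p \ge 7$, so $p$ is coprime to $6 = |G|$; in particular the wild inertia subgroup $G_1$, being a $p$-group contained in $G$, must be trivial, and hence $L/\bQ_p$ is tamely ramified. By the review in Section~\ref{subsec:prelim_tameram}, tameness forces the inertia subgroup $G_0$ to be cyclic, say $G_0 = \langle \tau \rangle$ with $|G_0| = e = e_{L/\bQ_p}$, while the quotient $G/G_0 \cong \Gal(\ell/\bF_p)$ is cyclic, generated by the image of a Frobenius lift $\sigma$. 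Since $e = |G_0|$ divides $|G| = 6$, we have $e \in \{1,2,3,6\}$, and in every case $e \mid 6$, hence $e \mid p - 1$ because $p \equiv 1 \pmod 6$.

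Next I would show that $G$ is abelian. Since $G/G_0$ is generated by the image of $\sigma$ and $G_0 = \langle \tau \rangle$, the whole group is generated by $\sigma$ and $\tau$. By Proposition~\ref{prop:tamefrobaction}, the Frobenius lift acts on the tame inertia generator by $\sigma \tau \sigma^{-1} = \tau^p$. Because $e \mid p - 1$ we have $p \equiv 1 \pmod e$, and since $\tau$ has order $e$ this gives $\tau^p = \tau$; thus $\sigma$ and $\tau$ commute. A group generated by two commuting elements is abelian, so $G$ is abelian of order $6$, and therefore $G \cong C_6$, which is exactly the assertion that $L/\bQ_p$ is cyclic.

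I do not expect any serious obstacle: once tameness is established, the argument collapses to the single conjugation relation of Proposition~\ref{prop:tamefrobaction}. The only point requiring a little care is the divisibility bookkeeping---verifying that $e \mid 6$ together with $p \equiv 1 \pmod 6$ really forces $\tau^p = \tau$ for a generator $\tau$ of order $e$---but this is immediate from $e \mid p-1$. An alternative, more hands-on route would be to enumerate the possible pairs $(e,f)$ with $ef = 6$ and rule out the nonabelian group $S_3$ directly by inspecting the inertia subgroup in each case; however, the Frobenius-action argument is cleaner and sidesteps this case analysis entirely.
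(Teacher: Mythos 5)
Your proof is correct and follows essentially the same route as the paper's: both arguments rest on tameness (from $(p,6)=1$), cyclicity of the inertia group, and Proposition~\ref{prop:tamefrobaction} to conclude $\sigma\tau\sigma^{-1}=\tau^p=\tau$, forcing commutativity. The only difference is organizational --- you prove directly that $G$ is abelian (hence $C_6$), while the paper assumes $G\simeq S_3$, pins down $(e,f)=(3,2)$, and derives a contradiction --- so the mathematical content is identical.
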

\begin{proof}
    Let \( K / \mathbb{Q}_p \) be a sextic Galois extension with \( G = \Gal(K / \mathbb{Q}_p) \simeq S_3 \).   Since \( (p, 6) = 1 \), the extension is tamely ramified, so the inertia group \( I \) is cyclic.   Then, \( G \) fits into the short exact sequence 
    \begin{equation}
        1 \to C_e \to G \to C_f \to 1, \label{eqn:tameram_exact_seq}
    \end{equation}
    and the assumption \( G \simeq S_3 \) implies \( (e, f) = (3, 2) \). Let \( \tau \) be a generator of the inertia group \( I \simeq C_3 \), and let \( \sigma \) be a lift of Frobenius, i.e., a generator of \( C_f \).   By Proposition~\ref{prop:tamefrobaction} and \( p \equiv 1 \pmod{3} \), we have \[ \sigma \tau \sigma^{-1} = \tau^p = \tau, \] which implies that \( \tau \) and \( \sigma \) commute.   Thus, \( G \simeq C_3 \times C_2 \simeq C_6 \), contradicting  \( G \simeq S_3 \).
\end{proof}

\begin{corollary}
    \label{cor:S3ram}
    Let $K / \bQ$ be a sextic extension with $G = \Gal(K / \bQ) \simeq S_3$ and let $p$ be a rational prime with decomposition type $(e, f, g)$ in $K$. Then: 
    \begin{enumerate}
        \item For any $p$, $(e, f, g) \not \in \{(1, 6, 1), (2, 3, 1)\}$.
        \item If $p \equiv 1 \pmod{6}$, then $(e, f, g) \not \in \{(3, 2, 1), (6, 1, 1)\}$.
        \item If $p \equiv 2 \pmod{3}$, then $(e, f, g) \not \in \{(3, 1, 2), (6, 1, 1)\}$.
    \end{enumerate}
\end{corollary}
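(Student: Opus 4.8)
The plan is to prove Corollary~\ref{cor:S3ram} by combining the constraints coming from the group-theoretic structure of $S_3$ with the two preceding results, Lemma~\ref{lem:p16_cyclic} and the tame-ramification arguments already used for $C_6$ in Proposition~\ref{prop:C6ram}. The three parts correspond to three different sources of obstruction, so I would handle them one at a time, matching each excluded decomposition type against Table~\ref{tab:sexticeuler}.

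For part (1), the key observation is that a decomposition type $(e,f,g)$ with $g=1$ forces the local extension $K_{\mathfrak p}/\bQ_p$ to have degree $ef = 6$, so $D_{\mathfrak p} = \Gal(K_{\mathfrak p}/\bQ_p) \simeq G \simeq S_3$. For $(1,6,1)$ the extension is unramified, hence $D_{\mathfrak p} \simeq \Gal(\bF_{p^6}/\bF_p) \simeq C_6$, which cannot be $S_3$. For $(2,3,1)$ I would argue via the short exact sequence $1 \to I_{\mathfrak p} \to D_{\mathfrak p} \to D_{\mathfrak p}/I_{\mathfrak p} \to 1$ with $I_{\mathfrak p} \simeq C_e = C_2$ and $D_{\mathfrak p}/I_{\mathfrak p} \simeq C_f = C_3$; since the only normal subgroup of order $2$ would have to be central (being $\Aut(C_2)$ trivial forces the conjugation action to be trivial), this makes $D_{\mathfrak p}$ abelian, contradicting $S_3$. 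Equivalently, $S_3$ has no normal subgroup of order $2$, so a filtration with inertia $C_2$ is impossible. Part (1) holds for all $p$, reflecting the purely group-theoretic nature of the obstruction.

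For parts (2) and (3), I would invoke tame ramification since the excluded types there all have $g=1$ (total ramification up to an unramified piece) and appear precisely when $p \nmid e$. Part (3), for $p \equiv 2 \pmod 3$, is the cleanest: the types $(3,1,2)$ and $(6,1,1)$ are exactly two of the three types already excluded for $C_6$ in Proposition~\ref{prop:C6ram}, and the proof there—extracting a totally and tamely ramified $C_3$-subextension and applying Proposition~\ref{prop:newton} to force $3 \mid p-1$—applies verbatim here because it only uses the local structure, not the global Galois group being $C_6$ versus $S_3$. (Note that $(3,2,1)$ is \emph{not} excluded in part (3), consistent with it being allowed for $S_3$ when $p \equiv 2 \pmod 3$.) Part (2), for $p \equiv 1 \pmod 6$, is where Lemma~\ref{lem:p16_cyclic} does the work: the types $(3,2,1)$ and $(6,1,1)$ both have $g=1$ and thus give $D_{\mathfrak p} \simeq S_3$ as a degree-$6$ Galois extension of $\bQ_p$, but Lemma~\ref{lem:p16_cyclic} states that every such extension is cyclic, an immediate contradiction.

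The main subtlety I would watch for is making sure each excluded type is matched to the correct obstruction and that no type is spuriously excluded—in particular, verifying that the types left \emph{allowed} in each part (such as $(3,2,1)$ for $p \equiv 2 \pmod 3$, or $(3,1,2)$ for $p \equiv 1 \pmod 6$) genuinely can occur, so that the corollary is sharp. This amounts to checking that the local obstructions I use are the only ones: for $p \equiv 2 \pmod 3$ the $C_3$-inertia argument rules out totally ramified $C_3$-pieces but leaves room for an unramified $C_3$ sitting under a tame $C_2$ (giving $(3,2,1)$ realized inside $S_3$ with the Frobenius acting nontrivially), while for $p \equiv 1 \pmod 6$ Lemma~\ref{lem:p16_cyclic} only constrains the $g=1$ types. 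I expect the bookkeeping against Table~\ref{tab:sexticeuler}, rather than any single hard step, to be the bulk of the work; each individual exclusion is a short appeal to an already-established proposition or lemma.
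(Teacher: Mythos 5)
Your parts (1) and (2) are correct and essentially identical to the paper's proof: part (1) is pure group theory (you exclude $(2,3,1)$ via the absence of a normal $C_2$ in $S_3$, while the paper notes $S_3$ has no $C_3$ quotient --- equivalent observations), and part (2) is exactly the paper's appeal to Lemma~\ref{lem:p16_cyclic}. The exclusion of $(3,1,2)$ in part (3) via Proposition~\ref{prop:newton} is also fine and matches the paper.

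The gap is in your treatment of $(6,1,1)$ in part (3). You claim the proof of Proposition~\ref{prop:C6ram} ``applies verbatim because it only uses the local structure, not the global Galois group being $C_6$ versus $S_3$.'' This is not so: when $g=1$ the decomposition group \emph{is} the global Galois group, so here $K_{\mathfrak p}/\bQ_p$ is a totally ramified $S_3$-extension rather than a $C_6$-extension, and the key step of Proposition~\ref{prop:C6ram} --- passing to the unique degree-$3$ subextension $\tilde K/\bQ_p$, which there is a \emph{Galois} $C_3$-extension --- has no analogue: $S_3$ has no normal subgroup of order $2$ (a fact you yourself invoked in part (1)), so every cubic subfield of $K_{\mathfrak p}$ is non-Galois over $\bQ_p$, and Proposition~\ref{prop:newton}, which is stated for abelian extensions, does not apply to it. Nor could any such argument work: totally and tamely ramified non-Galois cubic extensions such as $\bQ_p(\sqrt[3]{p})$ exist for every $p \equiv 2 \pmod 3$, so the mere existence of a totally ramified cubic subfield yields no contradiction. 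The paper instead argues directly that, since $f=1$, the inertia group would be all of $S_3$, whereas the inertia group of a tamely ramified extension must be cyclic. Alternatively, your idea can be repaired by applying Proposition~\ref{prop:newton} one level up: $K_{\mathfrak p}/K_{\mathfrak p}^{A_3}$ is a totally and tamely ramified $C_3$-extension whose base field $K_{\mathfrak p}^{A_3}$ is totally ramified quadratic over $\bQ_p$ and hence has residue field $\bF_p$, forcing $3 \mid p-1$, a contradiction (this variant even covers $p=2$, where tameness of the full degree-$6$ extension is not automatic) --- but that is an adaptation, not a verbatim transfer.
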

\begin{proof}

1) The cases \( (e, f, g) = (1, 6, 1) \) and \( (2, 3, 1) \) cannot occur for any prime \( p \).  
Indeed, in either case,  \(G= D_\mathfrak{p} \) (for \( \mathfrak{p} \mid p \)) and \( D_\mathfrak{p} / I_\mathfrak{p} \simeq C_6 \) or \( C_3 \).  
However, this is impossible since \( S_3 \) admits no such quotients.

2) When $p \equiv 1 \pmod{6}$, Lemma \ref{lem:p16_cyclic} implies that the decomposition group cannot have order 6 with $G \cong S_3$, i.e. $ef \ne 6$. This excludes $(3, 2, 1)$ and $(6, 1, 1)$.

3) Lastly, assume \( p \equiv 5 \pmod{6} \).  
Then the decomposition type \( (e, f, g) = (6, 1, 1) \) is not possible, since the inertia group of a tamely ramified extension must be cyclic, and \( C_6 \) is not a subgroup of \( S_3 \).  
If \( (e, f, g) = (3, 1, 2) \), then \( K_\mathfrak{p} / \mathbb{Q}_p \) is a totally ramified \( C_3 \)-extension, which cannot exist when \( p \equiv 5 \pmod{6} \), as explained in the proof of Proposition~\ref{prop:C6ram}.

 For $p = 2$, assume it has a decomposition type of $(e, f, g) = (3, 1, 2)$.
Since $2 \nmid 3$, the corresponding local extension $K_\frp / \bQ_2$ is tamely ramified.
By Proposition \ref{prop:newton}, we should have $e \mid p - 1$, a contradiction.
If it has decomposition type $(e, f, g) = (6, 1, 1)$, one can consider a cubic subextension of the corresponding local extension, which reduces to the case $(e, f) = (3, 1)$.

\end{proof}

By Proposition~\ref{prop:C6ram}, Corollary~\ref{cor:S3ram}, and Table~\ref{tab:sexticeuler}, we obtain the following result.
\begin{corollary}
    \label{cor:sextic_zc}
    Let $K / \bQ$ be a sextic Galois extension and $p \ge 5$ be a prime.
    \begin{enumerate}
        \item If $p \equiv 1 \Mod{6}$, then the set of possible values of $a_{p^2}(K)$ is given by 
        \begin{equation}
            a_{p^2}(K) \in \begin{cases} \{0, 1, 3, 6, 21\} & \text{ if } \Gal(K/\bQ) \simeq C_6,  \\ \{0, 3, 6, 21\} & \text{ if } \Gal(K / \bQ) \simeq S_3. \end{cases}
        \end{equation}
        Especially, if there exists a prime $p$ with $p \equiv 1 \Mod{6}$ and $a_{p^2}(K) = 1$, then $\Gal(K / \bQ) \simeq C_6$.
        \item If $p \equiv 2 \Mod{3}$, then the set of possible values of $a_{p^2}(K)$ is given by
        \begin{equation}
            a_{p^2}(K) \in \begin{cases} \{0, 3, 6, 21\} & \text{ if }  \Gal(K/\bQ) \simeq C_6,\\ \{0, 1, 3, 6, 21\} & \text{ if } \Gal(K / \bQ) \simeq S_3. \end{cases}
        \end{equation}
        Especially, if there exists a prime $p$ with $p \equiv 5 \Mod{6}$ and $a_{p^2}(K) = 1$, then $\Gal(K / \bQ) \simeq S_3$.
        \item The set of possible values of $a_{p^3}(K)$ is given by
        \begin{equation}
            a_{p^3}(K) \in \begin{cases}
                \{0, 1, 2, 4, 10, 56\} & \text{ if } \Gal(K / \bQ) \simeq C_6,\\
                \{0, 2, 4, 10, 56\} & \text{ if }  \Gal(K / \bQ) \simeq S_3.
            \end{cases}
        \end{equation}
        Especially, if there exists a prime $p$ with $a_{p^3}(K) = 1$, then $\Gal(K / \bQ) \simeq C_6$.
    \end{enumerate}
\end{corollary}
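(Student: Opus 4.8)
The plan is to reduce the statement entirely to the decomposition-type analysis already recorded in Table~\ref{tab:sexticeuler}, Proposition~\ref{prop:C6ram}, and Corollary~\ref{cor:S3ram}. For a fixed sextic Galois extension $K/\bQ$ and a prime $p \ge 5$, each of $a_{p^2}(K)$ and $a_{p^3}(K)$ is completely determined by the decomposition type $(e,f,g)$ of $p$, and the value is read off directly from the corresponding column of Table~\ref{tab:sexticeuler}. Thus the whole corollary amounts to determining, for each Galois group and each relevant residue class of $p$ modulo $6$, exactly which of the nine decomposition types can occur, and then collecting the associated table entries. Throughout I would use that $p \ge 5$ gives $(p,6)=1$, so any ramified $K_\frp/\bQ_p$ is tame and the earlier propositions apply.

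First I would treat $a_{p^2}$, handling each Galois group and congruence class separately. For $\Gal(K/\bQ)\simeq C_6$ with $p \equiv 1 \pmod 6$, no type is excluded, so the admissible $a_{p^2}$-values fill out the full column $\{0,1,3,6,21\}$; for $p \equiv 2 \pmod 3$, Proposition~\ref{prop:C6ram} removes $(3,1,2),(3,2,1),(6,1,1)$, eliminating exactly the value $1$ and leaving $\{0,3,6,21\}$. For $\Gal(K/\bQ)\simeq S_3$, Corollary~\ref{cor:S3ram}(1) always removes $(1,6,1),(2,3,1)$; when $p \equiv 1 \pmod 6$ part (2) additionally removes $(3,2,1),(6,1,1)$, leaving $\{0,3,6,21\}$, whereas when $p \equiv 2 \pmod 3$ part (3) removes $(3,1,2),(6,1,1)$, so the surviving type $(3,2,1)$ contributes $1$ and we obtain $\{0,1,3,6,21\}$. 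The two ``Especially'' assertions then follow by inspection: among all types, the value $1$ arises only from $(3,2,1)$ and $(6,1,1)$, and in each congruence class exactly one of the two Galois groups admits a type producing it.

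For part (3) I would argue uniformly over all $p \ge 5$. For $C_6$, every type occurs for some admissible $p$ (those excluded when $p \equiv 2 \pmod 3$ still occur when $p \equiv 1 \pmod 6$), so the $a_{p^3}$-column yields $\{0,1,2,4,10,56\}$. For $S_3$ the decisive observation is that $(2,3,1)$ is excluded unconditionally by Corollary~\ref{cor:S3ram}(1), while $(6,1,1)$ is excluded for $p \equiv 1 \pmod 6$ by part (2) and for $p \equiv 2 \pmod 3$ by part (3); since $p \ge 5$ forces $p \equiv 1$ or $5 \pmod 6$, the type $(6,1,1)$ never occurs. As $(2,3,1)$ and $(6,1,1)$ are the only types giving $a_{p^3}=1$, the value $1$ is unavailable for $S_3$, and the remaining admissible types produce $\{0,2,4,10,56\}$, from which the final ``Especially'' statement is immediate.

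Finally I would note what the equalities ``the set of possible values is given by'' require beyond the above. The inclusions $\subseteq$ are precisely the content just described and already suffice for all three ``Especially'' applications, which carry the substantive mathematical consequences. Upgrading to genuine equalities additionally requires exhibiting, for each admissible type, an actual $K$ of the prescribed Galois group and a prime $p$ in the relevant class realizing that type; this is a routine existence statement supplied by the Chebotarev density theorem applied to the relevant splitting field, combined with the cyclotomic condition $p \equiv 1 \pmod 6$ or $p \equiv 2 \pmod 3$. I expect no real obstacle: the only steps demanding care are keeping the congruence classes straight (and invoking $p \ge 5$ to guarantee tameness, so that Proposition~\ref{prop:newton} propagates through Proposition~\ref{prop:C6ram} and Corollary~\ref{cor:S3ram}) and spotting the combined exclusion of $(6,1,1)$ for $S_3$ across both classes in part (3).
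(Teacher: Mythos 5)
Your proposal is correct and follows essentially the same route as the paper, which derives the corollary precisely by combining Table~\ref{tab:sexticeuler} with the exclusions in Proposition~\ref{prop:C6ram} and Corollary~\ref{cor:S3ram}; your case-by-case bookkeeping (including the key observation that $(2,3,1)$ and $(6,1,1)$ — the only types giving $a_{p^3}=1$ — are excluded for $S_3$ in every residue class with $p\ge 5$) is exactly the intended argument. Your closing remark that genuine set equality would further require realizing each decomposition type is a fair point the paper leaves implicit, but the inclusions alone carry all the ``Especially'' conclusions, which is all the paper uses.
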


\begin{example}[$C_6$, sufficient condition]
\label{ex:sextic_zeta_branch}
    Consider the bold branch in Figure \ref{fig:sextic_zeta}.
    If a sextic field satisfies the inequalities, then we have $ 0 < a_{19^2} \le  2$, hence $a_{19^2} = 1$  since $a_{19^2}$ cannot be 2 (Table \ref{tab:sexticeuler}).
    Since $19 \equiv 1 \pmod{6}$, Corollary \ref{cor:sextic_zc} implies that the field has to be a $C_6$-extension, which shows that the branch always gives a correct prediction.
\end{example}

One may ask whether Corollary \ref{cor:sextic_zc} can distinguish between $C_6$- and $S_3$-extensions by searching for primes $p$ such that $a_{p^2}(K)=1$. Table \ref{tab:galois_6} shows that $a_{p^2}(K)=1$ occurs only when the decomposition type of $p$ is $(3,2,1)$ or $(6,1,1)$, implying that $p$ must ramify in $K$. Consequently, for any Galois sextic extension $K$, verifying that $a_{p^2}(K)=1$ for a ramifying prime $p$ with $p \equiv 1 \pmod{6}$ suffices to conclude that $K$ is a $C_6$-extension. Table \ref{tab:sextic_psq_dist} presents the distribution of sextic fields according to the value of $a_{p^2}(K)$ for a fixed prime $p$ and Galois group $G\in \{C_6,S_3\}$. For example, about 7\% of $C_6$-extensions  on \texttt{LMFDB} satisfy $a_{19^2}(K)=1$. In such cases, the above procedure correctly identifies the Galois group.

\paragraph{Logistic regression}

Using the first 1000 zeta coefficients (resp. with standard normalization), the logistic regression model achieves an accuracy of 98.40\% (resp. 97.40\%) (Table~\ref{tab:galois_6}).   Table~\ref{tab:sextic_lr_weights} lists the indices with the largest and smallest weights, along with their corresponding values. Notably, most of the indices with large weights (except for 52 among the top 10) are products of prime powers with exponents that are multiples of 2 or 3.

\begin{table}[h]
    \begin{center}
        \resizebox{\columnwidth}{!}{%
        \begin{tabular}{c|c|c|c|c|c|c|c|c|c|c}
            \toprule
              top & \textbf{125} (14.4) & \textbf{25} (12.0) & \textbf{4} (7.8) & \textbf{512} (7.4) & 128 (6.9) & 32 (6.3) & \textbf{27} (5.5) & \textbf{100} (4.8) & \textbf{36} (4.3) & \textbf{196} (4.2) \\
            \cline{1-11}
             bottom & \textbf{625} (-20.9) & \textbf{16} (-15.5) & 243 (-9.5) & \textbf{64} (-9.3) & 496 (-6.6) & 5 (-6.4) & 112 (-6.0) & 304 (-6.0) & \textbf{400} (-5.9) & 48 (-5.9) \\
            \bottomrule
        \end{tabular}
        }
        \caption{Indices of the 10 largest and smallest weights (with actual weight values in parentheses) among 1000 logistic regression coefficients used to classify Galois groups of sextic fields  with normalized zeta coefficients. The bold numbers correspond to indices that are products of prime powers with exponents that are multiples of 2 or 3.}
        \label{tab:sextic_lr_weights}
    \end{center}
\end{table}

As in the case of quartic and nonic fields (Section~\ref{subsec:galois_quartic_nonic}), we also compared the distribution of the absolute values of model weights for indices divisible by squares or cubes with the overall distribution.  
We found that the former is, on average, higher than the latter (Figure~\ref{fig:galois_6_10_lr_weights} (a) and (b)).  
The average absolute value of all weights is  0.63, while the average for indices divisible by square or cube factors is  3.94.
This indicates that the logistic regression model aligns with the decision tree model in identifying the importance of such features.

\subsubsection{Learning Galois groups from polynomial coefficients}
\label{sec:sextic_poly}

As in the case of quartic and nonic fields (Section \ref{subsec:galois_quartic_nonic}), the decision tree model achieved a high accuracy of  97.56\% with polynomial coefficients, although the trained model is very complicated and not easy to interpret the meaning of nodes.
However, we found that it tends to classify a given sextic field as a $S_3$-extension when coefficients are ``small''.
For example, there are 125,734 fields in the training set, where 17,029 (resp. 108,785) of them have a Galois group $C_6$ (resp. $S_3$).
The first node of the tree asks whether $c_2 \le 520,127$ or not, and 108442 of $S_3$-extensions (99.76\% among 108785 $S_3$-extensions) satisfy the condition, while 5831 $C_6$-extensions (34.24\% among 17,029 $C_6$-extensions) satisfy the opposite condition of $c_2 > 520,127$.
Table~\ref{tab:sextic_nonab_poly} shows the mean and median of the polynomial coefficients \( c_0, c_1, \dots, c_5 \) for each Galois group.  
The magnitudes \( |c_0|, |c_1|, \dots, |c_5| \) are, on average, much larger for \( C_6 \)-extensions.

In contrast, the logistic regression model performed very poorly when using polynomial coefficients, achieving an accuracy of only  26.12\%. Similarly to the discussion at the end of Section \ref{subsubsec:49_poly}, this is due to the large magnitude of polynomial coefficients. After normalization the model achieves  86.73\% accuracy, which is slightly above the majority baseline (86.55\%, Table \ref{tab:nf_count_galois}). In particular, the balanced accuracy is only 50.72\%, where the confusion matrix (d) of Figure \ref{fig:galois_6_cm} shows that logistic regression models' predictions are highly biased towards $S_3$.

\begin{table}[h]
    \begin{center}
        \resizebox{\columnwidth}{!}{%
        \begin{tabular}{c|c|c|c|c|c|c|c|c|c|c|c|c}
            \toprule
             & \multicolumn{2}{c|}{$c_0$} & \multicolumn{2}{c|}{$c_1$} & \multicolumn{2}{c|}{$c_2$} & \multicolumn{2}{c|}{$c_3$} & \multicolumn{2}{c|}{$c_4$} & \multicolumn{2}{c}{$c_5$}  \\
            \midrule
            $C_6$ & $4.4 \cdot 10^{17}$ & $1.5 \cdot 10^5$ & $-3.7 \cdot 10^{14}$ & $-2.3 \cdot 10^3$ & $7.6 \cdot 10^{10}$ & $1.3 \cdot 10^5$ & $9.1 \cdot 10^6$ & $-2$ & $-3.1 \cdot 10^3$ & $-77$ & $-1.1$ & $-1$ \\
            \midrule
            $S_3$ & $2.2 \cdot 10^{9}$ & $7.2 \cdot 10^5$ & $-2.4 \cdot 10^{6}$ & $0$ & $2.0 \cdot 10^{5}$ & $2.3 \cdot 10^4$& $8.6 \cdot 10^2$ & $0$ & $-1.4 \cdot 10^2$ & $-50$ & $-0.7$ & $0$ \\
            \bottomrule
        \end{tabular}
        }
        \caption{Mean (left) and median (right) of polynomial coefficients of sextic extensions.}
        \label{tab:sextic_nonab_poly}
    \end{center}
\end{table}

Table \ref{tab:galois_6} summarizes the results for sextic extensions.

\subsection{Decic fields}

As in the sextic case, there are two possible Galois groups for decic Galois extensions \( K/\mathbb{Q} \): \( C_{10} \) (cyclic, abelian) and \( D_5 \) (dihedral, nonabelian).  
There are few prior works that focus on computing Galois groups of decic extensions, but several studies consider number fields of higher degrees, including~\cite{kluners2001database, fieker2014computation, cohen2007number}.

\subsubsection{Learning Galois groups from zeta coefficients}

\paragraph{Decision tree}

\begin{figure}[t]
    \centering
    \footnotesize
    \begin{forest}
      label L/.style={edge label={node[midway,left,font=\scriptsize]{#1}}},
      label R/.style={edge label={node[midway,right,font=\scriptsize]{#1}}},
      for tree={font=\normalsize,forked edge, child anchor=north, for descendants={edge=->}}
      [{$a_{2^6} \le 17$}, draw
        [{$a_{2^5 \cdot 3^2} \le 7$}, draw, label L=Y, edge=very thick
          [{$a_{2^5 \cdot 5^2} \le 8$}, draw, label L=Y, edge=very thick
            [{$a_{31^2} \le 4$}, draw, label L=Y, edge=very thick
              [{${\cdots}$}, rectangle, draw, label L=Y, tier=bottom]
              [{$a_{2^5} \le 1$}, draw, label R=N, edge=very thick
                [{$C_{10}$}, rectangle, thick, draw, label L=Y, tier=bottom, line width=1.5pt, edge=very thick, fill=gray!40]
                [{${\cdots}$}, rectangle, draw, label R=N, tier=bottom]
              ]
            ]
            [{$a_{3^5} \le 1$}, draw, label R=N, edge=very thick
              [{$C_{10}$}, rectangle, thick, draw, label L=Y, tier=bottom, line width=1.5pt]
              [{$a_{2^5} \le 1$}, draw, label R=N, edge=very thick
                [{$C_{10}$}, rectangle, thick, draw, label L=Y, tier=bottom, line width=1.5pt, edge=very thick, fill=gray!40]
                [{${\cdots}$}, rectangle, draw, label R=N, tier=bottom]
              ]
            ]
          ]
          [{$a_{2^5} \le 1$}, draw, label R=N, edge=very thick
            [{$C_{10}$}, rectangle, thick, draw, label L=Y, tier=bottom, line width=1.5pt, edge=very thick, fill=gray!40]
            [{${\cdots}$}, rectangle, draw, label R=N, tier=bottom]
          ]
        ]
        [{$a_{3^6} \le 17$}, draw, label R=N
          [{$a_{3^5} \le 1$}, draw, label L=Y
            [{$C_{10}$}, rectangle, thick, draw, label L=Y, tier=bottom, line width=1.5pt]
            [{${\cdots}$}, rectangle, draw, label R=N, tier=bottom]
          ]
          [{$a_{2 \cdot 3^4} \le 5362$}, draw, label R=N
            [{$a_{149} \le 7.5$}, draw, label L=Y
              [{$D_5$}, rectangle, thick, draw, label L=Y, tier=bottom, line width=1.5pt]
              [{$a_{2^2 \cdot 229} \le 37$}, draw, label R=N
                [{$D_5$}, rectangle, thick, draw, label L=Y, tier=bottom, line width=1.5pt]
                [{$a_{2^2 \cdot 3^2} \le 50$}, draw, label R=N
                  [{$C_{10}$}, rectangle, thick, draw, label L=Y, tier=bottom, line width=1.5pt]
                  [{$D_5$}, rectangle, thick, draw, label R=N, tier=bottom, line width=1.5pt]
                ]
              ]
            ]
            [{$C_{10}$}, rectangle, thick, draw, label R=N, tier=bottom, line width=1.5pt]
          ]
        ]
      ]
    \end{forest}
    \caption{A decision tree predicts Galois groups of decic fields from zeta coefficients up to $a_{1000}(K)$.}
    \label{fig:decic_zeta}
\end{figure}

A decision tree model trained on the first 1000 zeta coefficients achieves an accuracy of 97.19\% in classifying the Galois groups of decic fields.  
Figure~\ref{fig:decic_zeta} shows the resulting tree, where the top nodes correspond to indices divisible by squares or fifth powers.  
Based on this observation, we trained a decision tree using only the 114 indices up to \( n = 1000 \) that are divisible by squares or fifth powers, and obtained a higher accuracy of 98.56\%.

As in previous cases, we analyze the possible values of the zeta coefficients based on the decomposition types of primes.
 In particular, by performing EDA as in the sextic extension case, we found that $a_{p^2}(K)$ can be 1 only if $K$ has a certain Galois group, \emph{depending on $p \equiv 1 \pmod{10}$}.
Table~\ref{tab:deciceuler} lists the Euler factors and the $p^2$- and  $p^5$-th zeta coefficients corresponding to each possible decomposition type.
 The observation is proved in Corollary \ref{cor:decic_zeta}, and this can be used to prove that the bold branches of Figure \ref{fig:decic_zeta} give \emph{provably correct predictions} (see Example \ref{ex:decic_zeta_branch}).

\begin{table}[h]
    \begin{center}
        \begin{tabular}{c|c|c|c|c}
            \toprule
            $(e, f, g)$ & Euler factor & $a_{p}(K)$ & $a_{p^2}(K)$ & $a_{p^5}(K)$   \\
            \midrule
            $(1, 1, 10)$ & $(1 - p^{-s})^{-10} = \sum_{k \ge 0} \binom{k+9}{9} p^{-ks}$ & 10 & 55 & 2002 \\
            $(1, 2, 5)$ & $(1 - p^{-2s})^{-5} = \sum_{k \ge 0} \binom{k+4}{4} p^{-2ks}$ & 0 & 5 & 0\\
            $(1, 5, 2)$ & $(1 - p^{-5s})^{-2} = \sum_{k \ge 0} (k+1)p^{-5ks}$ & 0 & 0 & 2\\
            $(1, 10, 1)$ & $(1 - p^{-10s})^{-1} = \sum_{k \ge 0} p^{-10ks}$ & 0 & 0 & 0 \\
            $(2, 1, 5)$ & $(1 - p^{-s})^{-5} = \sum_{k \ge 0} \binom{k+4}{4} p^{-ks}$ & 5 &  15 & 126 \\
            $(2, 5, 1)$ & $(1 - p^{-5s})^{-1} = \sum_{k \ge 0} p^{-5ks}$ & 0 & 0 & 1 \\
            $(5, 1, 2)$ & $(1 - p^{-s})^{-2} = \sum_{k \ge 0} (k+1)p^{-ks}$ & 2 & 3 & 6 \\
            $(5, 2, 1)$ & $(1 - p^{-2s})^{-1} = \sum_{k \ge 0} p^{-2ks}$ & 0 & 1 & 0 \\
            $(10, 1, 1)$ & $(1 - p^{-s})^{-1} = \sum_{k \ge 0} p^{-ks}$ & 1 & 1 & 1\\
            \bottomrule
        \end{tabular}
        \caption{Decomposition types, Euler factors, and $p$, $p^2$, $p^5$-th zeta coefficients of decic extensions.}
        \label{tab:deciceuler}
    \end{center}
\end{table}

\begin{proposition}
\label{prop:C10ram}
    Let $K / \bQ$ be a decic Galois extension with Galois group $\Gal(K / \bQ) \simeq C_{10}$ and $p$ be a rational prime.
Denote by $(e, f, g)$ the decomposition type of $p$ in $K$.
    If $p = 2$ or $p \equiv 3, 7, 9 \pmod{10}$, then $(e, f, g) \not\in  \{(5, 1, 2), (5, 2, 1), (10, 1, 1)\}$.
\end{proposition}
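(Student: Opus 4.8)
The plan is to rule out each of the three types by passing to a prime $\frp \mid p$ and analyzing the local extension $K_\frp/\bQ_p$, following the template of Propositions~\ref{prop:C6ram} and~\ref{prop:C9ram}. The key uniform feature is that each of $(5,1,2)$, $(5,2,1)$, $(10,1,1)$ has $5 \mid e$, so the inertia group $I_\frp$ contains the unique order-$5$ subgroup of $C_{10}$, and there is a totally ramified $C_5$-step inside $K_\frp$. I would first record that the hypothesis ``$p = 2$ or $p \equiv 3,7,9 \pmod{10}$'' is equivalent to ``$p \ne 5$ and $p \not\equiv 1 \pmod 5$''; since a totally and tamely ramified $C_5$-extension over a local field with residue field $\bF_q$ forces $5 \mid q - 1$ by Proposition~\ref{prop:newton}, this is exactly the obstruction I want to trigger.

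For the two totally ramified types I would argue directly. In type $(5,1,2)$ we have $D_\frp = I_\frp \simeq C_5$, so $K_\frp/\bQ_p$ is totally and (as $p \ne 5$) tamely ramified of degree $5$; Proposition~\ref{prop:newton} gives $5 \mid p - 1$, contradicting $p \not\equiv 1 \pmod 5$. Type $(10,1,1)$ is analogous for odd $p$, where $10 \mid p - 1$ already forces $5 \mid p - 1$. The only wrinkle is $p = 2$ in type $(10,1,1)$: there $e = 10$ is divisible by $p$, so $K_\frp/\bQ_2$ is wildly ramified and Proposition~\ref{prop:newton} does not apply to it directly. I would instead pass to the maximal tamely ramified subextension $L^t = K_\frp^{G_1}$, where the wild inertia $G_1$ is the order-$2$ subgroup of $C_{10}$; then $\Gal(L^t/\bQ_2) \simeq C_{10}/C_2 \simeq C_5$ and $L^t/\bQ_2$ is totally and tamely ramified of degree $5$, so Proposition~\ref{prop:newton} yields $5 \mid 2 - 1 = 1$, a contradiction.

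I expect type $(5,2,1)$ to be the main obstacle. Here $D_\frp \simeq C_{10}$ and $I_\frp \simeq C_5$, and the maximal unramified subextension has residue field $\bF_{p^2}$; applying Proposition~\ref{prop:newton} to the totally ramified $C_5$-extension over that subfield only gives $5 \mid p^2 - 1$, which is \emph{satisfied} precisely when $p \equiv 4 \pmod 5$ (that is, $p \equiv 9 \pmod{10}$), so this route fails to close the case. To remedy this I would exploit that $D_\frp \simeq C_{10}$ is \emph{abelian}, combined with the Frobenius action of Proposition~\ref{prop:tamefrobaction}, exactly as in Lemma~\ref{lem:p16_cyclic}: taking $\tau$ a generator of the tame inertia $I_\frp$ and $\sigma$ a lift of Frobenius, commutativity gives $\tau = \sigma \tau \sigma^{-1} = \tau^p$, so $5 = \mathrm{ord}(\tau) \mid p - 1$, again contradicting $p \not\equiv 1 \pmod 5$. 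The delicate point is thus recognizing that the naive residue-field bound is too weak for $p \equiv 9 \pmod{10}$, and that the abelianness of the decomposition group is what rescues the argument.
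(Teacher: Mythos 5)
Your proof is correct, but it takes a partly different route from the paper's. The paper splits by congruence class rather than by decomposition type: for $p = 2$ it simply cites the \texttt{LMFDB} tables of $2$-adic fields (there is no $C_5$-extension of $\bQ_2$ with $(e,f) = (5,1)$, and no $C_{10}$-extension with $(e,f) = (5,2)$ or $(10,1)$), while for $p \equiv 3, 7, 9 \pmod{10}$ it gives one uniform argument covering all three types: in each case $K_\frp$ contains a totally ramified $C_5$-subextension $\tilde{K}$ over $\bQ_p$ (for $(5,1,2)$ this is $K_\frp$ itself; for $(5,2,1)$ and $(10,1,1)$ it is the fixed field of the order-$2$ subgroup of $D_\frp \simeq C_{10}$, onto whose $C_5$-quotient the inertia group maps isomorphically), and Proposition~\ref{prop:newton} applied to the tame abelian extension $\tilde{K}/\bQ_p$ forces $5 \mid p - 1$. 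You organize things the other way around: you dispatch $(5,1,2)$ and $(10,1,1)$ directly with Proposition~\ref{prop:newton}, handle the delicate type $(5,2,1)$ by the Frobenius-conjugation argument of Proposition~\ref{prop:tamefrobaction} plus commutativity of $D_\frp$ (in the style of Lemma~\ref{lem:p16_cyclic}), and replace the database check at $p = 2$ with theory, noting that types with $e = 5$ remain tame there and passing to the maximal tame subextension $K_\frp^{G_1}$ for $(10,1,1)$, where $G_1 = C_2$ is the wild inertia. Both mechanisms for $(5,2,1)$ ultimately exploit abelianness, and your diagnosis is exactly right that the naive residue-field bound $5 \mid p^2 - 1$ fails to exclude $p \equiv 9 \pmod{10}$ --- this is precisely why $(5,2,1)$ is \emph{not} excluded for $D_5$-extensions in Proposition~\ref{prop:D5ram}. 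What your version buys is a fully self-contained proof with no \texttt{LMFDB} citation (and, implicitly, a cleaner explanation of why the $p = 2$ exclusions hold); what the paper's buys is brevity, treating all three types in a single sentence at the cost of an external database appeal at $p = 2$.
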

\begin{proof}
    The proof is similar to that of Proposition \ref{prop:C6ram} and Corollary \ref{cor:S3ram}.
    
    For $p = 2$ and $(e, f, g) = (5, 1, 2)$, assume such extension exists.
    Since $2 \nmid 5$, the corresponding local extension $K_\frp / \bQ_2$ is tamely ramified, and Proposition \ref{prop:newton} implies $e \mid p - 1$, a contradiction.
    For $(e, f, g) = (5, 2, 1)$ or $(10, 1, 1)$, one can consider the unique quintic subextension of the local extension, and reduce to the previous case $(e, f) = (5, 1)$.

    For $p \equiv 3,7,9 \pmod{10}$, assume $(e, f, g) \in \{(5, 1, 2), (5, 2, 1), (10, 1, 1)\}$.
    Then the corresponding extension $K_\frp$ of $\bQ_p$ (for some prime $\frp | p$) is totally ramified $C_{5}$- or $C_{10}$-extension. In either case, there exists a totally ramified $C_{5}$-subextension $\tilde K \subset K_\frp$.
    Since $(p, 5) = 1$, the extension $\tilde K / \mathbb Q_p$ is tamely ramified, and Proposition \ref{prop:newton} implies that $p \equiv 1 \pmod{5}$, which contradicts the assumption $p \equiv 3,7,9 \pmod{10}$.
\end{proof}

\begin{lemma}
    \label{lem:p110_cyclic}
    Let $p \equiv 1 \pmod{10}$ be a prime. Then any decic Galois extension of $\bQ_p$ is cyclic.    
\end{lemma}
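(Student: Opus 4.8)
The plan is to mirror the argument of Lemma~\ref{lem:p16_cyclic} almost verbatim, replacing the non-abelian group $S_3$ with $D_5$. Suppose for contradiction that $K/\bQ_p$ is a decic Galois extension with $G = \Gal(K/\bQ_p) \simeq D_5$. Since $(p,10)=1$, the extension is tamely ramified, so the inertia group $I = G_0$ is cyclic, and the quotient $G/I$, being isomorphic to the Galois group of the residue field extension (generated by Frobenius), is also cyclic. As we are over a local field we have $g=1$, so the ramification and inertia degrees satisfy $ef = 10$, and $G$ fits into a short exact sequence
\[
1 \to C_e \to G \to C_f \to 1,
\]
with $C_e \simeq I$ the cyclic inertia group and $C_f \simeq G/I$.

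First I would carry out the purely group-theoretic classification of which pairs $(e,f)$ can occur. The normal subgroups of $D_5$ are exactly $\{1\}$, the rotation subgroup $C_5$, and $D_5$ itself; of these, only $\{1\}$ and $C_5$ are cyclic. Since $I$ must be a normal cyclic subgroup (a ramification group, cyclic by tameness), the choice $I=\{1\}$ forces $G/I \simeq D_5$, which is not cyclic, and $I = D_5$ is not cyclic at all. Hence the only admissible possibility is $I \simeq C_5$, giving $(e,f) = (5,2)$, with $G/I \simeq C_2$.

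Next, let $\tau$ generate the inertia group $I \simeq C_5$ and let $\sigma$ be a lift of Frobenius. Proposition~\ref{prop:tamefrobaction} gives $\sigma \tau \sigma^{-1} = \tau^p$. The hypothesis $p \equiv 1 \pmod{10}$ implies $p \equiv 1 \pmod 5$, so $\tau^p = \tau$, meaning $\sigma$ and $\tau$ commute. As $G$ is generated by $\sigma$ and $\tau$, it is abelian; the only abelian group of order $10$ is $C_{10}$, contradicting $G \simeq D_5$. Therefore no such $D_5$-extension exists and every decic Galois extension of $\bQ_p$ is cyclic. The one step requiring genuine care — and hence the main obstacle — is the enumeration pinning down $(e,f)=(5,2)$: one must correctly invoke that $I$ is simultaneously normal and cyclic and that $G/I$ is cyclic, and then identify the normal cyclic subgroups of $D_5$. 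Once this is settled, the Frobenius-conjugation computation is immediate.
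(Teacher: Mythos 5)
Your proof is correct and is exactly the argument the paper intends: the paper's own proof of Lemma~\ref{lem:p110_cyclic} simply says it is ``similar to that of Lemma~\ref{lem:p16_cyclic}'' and omits the details, and your write-up supplies precisely those details (tameness forces the inertia group to be cyclic and normal, pinning down $(e,f)=(5,2)$, after which Proposition~\ref{prop:tamefrobaction} with $p \equiv 1 \pmod 5$ forces $\sigma$ and $\tau$ to commute, ruling out $D_5$). No gaps; your careful enumeration of the normal cyclic subgroups of $D_5$ is a clean way to justify the step that the paper's sextic proof handles by inspection.
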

\begin{proof}
    The proof is similar to that of Lemma \ref{lem:p16_cyclic}.
     If $K / \bQ_p$ is a decic Galois extension with $G = \Gal(K / \bQ_p) \simeq D_{10}$, then the extension is tamely ramified from $(p, 10) = 1$, hence the inertia group $I$ is cyclic. By considering the same exact sequence \eqref{eqn:tameram_exact_seq} in Lemma \ref{lem:p16_cyclic}, we should have $(e, f) = (5, 2)$. Now applying Proposition \ref{prop:tamefrobaction} gives a contradiction. 
\end{proof}

\begin{proposition}
    \label{prop:D5ram}
    Let $K / \bQ$ be a decic $D_5$-extension and $p$ be a rational prime.
    Let $(e, f, g)$ be the decomposition type of $p$ in $K$. Then:
    \begin{enumerate}
        \item If $p = 2$, then $(e, f, g) \not \in \{(1, 10, 1), (2, 5, 1), (5, 1, 2), (5, 2, 1), (10, 1, 1)\}$.
        \item If $p = 5$, then $(e, f, g) \not\in \{(1, 10, 1), (2, 5, 1)\}$.
        \item If $p \equiv 1 \pmod{10}$, then $(e, f, g) \not \in \{(1, 10, 1), (2, 5, 1), (5, 2, 1), (10, 1, 1)\}$.
        \item If $p \equiv 3, 7 \pmod{10}$, then $(e, f, g) \not\in  \{(1, 10, 1), (2, 5, 1), (5, 1, 2), (5, 2, 1), (10, 1, 1)\}$.
        \item If $p \equiv 9 \pmod{10}$, then $(e, f, g) \not\in  \{(1, 10, 1), (2, 5, 1), (5, 1, 2), (10, 1, 1)\}$.
    \end{enumerate}
\end{proposition}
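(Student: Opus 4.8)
The plan is to translate each forbidden decomposition type $(e,f,g)$ into a statement about the decomposition group $D_\frp \le D_5$ and its inertia subgroup $I_\frp \trianglelefteq D_\frp$, and then rule it out using the subgroup structure of $D_5$ together with the ramification facts from Section~\ref{subsec:prelim_tameram}. Recall that $D_\frp$ has order $ef$, that $I_\frp \trianglelefteq D_\frp$ with cyclic quotient $D_\frp/I_\frp \simeq \Gal(\ell/k) \simeq C_f$, and that the normal subgroups of $D_5$ are exactly $\{1\}$, the rotation subgroup $C_5$, and $D_5$ itself; in particular $D_5$ is nonabelian, and its order-$2$ subgroups (generated by reflections) are not normal.

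First I would dispose of the two types that fail for every prime, which are exactly the ones appearing in all five items. For $(e,f,g)=(1,10,1)$ we have $D_\frp=D_5$ and $I_\frp=\{1\}$, forcing $D_5\simeq D_\frp/I_\frp$ to be cyclic, which is false. For $(e,f,g)=(2,5,1)$ we have $D_\frp=D_5$ and $|I_\frp|=2$, but $D_5$ has no normal subgroup of order $2$, contradicting $I_\frp\trianglelefteq D_\frp$. Next I would treat the remaining types, each of which has $ef\in\{5,10\}$ and hence harbors a totally ramified $C_5$-subextension. For $(5,1,2)$ the local field $K_\frp/\bQ_p$ is a totally ramified $C_5$-extension; when $p\ne 5$ it is tame, so Proposition~\ref{prop:newton} forces $p\equiv 1\pmod 5$, excluding $(5,1,2)$ for $p=2$ and for $p\equiv 3,7,9\pmod{10}$. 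For $(10,1,1)$ we have $I_\frp=D_5$: for $p\ne 2,5$ the tame inertia would have to be cyclic, contradicting $D_5$, while for $p=2$ the wild inertia $G_1\trianglelefteq G_0=D_5$ is a normal $2$-group with $G_0/G_1$ of odd order, forcing $|G_1|=2$, again impossible by non-normality of order-$2$ subgroups.

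The delicate case is $(5,2,1)$, where $D_\frp=D_5$ and $I_\frp=C_5$, so the obstruction is not one of degrees alone. For $p\ne 5$ the extension is tame; writing $\tau$ for a generator of the tame inertia $C_5$ and $\sigma$ for a Frobenius lift, the lift $\sigma$ is necessarily a reflection in $D_5$, so the intrinsic group law gives $\sigma\tau\sigma^{-1}=\tau^{-1}$, whereas Proposition~\ref{prop:tamefrobaction} gives $\sigma\tau\sigma^{-1}=\tau^{p}$. Comparing the two forces $p\equiv -1\pmod 5$, i.e.\ $p\equiv 9\pmod{10}$ among the relevant odd primes; this excludes $(5,2,1)$ for $p=2$ and for $p\equiv 1,3,7\pmod{10}$, but leaves it available when $p\equiv 9\pmod{10}$, exactly as the statement demands.

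Finally I would assemble the five items by collecting these congruence conditions. For $p\equiv 1\pmod{10}$ I would streamline the argument by invoking Lemma~\ref{lem:p110_cyclic}: since $D_\frp=D_5$ would realize a noncyclic decic Galois extension of $\bQ_p$, all four types with $ef=10$ are excluded at once. The main obstacle is precisely $(5,2,1)$, since its exclusion requires pinning down the conjugation action of Frobenius on tame inertia rather than merely counting degrees, and so must combine the description of $D_5$ (any Frobenius lift acts by inversion on $C_5$) with Iwasawa's $p$-power formula. The only other subtle point is the wild case $p=2$ for $(10,1,1)$; in contrast to the $C_{10}$ analysis of Proposition~\ref{prop:C10ram}, the absence of a normal order-$2$ subgroup in $D_5$ lets us settle it by pure group theory, avoiding any appeal to \texttt{LMFDB}.
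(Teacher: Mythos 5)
Your proof is correct, and it diverges from the paper's in two substantive places. First, for $p=2$ and $p=5$ the paper simply consults the \texttt{LMFDB} $p$-adic field database, whereas you handle these primes by pure group theory and tame ramification: $(1,10,1)$ and $(2,5,1)$ die for every $p$ because $D_5$ has no cyclic quotient of order $10$ or $5$ (equivalently, no normal subgroup of order $2$), $(5,1,2)$ and $(5,2,1)$ at $p=2$ die by tameness since $5$ is prime to $2$, and $(10,1,1)$ at $p=2$ dies because the wild inertia would be a normal $2$-subgroup of index of odd order in $G_0 = D_5$, hence a normal subgroup of order $2$, which does not exist. Second, and more importantly, for the type $(5,2,1)$ at odd primes the paper applies Proposition~\ref{prop:newton} to the totally tamely ramified $C_5$-extension $K_\frp/\widetilde{K}$ over the unramified quadratic subfield, obtaining only $p^2 \equiv 1 \pmod{5}$; this suffices for $p \equiv 3,7 \pmod{10}$ but not for $p \equiv 1 \pmod{10}$, which the paper must then settle separately via Lemma~\ref{lem:p110_cyclic}. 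You instead combine Proposition~\ref{prop:tamefrobaction} with the dihedral structure: the Frobenius lift lies outside $I_\frp = C_5$, hence is a reflection acting by inversion, so $\tau^p = \tau^{-1}$ forces the sharper congruence $p \equiv -1 \pmod{5}$. This single computation excludes $(5,2,1)$ for $p = 2$ and for $p \equiv 1, 3, 7 \pmod{10}$ all at once, and explains exactly why this type can survive only when $p \equiv 9 \pmod{10}$. The trade-off: the paper's route is shorter and uniform with its treatment of Propositions~\ref{prop:C10ram} and~\ref{prop:D4Q8ram} (which also lean on \texttt{LMFDB}), while yours is fully self-contained, removes the database dependence, and is the natural dihedral analogue of the commutation argument in Lemma~\ref{lem:p16_cyclic}.
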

\begin{proof}

The proof is similar to that of Corollary~\ref{cor:S3ram}.  

First, the case of $(1, 10, 1)$ can be easily excluded for all $p$, since $C_{10}$ cannot arise as a quotient of $D_5$.
Also, if a prime $p$ has decomposition type $(2, 5, 1)$, then the decomposition group is isomorphic to $D_5$ and has a normal subgroup of order $2$. This is impossible, since any such normal subgroup must lie in the center, whereas the center of $D_5$ is trivial as $5$ is odd.

If \( p \equiv 3, 7, 9 \pmod{10} \), then \( (e, f, g) \ne (10, 1, 1) \), since the inertia group of a tamely ramified extension must be cyclic, but \( C_{10} \) is not a subgroup of \( D_5 \).  
Similarly, the decomposition types \( (1, 10, 1) \) and \( (2, 5, 1) \) are excluded, as neither \( C_{10} \) nor \( C_5 \) can occur as a quotient of \( D_5 \).
If \( (e, f, g) = (5, 1, 2) \), then the corresponding extension \( K_\mathfrak{p} / \mathbb{Q}_p \) is a totally and tamely ramified \( C_5 \)-extension and Proposition~\ref{prop:newton} gives $p \equiv 1 \pmod{5}$.
Thus \( (e, f, g) = (5, 1, 2) \) is excluded for $p \equiv 3, 7, 9 \pmod{10}$.
If $(e, f, g) = (5, 2, 1)$, a corresponding $D_5$-extension $K_\frp / \bQ_p$ has an unramified quadratic extension $\widetilde{K} \subset K_\frp$ where $K_\frp / \widetilde{K}$ is a totally and tamely ramified $C_5$-extension.
Since the residue field of $\widetilde{K}$ is $\bF_{p^2}$, Proposition \ref{prop:newton} implies that $p^2 \equiv 1 \pmod{5}$, hence $p \not\equiv 3, 7 \pmod{10}$.
\end{proof}

As a result, we obtain the following information about the values of $a_{p^2}(K)$ and $a_{p^5}(K)$ for a decic field $K$.

\begin{corollary} \label{cor:decic_zeta} \hfill
    \begin{enumerate}
        \item If $p \equiv 1 \Mod{10}$, then the set of possible values of $a_{p^2}(K)$ is given by 
        \begin{equation}
            a_{p^2}(K) \in \begin{cases}
                \{0, 1, 3, 5, 15, 55\} & \text{ if } \Gal(K / \bQ) \simeq C_{10},\\
                \{0, 3, 5, 15, 55\} & \text{ if }  \Gal(K / \bQ) \simeq D_5.
            \end{cases}
        \end{equation}
        Especially, if there exists a prime $p$ with $p \equiv 1 \Mod{10}$ and $a_{p^2}(K) = 1$, then $\Gal(K / \bQ) \simeq C_{10}$.
        \item If $p \equiv 9 \Mod{10}$, then the set of possible values of $a_{p^2}(K)$ is given by
        \begin{equation}
            a_{p^2}(K) \in \begin{cases}
                \{0, 5, 15, 55\} & \text{ if }  \Gal(K / \bQ) \simeq C_{10},\\
                \{0, 1, 5, 15, 55\} & \text{ if }  \Gal(K / \bQ) \simeq D_5.
            \end{cases}
        \end{equation}
        Especially, if there exists a prime $p$ with $p \equiv 9 \Mod{10}$ and $a_{p^2}(K) = 1$, then $\Gal(K / \bQ) \simeq D_{5}$.
        \item For $p \ne 5$, the set of possible values of $a_{p^5}(K)$ is given by
        \begin{equation}
            a_{p^5}(K) \in \begin{cases} \{0, 1, 2, 126, 2002\} & \text{ if }  \mathrm{Gal}(K/\mathbb{Q}) \simeq C_{10},\\ \{0, 2, 126, 2002\} & \text{ if }  \mathrm{Gal}(K / \mathbb{Q}) \simeq D_5.\end{cases}
        \end{equation}
        Especially, if there exists a prime $p \ne 5$ such that $a_{p^5}(K) = 1$, then $\Gal(K / \bQ) \simeq C_{10}$.
    \end{enumerate}

\end{corollary}

\begin{example}[$C_{10}$, sufficient condition]
\label{ex:decic_zeta_branch}
    Assume that a decic extension $K / \bQ$ follows one of the bold branches in Figure \ref{fig:decic_zeta}.
    Then $a_{2^6}(K) \le 17$ and $a_{2^5}(K) \le  1$. If \( (e, f, g) = (1, 2, 5) \), then by Corollary~\ref{cor:prime_pow_zeta_coeff}, we have \( a_{2^6}(K) = 35 > 17 \), so this decomposition type cannot occur.  Furthermore, from Table \ref{tab:deciceuler}, the condition $a_{2^5}(K) \le 1$ implies $(e, f, g) \in \{(1, 10, 1), (2, 5, 1), (5, 2, 1), (10, 1, 1)\}$, which cannot be the case when $K / \bQ$ is a $D_{5}$-extension by Proposition \ref{prop:D5ram}.
    Hence $K / \bQ$ must be a cyclic extension.
\end{example}

\paragraph{Logistic regression}

Using 1000 zeta coefficients (resp. with standard normalization), the logistic regression model achieves an accuracy of 93.81\% (reps. 94.70\%)  (Table~\ref{tab:galois_10}).  
Table~\ref{tab:decic_lr_weights} lists the indices with the largest and smallest weights, along with their corresponding values.  
One can observe that all the top ten indices with largest weights correspond to square indices.
This reflects the importance of square- and quintic-indexed coefficients in the classification.

\begin{table}[H]
    \begin{center}
        \resizebox{\columnwidth}{!}{%
        \begin{tabular}{c|c|c|c|c|c|c|c|c|c|c}
            \toprule
             top & \textbf{25} (3.9) & \textbf{49} (3.4) & \textbf{961} (2.4) & \textbf{121} (2.3) & \textbf{9} (2.3) & \textbf{841} (1.7) & \textbf{529} (1.7) & \textbf{289} (1.6) & \textbf{169} (1.4) & \textbf{361} (1.3) \\
            \cline{1-11}
             bottom & 343 (-2.7) & 31 (-2.1) & 11 (-2.1) & 5 (-1.6) & 125 (-1.6) & 17 (-1.5) & 29 (-1.5) & 23 (-1.3) & 13 (-1.3) & 19 (-1.2) \\
            \bottomrule
        \end{tabular}
        }
        \caption{Indices of the 10 largest and smallest weights (with actual weight values in parentheses) among 1000 logistic regression coefficients used to classify Galois groups of decic fields  with normalized zeta coefficients. The bold numbers correspond to indices that are squares.}
        \label{tab:decic_lr_weights}
    \end{center}
\end{table}

We also compared the distribution of the absolute values of model weights for indices divisible by squares or fifth powers with the overall distribution, and found that the former is higher on average (Figure~\ref{fig:galois_6_10_lr_weights} (c) and (d)).  
The average of all weights is  0.22, while the average of the weights for indices with square or quintic factors is  0.99.
This indicates that the logistic regression model is able to learn the importance of such coefficients for prediction, in agreement with the decision tree model.

\subsubsection{Learning Galois groups from polynomial coefficients}

Using polynomial coefficients, the decision tree model achieved an accuracy of 86.71\%.  
As in previous cases, the model's performance may stem from discrepancies in the distributions of the coefficients across Galois groups (Table~\ref{tab:decic_poly}).  
For example, the first node asks whether \( c_2 \le 8{,}101{,}335 \), and approximately 99\% of the \( D_5 \)-extensions in the training set satisfy this condition.  
In contrast, the logistic regression model achieved only 46.25\% accuracy, which improves to 66.18\% after normalization, close to the proportion of \(D_5\)-extensions. Recall the discussion at the end of Section~\ref{subsubsec:49_poly}.

\begin{table}[h]
% \tiny
\centering
    \resizebox{0.85\columnwidth}{!}{%
    \begin{tabular}{c|c|c|c|c|c|c|c|c|c|c}
        \toprule
         & \multicolumn{2}{c|}{$c_0$} & \multicolumn{2}{c|}{$c_1$} & \multicolumn{2}{c|}{$c_2$} & \multicolumn{2}{c|}{$c_3$} & \multicolumn{2}{c}{$c_4$} \\
        \midrule
        $C_{10}$ & $3.3\cdot 10^{19}$ & $1.8 \cdot 10^5$ & $-1.0 \cdot 10^{18}$ & $-5.0 \cdot 10^5$ & $7.7 \cdot 10^{15}$ & $1.5 \cdot 10^7$ & $3.7 \cdot 10^{13}$ & $0$ & $-5.7 \cdot 10^{11}$ & $7.8 \cdot 10^2$  \\
        \midrule
        $D_5$ & $8.1 \cdot 10^6$ & $8.1 \cdot 10^5$ & $-2.0 \cdot 10^5$ & $0$ & $7.7 \cdot 10^5$ & $1.3 \cdot 10^5$ & $-6.0 \cdot 10^3$ & $0$ & $2.9 \cdot 10^4$ & $1.4 \cdot 10^4$ \\
        \bottomrule
    \end{tabular}
    }
    
    \vspace{0.5em}  % Space between the two blocks

    \resizebox{0.75\columnwidth}{!}{%
    \begin{tabular}{c|c|c|c|c|c|c|c|c|c|c}
        \toprule
          & \multicolumn{2}{c|}{$c_5$} & \multicolumn{2}{c|}{$c_6$} & \multicolumn{2}{c|}{$c_7$} & \multicolumn{2}{c|}{$c_8$} & \multicolumn{2}{c}{$c_9$} \\
        \midrule
        $C_{10}$ & $-2.7 \cdot 10^8$ & $-8.4 \cdot 10^3$ & $1.6 \cdot 10^7$ & $2.5 \cdot 10^4$ & $-4.5 \cdot 10^2$ & $0$ & $-2.5 \cdot 10^2$ & $-55$ & $-1.6$ & $-1$ \\
        \midrule
        $D_5$ & $-1.0 \cdot 10^3$ & $0$ & $1.7 \cdot 10^3$ & $7.0 \cdot 10^2$ & $-9.2$ & $0$ & $-1.2$ & $-4$ & $-1.3$ & $0$ \\
        \bottomrule
    \end{tabular}
    }
    \caption{Mean (left) and median (right) of polynomial coefficients for  decic extensions.}
    \label{tab:decic_poly}
\end{table}

Table \ref{tab:galois_10} summarizes the results for decic extensions.

\subsection{Octic fields}

For degree 8 Galois extensions, there are five possible Galois groups: three  abelian groups ($C_8$, $C_4 \times C_2$ and $C_2^3$) and two are nonabelian groups ($D_4$ and $Q_8$).

There are several works that study the Galois groups of octic fields. Most of these focus on fields defined by specific types of degree~8 polynomials, although they are not limited to Galois extensions. For example, Chen, Chin, and Tan provide a complete classification of possible Galois groups for \emph{doubly even octic polynomials} (i.e., \( f(x) = x^8 + ax^4 + b \)) and \emph{palindromic even octic polynomials} (i.e., \( f(x) = x^8 + ax^6 + bx^4 + ax^2 + 1 \) with \( a \ne 0 \)) by analyzing linear resolvents~\cite{chen2023galois}. Awtrey and Patane generalized their result for palindromic even octic polynomials over \( \mathbb{Q} \) to arbitrary characteristic zero fields~\cite{awtrey2024galois}. However, these results are not enough to give a complete characterization of Galois groups for arbitrary defining polynomials; for example, the quaternion group cannot occur as the Galois group of an octic field defined by a doubly even octic polynomial~\cite[Theorem 3.1]{chen2023galois}.

\subsubsection{Distinguishing between abelian extensions}
\label{subsubsec:octic_ab}

\begin{figure}[t]
    \centering
    \footnotesize
    \begin{forest}
      label L/.style={edge label={node[midway,left,font=\scriptsize]{#1}}},
      label R/.style={edge label={node[midway,right,font=\scriptsize]{#1}}},
      for tree={forked edge, child anchor=north, for descendants={edge=->}}
      [{$a_{2^2 \cdot 3^2 \cdot 5^2} \le 7$}, draw
        [{$a_{5^4} =0$}, draw, label L=Y, edge=very thick
          [{$C_8$}, rectangle, thick, draw, label L=Y, tier=bottom, line width=1.5pt, edge=very thick, fill=gray!40]
          [{$a_{3^4} =0$}, draw, label R=N, edge=very thick
            [{$C_8$}, rectangle, thick, draw, label L=Y, tier=bottom, line width=1.5pt, edge=very thick, fill=gray!40]
            [{${\cdots}$}, rectangle, draw, label R=N, tier=bottom]
          ]
        ]
        [{$a_{23^2} \le 1$}, draw, label R=N, edge={very thick, densely dashed}
          [{$a_{2^2 \cdot 3^2 \cdot 5^2} \le 342$}, draw, label L=Y
            [{$a_{2 \cdot 3^2 \cdot 5^2} \le 104$}, draw, label L=Y
              [{$a_{2 \cdot 389} \le  12$}, draw, label L=Y
                [{${\cdots}$}, rectangle, draw, label L=Y, tier=bottom]
                [{$a_{881} \le 4$}, draw, label R=N
                  [{$a_{2^4 \cdot 5^3} \le 140$}, draw, label L=Y
                    [{$C_8$}, rectangle, thick, draw, label L=Y, tier=bottom, line width=1.5pt]
                    [{$C_4 \times C_2$}, rectangle, thick, draw, label R=N, tier=bottom, line width=1.5pt]
                  ]
                  [{$C_4 \times C_2$}, rectangle, thick, draw, label R=N, tier=bottom, line width=1.5pt]
                ]
              ]
              [{${\cdots}$}, rectangle, draw, label R=N, tier=bottom]
            ]
            [{${\cdots}$}, rectangle, draw, label R=N, tier=bottom]
          ]
          [{$a_{31^2} \le  1$}, draw, label R=N, edge={very thick, densely dashed}
            [{${\cdots}$}, rectangle, draw, label L=Y, tier=bottom]
            [{$a_{29^2} \le  1$}, draw, label R=N, edge={very thick, densely dashed}
              [{$C_4 \times C_2$}, rectangle, thick, draw, label L=Y, tier=bottom, line width=1.5pt]
              [{$a_{17^2} \le 1$}, draw, label R=N, edge={very thick, densely dashed}
                [{$C_4 \times C_2$}, rectangle, thick, draw, label L=Y, tier=bottom, line width=1.5pt]
                [{$a_{11^2} \le 1$}, draw, label R=N, edge={very thick, densely dashed}
                  [{$C_4 \times C_2$}, rectangle, thick, draw, label L=Y, tier=bottom, line width=1.5pt]
                  [{$a_{7^2} \le 1$}, draw, label R=N, edge={very thick, densely dashed}
                    [{$C_4 \times C_2$}, rectangle, thick, draw, label L=Y, tier=bottom, line width=1.5pt]
                    [{$C_2^3$}, rectangle, thick, draw, label R=N, tier=bottom, line width=1.5pt, edge={very thick, densely dashed}, fill=gray!20]
                  ]
                ]
              ]
            ]
          ]
        ]
      ]
    \end{forest}
    \caption{A decision tree predicts Galois groups of abelian octic fields from zeta coefficients up to $a_{1000}(K)$.}
    \label{fig:octic_ab_zeta_dt}
\end{figure}

In~\cite[Section~5.2]{he2022machine}, the authors found that a random forest classifier trained on zeta coefficients up to \( n = 1000 \) can distinguish between different \emph{abelian} octic extensions with high accuracy (95.47\%). The three possible Galois groups are \( C_8 \), \( C_4 \times C_2 \), and \( C_2^3 \).
 We use a single decision tree for the classification task and find that it achieves a higher accuracy of  98.61\%. As in the case of quartic extensions, we observe that the first few nodes of the decision tree focus on square-indexed zeta coefficients. Motivated by this, we train a decision tree using only square-indexed coefficients with \( n \le 1000 \), which results in an even higher accuracy of 98.91\%.

Figure~\ref{fig:octic_ab_zeta_dt} shows that the model predicts the Galois group to be \( C_8 \) whenever a fourth-power-indexed coefficient \( a_{m^4}(K) \) vanishes for some  \( m \). This strongly suggests that the Galois group is cyclic if there exists a prime \( p \) such that \( a_{p^4}(K) = 0 \). Similarly, the right part of Figure~\ref{fig:octic_ab_zeta_dt} shows that the model predicts the Galois group to be \( C_4 \times C_2 \) when the field is not cyclic and the square-indexed coefficients are small. Motivated by these observations, we studied the distribution of \( a_{p^2}(K) \) and \( a_{p^4}(K) \) for fixed primes \( p \), and formulated a conjecture, which we are able to prove rigorously (Corollary~\ref{cor:octic_zeta_ab}). The proof follows from a couple of theorems established below.

\begin{table}[h]
    \begin{center}
        \begin{tabular}{c|c|c|c}
            \toprule
            $(e, f, g)$ & Euler factor & $a_{p^\ell}(K)$ & $a_{p^{\ell^2}}(K)$ \\
            \midrule
            $(1, 1, \ell^3)$ & $(1 - p^{-s})^{-\ell^3} = \sum_{k \ge 0} \binom{\ell^3 + k - 1}{\ell^3 - 1}p^{-ks}$ & $\binom{\ell^3 + \ell - 1}{\ell^3 - 1}$ & $\binom{\ell^3 + \ell^2 - 1}{\ell^3 - 1}$ \\
            $(1, \ell, \ell^2)$ & $(1 - p^{-\ell s})^{-\ell^2} = \sum_{k \ge 0} \binom{\ell^2 + k - 1}{\ell^2 - 1} p^{-\ell k s}$ & $\ell^2$ & $\binom{\ell^2 + \ell - 1}{\ell^2 - 1}$ \\
            $(1, \ell^2, \ell)$ & $(1 - p^{-\ell^2 s})^{-\ell} = \sum_{k \ge 0} \binom{\ell + k - 1}{\ell - 1} p^{-\ell^2 k s}$ & $0$ & $\ell + 1$ \\
            $(1, \ell^3, 1)$ & $(1 - p^{-\ell^3 s})^{-1} = \sum_{k \ge 0}p^{-\ell^3 k s}$ & $0$ & $0$ \\
            $(\ell, 1, \ell^2)$ & $(1 - p^{-s})^{-\ell^2} = \sum_{k \ge 0} \binom{\ell^2 + k - 1}{\ell^2- 1} p^{-ks}$ & $\binom{\ell^2 + \ell - 1}{\ell^2 - 1}$ & $\binom{2\ell^2 - 1}{\ell^2 - 1}$ \\
            $(\ell, \ell, \ell)$ & $(1 - p^{-\ell s})^{-\ell} = \sum_{k \ge 0} \binom{\ell + k - 1}{\ell - 1} p^{-\ell ks}$ & $\ell$ & $\binom{2\ell - 1}{\ell - 1}$ \\
            $(\ell, \ell^2, 1)$ & $(1 - p^{-\ell^2 s})^{-1} = \sum_{k \ge 0} p^{-\ell^2 ks}$ & $0$ & $1$ \\
            $(\ell^2, 1, \ell)$ & $(1 - p^{-s})^{-\ell} = \sum_{k \ge 0} \binom{\ell + k - 1}{\ell - 1} p^{-ks}$ & $\binom{2\ell - 1}{\ell - 1}$ & $\binom{\ell + \ell^2 - 1}{\ell - 1}$\\
            $(\ell^2, \ell, 1)$ & $(1 - p^{-\ell s})^{-1} = \sum_{k \ge 0} p^{-\ell ks}$ & $1$ & $1$ \\
            $(\ell^3, 1, 1)$ & $(1 - p^{-s})^{-1} = \sum_{k \ge 0} p^{-ks}$ & $1$ & $1$ \\
            \bottomrule
        \end{tabular}
        \caption{Decomposition types, Euler factors, and $p^{\ell}$ and $p^{\ell^2}$-th zeta coefficients for degree $\ell^3$ extensions.}
        \label{tab:lcbeuler}
    \end{center}
\end{table}

\begin{theorem}
    \label{thm:Clcube_zeta_nonzero}
    Let \( \ell \) be a prime, and let \( K/\bQ \) be a Galois extension of degree \( \ell^3 \) with \( G = \Gal(K/\bQ) \simeq C_\ell^3 \). Then for any prime \( p \) and any integer \( b \ge 1 \), we have \( a_{p^{b\ell}}(K) \neq 0 \); that is, the \( p^{b\ell}\)-th zeta coefficients are always nonzero.
\end{theorem}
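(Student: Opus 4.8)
The plan is to reduce the statement to a purely group-theoretic observation about the inertia degree $f$. By Corollary \ref{cor:prime_pow_zeta_coeff}, if $p$ has decomposition type $(e,f,g)$ in $K$, then $a_{p^{b\ell}}(K) \neq 0$ precisely when $f \mid b\ell$. Thus, to prove nonvanishing for all $b \ge 1$, it is enough (and, taking $b=1$, also necessary) to establish that $f \mid \ell$. Since $f$ is a divisor of $[K:\bQ] = \ell^3$ and hence a power of $\ell$, this amounts to showing $f \in \{1, \ell\}$; equivalently, the three decomposition types in Table \ref{tab:lcbeuler} with $f = \ell^2$ or $f = \ell^3$, namely $(1, \ell^2, \ell)$, $(1, \ell^3, 1)$, and $(\ell, \ell^2, 1)$, cannot occur.

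The key step is the following structural remark. Fix a prime $\mathfrak{p} \mid p$ and let $D_\mathfrak{p} \le G$ and $I_\mathfrak{p} \trianglelefteq D_\mathfrak{p}$ be its decomposition and inertia groups. Because $G \simeq C_\ell^3$ is elementary abelian, so is every subgroup, in particular $D_\mathfrak{p}$; moreover every quotient of an elementary abelian $\ell$-group is again elementary abelian. On the other hand, $D_\mathfrak{p}/I_\mathfrak{p} \simeq \Gal(\mathbb{F}_{p^f}/\mathbb{F}_p) \simeq C_f$ is cyclic of order $f$. An elementary abelian $\ell$-group that is cyclic has order $1$ or $\ell$, so $f \in \{1, \ell\}$, exactly as required.

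This suffices to conclude: with $f \mid \ell$ we have $f \mid b\ell$ for every $b \ge 1$, so $a_{p^{b\ell}}(K) \neq 0$ by Corollary \ref{cor:prime_pow_zeta_coeff}. I do not anticipate a serious obstacle, since the argument parallels the proof of Theorem \ref{thm:ClClextn}, where the analogous type $(1, \ell^2, 1)$ was excluded by noting that $D_p/I_p$ would have to be cyclic of order $\ell^2$, impossible as a quotient of $C_\ell^2$. The only point to check carefully is the standard identification of $D_\mathfrak{p}/I_\mathfrak{p}$ with the cyclic Galois group of the residue field extension of degree $f$; no subtlety involving wild ramification arises, because the constraint is on $f$, a residue-field invariant, rather than on the ramification index $e$.
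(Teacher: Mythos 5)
Your proposal is correct and follows essentially the same route as the paper: both arguments reduce the statement to showing $f \in \{1,\ell\}$ by observing that $D_\mathfrak{p}/I_\mathfrak{p}$ is cyclic of order $f$ while the ambient group $C_\ell^3$ has exponent $\ell$, and then conclude nonvanishing from the form of the Euler factor (your appeal to Corollary \ref{cor:prime_pow_zeta_coeff} is just a restatement of the paper's reading of Table \ref{tab:lcbeuler}). The only cosmetic difference is that the paper views $D_\mathfrak{p}/I_\mathfrak{p}$ inside $G/I_\mathfrak{p}$ whereas you quotient $D_\mathfrak{p}$ directly; both are valid since $G$ is abelian.
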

\begin{proof}
    Since every element of \( C_\ell^3 \) has order at most \( \ell \), and \( D_p / I_p \subset G / I_p \) is a cyclic group of order \( f \) (when \( p \) has decomposition type \( (e, f, g) \) in \( K \)), we must have \( f = 1 \) or \( \ell \). Therefore, only 7 cases from Table~\ref{tab:lcbeuler} are possible, and the corresponding Euler factors are all of the form \( (1 - p^{-s})^{-g} \) or \( (1 - p^{-\ell s})^{-g} \). The nonvanishing of the \( p^{b\ell} \)-th coefficient then follows immediately.
\end{proof}

\begin{theorem}
    \label{thm:C4C2}

    Let \( K / \bQ \) be a degree \( \ell^3 \) abelian extension with \( G = \Gal(K / \bQ) \simeq C_{\ell^2} \times C_\ell \). Then the set of rational primes \( p \) for which \( a_{p^\ell}(K) = 0 \) has positive (natural) density. Moreover, \( a_{p^{\ell^2}}(K) \ne 0 \) for all primes \( p \); that is, the \(p^{\ell^2}\)-th zeta coefficients are always nonzero.

\end{theorem}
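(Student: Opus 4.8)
The plan is to handle both claims by reading off the possible values of $a_{p^\ell}(K)$ and $a_{p^{\ell^2}}(K)$ from Table~\ref{tab:lcbeuler}, and then determining which decomposition types $(e,f,g)$ can actually occur for the non-cyclic group $G \simeq C_{\ell^2} \times C_\ell$. The key structural input is that for a prime $p$ with decomposition type $(e,f,g)$ the quotient $D_p/I_p$ is cyclic of order $f$; in particular $f$ is the order of (a lift of) Frobenius, and for unramified $p$ one has $e=1$ and $D_p \simeq C_f$. Both statements then reduce to elementary divisibility and group-order bookkeeping together with Corollary~\ref{cor:prime_pow_zeta_coeff}.

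For the first claim, I would note that by Corollary~\ref{cor:prime_pow_zeta_coeff} one has $a_{p^\ell}(K)=0$ precisely when $f \nmid \ell$. Restricting to the density-one set of unramified primes, $f$ equals the order of $\Frob_p \in G$, so $a_{p^\ell}(K)=0$ holds exactly when $\Frob_p$ has order $\ell^2$, the maximal possible order of an element of $C_{\ell^2}\times C_\ell$. Counting shows there are $\ell(\ell-1)\cdot \ell = \ell^2(\ell-1)$ such elements, namely the pairs $(a,b)$ with $a$ of order $\ell^2$ in $C_{\ell^2}$ and $b\in C_\ell$ arbitrary. Since $G$ is abelian, each element is its own conjugacy class, and the Chebotarev density theorem gives that the density of primes $p$ with $\Frob_p$ of order $\ell^2$ equals $\ell^2(\ell-1)/\ell^3 = (\ell-1)/\ell > 0$. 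This establishes the positive-density statement, in direct analogy with the proof of Theorem~\ref{thm:cyclic_degree_lpower}.

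For the second claim, Corollary~\ref{cor:prime_pow_zeta_coeff} gives $a_{p^{\ell^2}}(K)=0$ precisely when $f \nmid \ell^2$. Since $f \mid |G| = \ell^3$, the only offending value is $f = \ell^3$, which from Table~\ref{tab:lcbeuler} corresponds exactly to the decomposition type $(1,\ell^3,1)$. But $f=\ell^3$ forces the cyclic quotient $D_p/I_p$ to have order $\ell^3$, whence $|D_p|\ge \ell^3 = |G|$, so $D_p = G$ and $I_p$ is trivial; thus $G \simeq D_p \simeq C_{\ell^3}$ would be cyclic, contradicting $G \simeq C_{\ell^2}\times C_\ell$. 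Hence the type $(1,\ell^3,1)$ cannot occur, and $a_{p^{\ell^2}}(K)\ne 0$ for every prime $p$, mirroring Theorem~\ref{thm:Clcube_zeta_nonzero}.

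Neither step presents a serious obstacle: the argument is essentially the same dichotomy used in Theorems~\ref{thm:cyclic_degree_lpower} and~\ref{thm:Clcube_zeta_nonzero}, namely Chebotarev producing vanishing on a positive-density set versus a cyclicity obstruction forcing global nonvanishing. The only points requiring care are purely group-theoretic: verifying that $\ell^2$ is the maximal element order in $C_{\ell^2}\times C_\ell$ and counting the elements attaining it, and confirming that $f=\ell^3$ is the unique divisor of $\ell^3$ that fails to divide $\ell^2$.
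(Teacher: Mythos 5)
Your proof is correct and follows essentially the same route as the paper: for the first claim, both arguments use Chebotarev to produce a positive density of primes with Frobenius of order $\ell^2$ (equivalently, decomposition type $(1,\ell^2,\ell)$), which forces $a_{p^\ell}(K)=0$; for the second, both exclude $f=\ell^3$ (type $(1,\ell^3,1)$) because $C_{\ell^3}$ cannot be a quotient of $D_p$ inside the non-cyclic group $C_{\ell^2}\times C_\ell$. Your explicit element count giving the density $(\ell-1)/\ell$ is a harmless refinement of the paper's qualitative statement.
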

\begin{proof}
    From Table~\ref{tab:lcbeuler}, any unramified prime \( p \) with decomposition type \( (1, \ell^2, \ell) \) or \( (1, \ell^3, 1) \) (i.e., inert in \( K \)) satisfies \( a_{p^\ell}(K) = 0 \). The set of such primes has positive density by the Chebotarev density theorem. For the second part, from Table~\ref{tab:lcbeuler} again, it suffices to show that the decomposition type \( (1, \ell^3, 1) \) cannot occur in \( K \). This follows from the fact that \( C_{\ell^3} \) is not a subquotient of \( C_{\ell^2} \times C_\ell \). 
\end{proof}

\begin{corollary}
    \label{cor:octic_zeta_ab}
    Let $K / \bQ$ be an abelian Galois extension of degree $8$.
    Then the following holds:
    \begin{enumerate}
        \item We have $a_{p^2}(K) \ne 0$ for all primes $p$ if and only if $\Gal(K / \bQ) \simeq C_2^3$.
        \item We have $a_{p^4}(K) = 0$ for some prime $p$ if and only if $\Gal(K / \bQ) \simeq C_8$.
    \end{enumerate}
\end{corollary}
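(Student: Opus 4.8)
The plan is to read off the corollary as the specialization of the three preceding theorems to the prime $\ell = 2$ and degree $\ell^3 = 8$, using the fact that the three abelian groups of order $8$ are precisely $C_8 = C_{2^3}$, $C_4 \times C_2 = C_{2^2} \times C_2$, and $C_2^3$. Thus the entire argument reduces to a finite case check: for each of these three groups I determine whether $a_{p^2}(K)$ and $a_{p^4}(K)$ can vanish as $p$ ranges over the primes, and then observe that the two stated dichotomies fall out directly.

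First I would record the behavior of $a_{p^2}(K)$. For $G \simeq C_2^3$, Theorem~\ref{thm:Clcube_zeta_nonzero} with $\ell = 2$ and $b = 1$ gives $a_{p^2}(K) \ne 0$ for every prime $p$. For $G \simeq C_4 \times C_2 = C_{\ell^2} \times C_\ell$, Theorem~\ref{thm:C4C2} produces a positive-density set of primes $p$ with $a_{p^\ell}(K) = a_{p^2}(K) = 0$, so in particular such a $p$ exists. For $G \simeq C_8 = C_{2^3}$, Theorem~\ref{thm:cyclic_degree_lpower} with $\ell = 2$, $m = 3$, and the exponent $b = 2$ (which is not divisible by $2^3 = 8$) again yields a positive-density set of primes with $a_{p^2}(K) = 0$. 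Hence the property ``$a_{p^2}(K) \ne 0$ for all $p$'' singles out $C_2^3$ among the three groups, which is part~(1).

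Next I would carry out the same analysis for $a_{p^4}(K)$. For $G \simeq C_8$, Theorem~\ref{thm:cyclic_degree_lpower} with $b = 4$ (still not divisible by $8$) gives a prime $p$ with $a_{p^4}(K) = 0$. For $G \simeq C_4 \times C_2$, the second assertion of Theorem~\ref{thm:C4C2} says $a_{p^{\ell^2}}(K) = a_{p^4}(K) \ne 0$ for all $p$. For $G \simeq C_2^3$, Theorem~\ref{thm:Clcube_zeta_nonzero} applied with $b = 2$ (so that $b\ell = 4$) gives $a_{p^4}(K) \ne 0$ for all $p$. Thus the property ``$a_{p^4}(K) = 0$ for some $p$'' singles out $C_8$, which is part~(2).

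There is no genuine obstacle here: every ingredient has already been established, and the proof is essentially bookkeeping. The only point requiring a little care is matching parameters correctly—in particular, verifying that the exponents $b = 2$ and $b = 4$ are not divisible by $\ell^m = 8$ so that Theorem~\ref{thm:cyclic_degree_lpower} genuinely applies in the cyclic case, and noting that for $C_2^3$ the nonvanishing of $a_{p^4}(K)$ comes from the $b = 2$ instance of Theorem~\ref{thm:Clcube_zeta_nonzero} rather than a naive reading of the exponent.
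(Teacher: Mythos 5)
Your proposal is correct and is essentially identical to the paper's proof: both derive the corollary by specializing Theorem~\ref{thm:Clcube_zeta_nonzero} (with $\ell=2$, $b=1$ and $b=2$), Theorem~\ref{thm:C4C2} (with $\ell=2$), and Theorem~\ref{thm:cyclic_degree_lpower} (with $\ell=2$, $m=3$, $b=2$ and $b=4$) to the three abelian groups of order $8$. The only difference is presentational — you organize the case check by group, while the paper organizes it by the two directions of each biconditional — and all parameter choices match exactly.
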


\begin{proof}
        1. ($\Rightarrow$) If \( \Gal(K / \mathbb{Q}) \simeq C_8 \), then Theorem~\ref{thm:cyclic_degree_lpower} with \( \ell = 2 \), \( m = 3 \), and \( b = 2 \) shows that there are infinitely many primes \( p \) such that \( a_{p^2}(K) = 0 \).  
Theorem~\ref{thm:C4C2} implies the same conclusion when \( \Gal(K / \mathbb{Q}) \simeq C_4 \times C_2 \).
($\Leftarrow$) This is precisely Theorem~\ref{thm:Clcube_zeta_nonzero} with \( \ell = 2 \) and \( b = 1 \).

       2. ($\Rightarrow$) If \( \Gal(K / \mathbb{Q}) \simeq C_2^3 \), then Theorem~\ref{thm:Clcube_zeta_nonzero} shows that \( a_{p^4}(K) \) is always nonzero, by taking \( \ell = b = 2 \).  
The case \( \Gal(K / \mathbb{Q}) \simeq C_4 \times C_2 \) follows from Theorem~\ref{thm:C4C2}.
($\Leftarrow$) This follows from Theorem~\ref{thm:cyclic_degree_lpower} with \( \ell = 2 \), \( m = 3 \), and \( b = 4 \). In particular, there are infinitely many such primes \( p \).
\end{proof}

\begin{example}[$C_8$,  sufficient condition]
   Consider the bold branch in Figure~\ref{fig:octic_ab_zeta_dt}.  
If an abelian octic field satisfies the inequalities along this branch, then either \( a_{5^4} = 0 \) or \( a_{3^4} = 0 \).  
By Corollary~\ref{cor:octic_zeta_ab}, this implies that the field must be a \( C_8 \)-extension.  
Therefore, the branch always yields a correct prediction.
\end{example}

As before, we can provide a list of the decomposition types that do not occur for each Galois group and prime~$p$. (Propositions \ref{prop:C8ram}--\ref{prop:C2C2C2ram})

\begin{proposition}
    \label{prop:C8ram}
    Let $K / \bQ$ be a $C_8$-extension and $p$ be a rational prime with decomposition type $(e, f, g)$ in $K$. Then: 
    \begin{enumerate}
        \item If $p = 2$, then $(e, f, g) \not \in \{(1, 8, 1), (2, 4, 1), (4, 2, 1)\}$.
        \item If $p \equiv 5 \pmod{8}$, then $(e, f, g) \ne (8, 1, 1)$.
        \item If $p \equiv 3 \pmod{4}$, then $(e, f, g) \not \in \{(4, 1, 2), (4, 2, 1), (8, 1, 1)\}$.
    \end{enumerate}
 For $p \equiv1 \pmod{8}$, each decomposition type $(e, f, g)$ occurs in some example.
\end{proposition}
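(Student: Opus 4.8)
The plan is to treat the three tame congruence classes together and handle the wild prime $p=2$ separately by invoking Wang's obstruction. Throughout write $D_p = C_{ef}$ and $I_p = C_e$ for the decomposition and inertia groups at a prime $\mathfrak{p} \mid p$; since $G \simeq C_8$ these are automatically cyclic, so the admissible triples are exactly the ten entries of Table~\ref{tab:lcbeuler} with $\ell = 2$, namely $(1,1,8),(1,2,4),(1,4,2),(1,8,1),(2,1,4),(2,2,2),(2,4,1),(4,1,2),(4,2,1),(8,1,1)$.

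First I would record the tame key lemma: \emph{if $p$ is odd and $K_\mathfrak{p}/\bQ_p$ is abelian, then $e \mid p-1$}. When $f = 1$ the local extension is totally and tamely ramified, so this is Proposition~\ref{prop:newton}. When $f \ge 2$, let $\tau$ generate $I_p \simeq C_e$ and let $\sigma$ be a Frobenius lift; Proposition~\ref{prop:tamefrobaction} gives $\sigma \tau \sigma^{-1} = \tau^p$, while commutativity of $C_8$ forces $\sigma \tau \sigma^{-1} = \tau$, whence $\tau^{p-1} = 1$ and $e \mid p-1$ (this is exactly the argument already used in Lemma~\ref{lem:p16_cyclic}). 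With this in hand, parts (2) and (3) are immediate: for $p \equiv 5 \pmod 8$ the only listed type is $(8,1,1)$ with $e = 8$, and $8 \nmid p-1$; for $p \equiv 3 \pmod 4$ the three types $(4,1,2),(4,2,1),(8,1,1)$ all have $e \in \{4,8\}$, and $4 \nmid p-1$.

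For part (1) the prime $p = 2$ is wildly ramified, so the tame lemma does not apply; instead I would quote Wang's theorem (Proposition~\ref{thm:wangcounterex}) with $s = 3$, which says that in a cyclic octic field the prime $2$ either totally ramifies, i.e.\ has type $(8,1,1)$, or splits into at least two primes, i.e.\ $g \ge 2$. The three excluded triples $(1,8,1),(2,4,1),(4,2,1)$ all have $g = 1$ and $e \ne 8$, so each contradicts this dichotomy. Note that $(4,2,1)$ is excluded here for a genuinely different reason than in part (3).

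Finally, for the realizability claim when $p \equiv 1 \pmod 8$ I would argue in two steps. Since $8 \mid p-1$ we have $\mu_8 \subset \bQ_p$, and local class field theory---equivalently, a character $\chi\colon \bQ_p^\times \to \bZ/d$ whose restriction to $\mu_{p-1}$ has image of order $e$---produces, for every $d \mid 8$ and every $e \mid d$, a cyclic extension of $\bQ_p$ of degree $d = ef$ with inertia degree exactly $e$; crucially this includes the ``twisted'' cases with $\gcd(e,f) > 1$, such as the local $C_8$ with $(e,f) = (4,2)$, which are \emph{not} the compositum of an unramified and a totally ramified extension. I would then globalize: by the Grunwald--Wang theorem there is a cyclic degree-$8$ extension $K/\bQ$ whose completion at a prime above $p$ is the prescribed local field, yielding decomposition type $(e,f,g)$ with $g = 8/(ef)$. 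The main obstacle is exactly this last step, since one must verify that we avoid the special case of Grunwald--Wang---harmless here precisely because $p \ne 2$, the very $2$-adic phenomenon that forced the use of Wang's theorem in part (1). For the seven types with $\gcd(e,f) = 1$ one can instead exhibit explicit cyclotomic models, composing a totally ramified $C_e \subset \bQ(\zeta_p)$ with an unramified $C_f \subset \bQ(\zeta_q)$ for an auxiliary prime $q \equiv 1 \pmod 8$ in which $p$ has Frobenius of order $f$, so that only the three types with $\gcd(e,f) > 1$ genuinely require the class field theory input.
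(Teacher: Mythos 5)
Your three exclusion arguments are correct and essentially the paper's own proof: the paper likewise dispatches $p=2$ with Wang's theorem (Proposition~\ref{thm:wangcounterex}) and the odd primes with tame ramification theory via Proposition~\ref{prop:newton}. Two differences are worth noting. First, you apply Wang's dichotomy directly to the decomposition data, whereas the paper routes through the \texttt{LMFDB} list of local $C_8$-extensions of $\bQ_2$ and observes they fail to globalize; the content is the same. Second, your uniform tame lemma (odd $p$, abelian $K_\frp/\bQ_p$ implies $e \mid p-1$, via Proposition~\ref{prop:tamefrobaction} plus commutativity as in Lemma~\ref{lem:p16_cyclic}) handles $(4,2,1)$ more transparently than the paper's phrasing; note, though, that Proposition~\ref{prop:newton} as stated already applies to arbitrary tamely ramified \emph{abelian} extensions, not just totally ramified ones, so your re-derivation for $f \ge 2$, while correct, is not strictly needed. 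You also go beyond the paper on the last sentence of the statement: the paper asserts but never proves that all ten types occur when $p \equiv 1 \pmod 8$, whereas your argument (local existence from a character of $\bQ_p^\times$, then globalization by Grunwald--Wang, with the special case avoided precisely because the prescribed place is odd) is a genuine and correct proof of that claim.

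One genuine flaw, confined to your closing side remark: the ``explicit cyclotomic models'' do not work as stated for most of the seven types with $\gcd(e,f)=1$. The compositum of a $C_e \subset \bQ(\zeta_p)$ with a $C_f \subset \bQ(\zeta_q)$ is a cyclic extension of degree $ef$, which equals $8$ only for $(e,f)=(1,8)$ or $(8,1)$; for types such as $(1,1,8)$, $(1,2,4)$, or $(2,1,4)$ the field you build is not an octic field at all. The types with $e=1$ are easily repaired (take a $C_8$-subfield of $\bQ(\zeta_q)$ alone and choose the power-residue class of $p$ modulo $q$ to dictate the order of its Frobenius), but $(2,1,4)$ and $(4,1,2)$ require a $C_8$-subfield of $\bQ(\zeta_{pq})$ cut out by a product character $\chi_1\chi_2$, with $\chi_1$ ramified of order $e$ at $p$ and $\chi_2$ of order $8$ modulo $q$ satisfying $\chi_2(p)=1$; such a field is \emph{not} a compositum of subfields of $\bQ(\zeta_p)$ and $\bQ(\zeta_q)$. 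Since your Grunwald--Wang argument already covers all ten types, this does not damage the proof --- but the remark should be deleted or corrected rather than left as an alternative route.
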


\begin{proof}

$p = 2$ case is a direct consequence of Proposition~\ref{thm:wangcounterex}; if $g = 1$, then $2$ totally ramifies in $K$ and the only possibility is $(e, f, g) = (8, 1, 1)$.

For odd primes $p$, Proposition~\ref{prop:newton} shows that a totally ramified $C_8$-extension of $\bQ_p$ exists only if $p \equiv 1 \pmod{8}$. Hence, such an extension is not possible when $p \equiv 5 \pmod{8}$ or $p \equiv 3 \pmod{4}$. 
Similarly, if $p \equiv 3 \pmod{4}$, there is no totally and tamely ramified $C_4$-extension of $\bQ_p$, which excludes $(e, f, g) = (4, 1, 2)$ and $(4, 2, 1)$.
\end{proof}

\begin{proposition}
    \label{prop:C4C2ram}
    Let $K / \bQ$ be a $C_4 \times C_2$-extension and $p$ be a rational prime with a decomposition type $(e, f, g)$ in $K$. Then:
    \begin{enumerate}
        \item $(e, f, g) \ne (1, 8, 1)$.
        \item If $p \equiv 1 \pmod{4}$, then $(e, f, g) \ne (8, 1, 1)$.
        \item If $p \equiv 3 \pmod{4}$, then $(e, f, g) \not \in \{(4, 1, 2), (4, 2, 1), (8, 1, 1) \}$.
    \end{enumerate}
\end{proposition}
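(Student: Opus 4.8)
The plan is to establish each forbidden decomposition type by passing to a local extension $K_\frp/\bQ_p$ for a prime $\frp \mid p$ and exploiting three facts: the quotient $D_\frp/I_\frp$ is cyclic of order $f$; for odd $p$ the ramification is tame, so $I_\frp$ is cyclic of order $e$ and Proposition~\ref{prop:newton} applies; and, crucially, that $G = C_4 \times C_2$ is abelian, so every decomposition group $D_\frp$ is abelian. I would organize the proof by decomposition type rather than by the three numbered items, since several items share the same local mechanism.

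For the type $(1,8,1)$ (item 1), the conditions $e=1$ and $g=1$ force $I_\frp = 1$ and $D_\frp = G$, so $D_\frp/I_\frp \simeq G$ would have to be cyclic of order $8$; since $C_4 \times C_2$ is not cyclic, this is impossible for every $p$. For the totally ramified type $(8,1,1)$ (appearing in items 2 and 3, where $p$ is odd), tameness forces the inertia group $I_\frp = G$ to be cyclic of order $8$, which is again impossible. Both cases use only that $C_4 \times C_2$ is neither cyclic nor admits a cyclic quotient of order $8$, so in fact they hold for all odd $p$, the restriction to $p \equiv 1 \pmod 4$ in item 2 being merely organizational.

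The remaining cases assume $p \equiv 3 \pmod 4$. For $(4,1,2)$, each $K_\frp/\bQ_p$ is totally and tamely ramified with $I_\frp = D_\frp$ cyclic of order $4$, i.e.\ a $C_4$-extension; Proposition~\ref{prop:newton} then yields $4 \mid p-1$, contradicting $p \equiv 3 \pmod 4$. The type $(4,2,1)$ is the delicate one, and I expect it to be the main obstacle. Here $D_\frp = G$ has order $8$, the inertia $I_\frp \simeq C_4$ is generated by some $\tau$, and the fixed field $\tilde K = K_\frp^{I_\frp}$ is unramified with residue field $\bF_{p^2}$. Applying Proposition~\ref{prop:newton} to the tame $C_4$-extension $K_\frp/\tilde K$ only gives $4 \mid p^2 - 1$, which is automatic for odd $p$ and produces no contradiction; this is precisely where the subextension argument used for $(3,3,1)$ in Proposition~\ref{prop:C9ram} breaks down.

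To finish $(4,2,1)$ I would instead use abelianness directly. Let $\sigma$ be a lift of Frobenius generating $D_\frp/I_\frp \simeq C_2$. Proposition~\ref{prop:tamefrobaction} gives $\sigma \tau \sigma^{-1} = \tau^p$, while commutativity of $D_\frp$ gives $\sigma \tau \sigma^{-1} = \tau$; hence $\tau^{p-1} = 1$, and since $\tau$ has order $4$ this forces $4 \mid p-1$, again contradicting $p \equiv 3 \pmod 4$. The crux of the argument is recognizing that Proposition~\ref{prop:newton} alone is insufficient for $(4,2,1)$ while the Frobenius-action Proposition~\ref{prop:tamefrobaction} closes the gap, and that it is exactly the abelian hypothesis on $G$ (in contrast to the $S_3$ and $D_5$ analyses) that makes $\sigma$ and $\tau$ commute.
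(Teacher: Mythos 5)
Your proof is correct, and for three of the four forbidden types it follows the same route as the paper: $(1,8,1)$ is killed because $D_\frp/I_\frp \simeq G$ would have to be cyclic of order $8$, $(8,1,1)$ because tame inertia is cyclic and $C_8 \not\hookrightarrow C_4 \times C_2$, and $(4,1,2)$ by Proposition~\ref{prop:newton}. The divergence is in $(4,2,1)$, and there your mathematics is right but your meta-claim about the paper's tools is not. You assert that Proposition~\ref{prop:newton} is insufficient for $(4,2,1)$ because applying it to the totally ramified layer $K_\frp/\tilde K$ only yields $4 \mid p^2-1$. But Proposition~\ref{prop:newton} does not require the extension to be totally ramified: its hypotheses are tame and \emph{abelian}, and its conclusion is that $e$ divides $\#k^\times$ for $k$ the residue field of the \emph{base}. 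Since $D_\frp \subseteq G$ is abelian and $K_\frp/\bQ_p$ is tame with $e=4$ over base residue field $\bF_p$, the proposition applies directly to the whole extension and gives $4 \mid p-1$; this is exactly what the paper does, disposing of $(4,1,2)$ and $(4,2,1)$ in one stroke. (This is also why the paper cites a statement specific to abelian extensions rather than the weaker general tame bound involving the top residue field, and why the subextension trick was needed in Proposition~\ref{prop:C9ram}, where the relevant local extensions need not be handled this way.)

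Your substitute argument --- $\sigma\tau\sigma^{-1}=\tau^p$ from Proposition~\ref{prop:tamefrobaction}, commutativity of $D_\frp$ forcing $\tau^{p-1}=1$, hence $4 \mid p-1$ --- is valid, and it is essentially a proof of the abelian case of Proposition~\ref{prop:newton}. So the two routes prove the same inequality: yours makes the underlying mechanism (Frobenius acting trivially on tame inertia inside an abelian group) explicit and self-contained, while the paper's is shorter because it uses the cited proposition in its full, not-necessarily-totally-ramified strength. The only thing to correct in your write-up is the claim that Proposition~\ref{prop:newton} cannot close the $(4,2,1)$ case.
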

\begin{proof}

The largest possible cyclic quotient of \( C_4 \times C_2 \) is \( C_4 \). Hence \(f \le 4 \) and this excludes \( (e, f, g) = (1, 8, 1) \).

For odd primes \( p \), the corresponding extension \( K_\frp / \bQ_p \) is tamely ramified, so both the inertia subgroup \( I_\frp \) and the quotient \( D_\frp / I_\frp \) are cyclic, of orders \( e \) and \( f \), respectively. In particular, the decomposition type \( (e, f, g) = (8, 1, 1) \) is impossible, since \( C_8 \) cannot embed into \( C_4 \times C_2 \). If \( (e, f, g) = (4, 1, 2) \) or \( (4, 2, 1) \), then Proposition~\ref{prop:newton} implies \( p \equiv 1 \pmod{4} \), so such types are excluded when \( p \equiv 3 \pmod{4} \).
\end{proof}

\begin{proposition}
    \label{prop:C2C2C2ram}
    Let $K / \bQ$ be a $C_2^3$-extension and $p$ be a rational prime with decomposition type $(e, f, g)$ in $K$. Then:
    \begin{enumerate}
        \item If $p = 2$, then $(e, f, g) \not \in \{(1, 4, 2), (1, 8, 1), (2, 4, 1), (8, 1, 1)\}$.
        \item If $p$ is odd, then $(e, f, g) \not \in \{(1, 4, 2), (1, 8, 1), (2, 4, 1), (4, 1, 2), (4, 2, 1), (8, 1, 1)\}$.
    \end{enumerate}
\end{proposition}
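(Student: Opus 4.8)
The plan is to exploit two structural features of $G = \Gal(K/\bQ) \simeq C_2^3$: it is abelian, so by the discussion in Section~\ref{subsec:prelim_tameram} the decomposition and inertia groups $D_p, I_p \le G$ depend only on $p$, and for any $\frp \mid p$ the quotient $D_p/I_p \simeq \Gal(\kappa(\frp)/\bF_p)$ is cyclic of order $f$, generated by Frobenius; and it has exponent $2$, so every cyclic subgroup and every cyclic quotient of $G$ has order at most $2$. These two facts do almost all of the work.

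First I would dispose of the large-$f$ types. Since $D_p/I_p$ is a cyclic quotient of the exponent-$2$ group $G$, we must have $f \le 2$. This immediately excludes $(1,4,2)$, $(1,8,1)$, and $(2,4,1)$ for every prime $p$, which are exactly the three types common to parts (1) and (2). Next, for odd $p$, I would rule out the remaining listed types $(4,1,2)$, $(4,2,1)$, and $(8,1,1)$, each of which has ramification index $e \in \{4,8\}$. Because $e$ is a power of $2$ and $p$ is odd, $(p,e)=1$, so $K_\frp/\bQ_p$ is tamely ramified and its inertia group $I_p$ is cyclic of order $e$. But a cyclic subgroup of $G$ has order at most $2$, forcing $e \le 2$, a contradiction. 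This completes part (2).

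Finally, for $p = 2$ the tame argument fails for $(8,1,1)$, since that extension is wildly ramified, and this is the one genuinely delicate point. (Note that $(4,1,2)$ and $(4,2,1)$ are deliberately absent from part (1): they really occur at $2$, realized locally by a totally ramified $C_2^2$-extension such as $\bQ_2(\sqrt{2},\sqrt{-1})$, all three of whose quadratic subfields are ramified.) To exclude $(8,1,1)$ I would show that $\bQ_2$ has no totally ramified $C_2^3$-extension. Since $\dim_{\bF_2} \bQ_2^\times/(\bQ_2^\times)^2 = 3$, the maximal elementary abelian $2$-extension of $\bQ_2$ already has Galois group $C_2^3$; by Kummer theory it is therefore the unique $C_2^3$-extension of $\bQ_2$ and equals the compositum of all quadratic extensions of $\bQ_2$. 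In particular it contains the unramified quadratic extension $\bQ_2(\sqrt{5})$, so any $C_2^3$-extension of $\bQ_2$ has residue degree $f \ge 2$ and cannot be totally ramified; this rules out $(8,1,1)$. Equivalently, by local class field theory total ramification would force $\bZ_2^\times \simeq C_2 \times \bZ_2$ to surject onto $C_2^3$, which is impossible since its maximal exponent-$2$ quotient is $C_2^2$; alternatively, the nonexistence can be read off directly from the $p$-adic field tables in \texttt{LMFDB}.

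The main obstacle is precisely this wild case at $p = 2$: the uniform exponent-$2$ and tame cyclic-inertia arguments handle everything except the existence question for a totally ramified $C_2^3$-extension of $\bQ_2$, where one must supply a genuine arithmetic input (Kummer theory, local class field theory, or a database check) in place of the tame reasoning.
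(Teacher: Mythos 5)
Your proof is correct, and for the case $p=2$ it takes a genuinely different route from the paper's. The paper handles part (1) entirely by consulting the \texttt{LMFDB} $p$-adic field tables: for each listed type it checks that every extension of $\bQ_2$ with that $(e,f)$ either has Galois group not embeddable in $C_2^3$ or is not Galois. You replace this database check with two pieces of pure mathematics. First, since $D_p/I_p$ is cyclic (generated by Frobenius) and is a subquotient of the exponent-$2$ group $G$, you get $f \le 2$ for \emph{every} prime $p$, with no tameness hypothesis; this disposes of $(1,4,2)$, $(1,8,1)$, $(2,4,1)$ uniformly and is in fact the conceptual reason underlying the paper's \texttt{LMFDB} observations for those types. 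Second, you exclude $(8,1,1)$ at $p=2$ --- the only genuinely wild case --- by Kummer theory or local class field theory: since $\dim_{\bF_2}\bQ_2^\times/(\bQ_2^\times)^2 = 3$, the unique $C_2^3$-extension of $\bQ_2$ is the compositum of all quadratic extensions and contains the unramified $\bQ_2(\sqrt{5})$, so $f \ge 2$; equivalently, total ramification would force $\bZ_2^\times \simeq C_2 \times \bZ_2$, whose maximal exponent-$2$ quotient is $C_2^2$, to surject onto $C_2^3$. For odd $p$ your argument and the paper's coincide (tameness makes $I_p$ cyclic, hence $e \le 2$). What your approach buys is self-containedness and a uniform group-theoretic explanation; what the paper's buys is brevity and consistency with the neighboring Propositions \ref{prop:C8ram} and \ref{prop:D4Q8ram}, which also lean on \texttt{LMFDB} at $p=2$. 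Your parenthetical observation that $(4,1,2)$ and $(4,2,1)$ genuinely occur at $p=2$ (via the totally ramified $C_2^2$-extension $\bQ_2(\sqrt{2},\sqrt{-1})$) is a nice check that part (1) cannot be strengthened, though it is not needed for the proof.
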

\begin{proof}
% For \( p = 2 \), one can directly check from \texttt{LMFDB}~\cite{lmfdb} that the only degree 4 extension with decomposition type \( (e, f) = (1, 4) \) has Galois group \( C_4 \), which is not a subgroup of \( C_2^3 \) (see \href{https://www.lmfdb.org/padicField/?n=4&p=2&e=1}{here}).
 For $p = 2$ and $(e, f) = (1, 4)$, there is no such extension, since decomposition group of such extension should be $C_4$, which cannot be a subgroup of $C_2^3$.
Similarly, if $ef = 8$, then the decomposition group is $C_2^3$.
Since $\Gal(\bF_{2^f} / \bF_{2}) \simeq C_f$ has to be a quotient of the decomposition group, we have $f \le 2$ and this rules out $(e, f) = (1, 8)$ and $(2, 4)$.

In the case of $(8, 1)$, assume that there exists a totally ramified $C_2^3$-extension $K_\frp$ of $\bQ_2$.
There are 7 quadratic extensions of $\bQ_2$, which are $\bQ_2(\sqrt{-1}), \bQ_2(\sqrt{\pm 2}), \bQ_2(\sqrt{\pm 3}), \bQ_2(\sqrt{\pm 6})$, including the unique unramified extension $\bQ_2(\sqrt{-3})$.
Since there are exactly 7 (normal) subgroups of $C_2^3$ of index 2 (which can be counted by the number of nonzero vectors in $\bF_2^3$ corresponding to the normal vectors of the corresponding hyperplanes), all of the quadratic extensions of $\bQ_2$ appear as a subextension of $K_\frp$.
In particular, it contains the uniramified extension $\bQ_2(\sqrt{-3})$ and cannot be totally ramified.

% Similarly, for \( (e, f) \in \{(1, 8), (2, 4), (8, 1)\} \), the degree 8 extensions of \( \bQ_2 \) with these decomposition types either have Galois groups that are not subgroups of \( C_2^3 \), or they are not Galois extensions;  see \cite{lmfdb-n8p2e1f8,lmfdb-n8p2e2f4,lmfdb-n8p2e8f1}.
% (see \href{https://www.lmfdb.org/padicField/?n=8&p=2&e=1&f=8}{here}, \href{https://www.lmfdb.org/padicField/?n=8&p=2&e=2&f=4}{here}, and \href{https://www.lmfdb.org/padicField/?n=8&p=2&e=8&f=1}{here}).

When \( p \) is odd, the corresponding local field extensions \( K_\frp / \bQ_p \) (for \( \frp \mid p \)) are tamely ramified. The assertion then follows from the fact that the only cyclic subgroups or quotients of \( C_2^3 \) are trivial or isomorphic to \( C_2 \), so we must have \( e, f \le 2 \).
\end{proof}

\begin{example}[$C_4 \times C_2$]
    Based on Propositions~\ref{prop:C8ram}--\ref{prop:C2C2C2ram}, one way to distinguish \( C_4 \times C_2 \) from the other two possibilities using zeta coefficients is to show that the decomposition type at \( p = 2 \) is \( (2, 4, 1) \).  
For instance, this is enforced when \( a_{2^2} = a_{2^3} = 0 \) and \( a_{2^4} = 1 \), as one can see from Table~\ref{tab:lcbeuler}.  
However, we were not able to identify a branch in the decision tree (Figure~\ref{fig:octic_ab_zeta_dt}) that uses this condition to predict the Galois group as \( C_4 \times C_2 \).
\end{example}

\begin{example}[ False positive for $C_2^3$]
\label{ex:C2C2C2}
 We also have an example of a ``false positive'' branch in Figure~\ref{fig:octic_ab_zeta_dt}, similar to Example~\ref{ex:C3C3}.

The only branch that predicts the Galois group as \( C_2^3 \) in Figure~\ref{fig:octic_ab_zeta_dt} is the right-most branch (dashed bold branch).   In fact, the model achieves \(100\%\) recall and precision on the test set when the Galois group is \( C_2^3 \), i.e., there are no false positives or negatives for $C_2^3$.  
However, there exist non-\( C_2^3 \) abelian octic extensions that satisfy all the inequalities along this path but are not included in our dataset. Indeed, using a brute-force search, we find a prime \( q = \num{14836487689} \) such that \( q \equiv 1 \pmod{8} \) and \( p^{\frac{q-1}{8}} \equiv 1 \pmod{q} \) for all \( p = 2, 3, 5, 7, 11, 17, 23, 29, 31 \).  
Then all these primes \( p \) completely split in the unique \( C_8 \)-subextension \( K \) of \( \bQ(\zeta_q) \), so \( a_{p^2} = 36 \) from Table \ref{tab:lcbeuler} and the field satisfies all the inequalities on the dashed path, but is not a \( C_2^3 \)-extension.  
Note that this field is not listed in \texttt{LMFDB} \cite{lmfdb}; there is no number field ramified at \( q \)).
As in Example~\ref{ex:C3C3}, there are infinitely many such \( q \) by the Chebotarev density theorem.
 Although these are the only examples we found from the trees, it would be interesting to find more such examples in other branches or tasks.
\end{example}

\paragraph{Logistic regression}

With 1,000 zeta coefficients (resp. with standard normalization) as input, the logistic regression model achieved 93.81\% (resp. 92.88\%) accuracy in classifying abelian octic Galois groups (see Table~\ref{tab:galois_8_ab}). Table~\ref{tab:octic_ab_lr_weights} lists, for each Galois group, the indices corresponding to the 10 largest and 10 smallest model weights, along with the weight values. Since this is a 3-way classification task, the model has a total of $3 \times 1000$ weights and 3 bias terms. In comparison, all the other logistic regression experiments in this study were binary classification tasks, each involving 1,000 weights and one bias.

A notable pattern is that many indices associated with significantly large or small weights are squares. For instance, among the 10 largest weights for the Galois group $C_2^3$, all of them correspond to square indices.
Also,  25 appears as the index of the  seventh and sixth smallest weight for $C_8$ and $C_4 \times C_2$, while it is the index of the second largest weight for $C_2^3$.
This indicates that a large value of $a_{25}$ pushes the model toward predicting $C_2^3$. The average absolute values of all weights are  0.32, 0.20, and 0.27 for the three Galois groups. However, when restricted to square-indexed coefficients, these averages increase significantly to  2.36, 1.27, and 2.50. This indicates that the model identifies square-indexed coefficients as particularly informative, consistent with the interpretations drawn from the decision tree model.

\begin{table}[H]
    \begin{center}
        \resizebox{\columnwidth}{!}{%
        \begin{tabular}{c|c|c|c|c|c|c|c|c|c|c|c}
            \toprule
            \multirow{2}{*}{$C_8$} & top & 125 (6.5) & 343 (5.7) & 31 (5.2) & \textbf{16} (4.4) & \textbf{729} (4.2) & 23 (4.0) & 19 (3.9) & 29 (3.8) & 11 (2.7) & 243 (2.6) \\
            \cline{2-12}
            & bottom & \textbf{81} (-6.2) & \textbf{961} (-5.3) & \textbf{49} (-5.2) & \textbf{529} (-4.3) & \textbf{841} (-4.2) & \textbf{361} (-4.2) & \textbf{25} (-3.8) & \textbf{289} (-2.8) & \textbf{121} (-2.8) & 162 (-2.7) \\
            \midrule
            \multirow{2}{*}{$C_4 \times C_2$} & top & \textbf{81} (9.2) & \textbf{625} (5.4) & 162 (3.8) & \textbf{324} (3.2) & 567 (2.8) & 5 (1.8) & 891 (1.8) & 405 (1.5) & 648 (1.3) & 488 (1.3) \\
            \cline{2-12}
            & bottom & 125 (-5.1) & \textbf{729} (-3.2) & 243 (-2.8) & 189 (-2.7) & 27 (-2.0) & \textbf{25} (-2.0) & 486 (-1.9) & \textbf{16} (-1.8) & 135 (-1.6) & 54 (-1.5) \\
            \midrule
            \multirow{2}{*}{$C_2^3$} & top & \textbf{961} (5.8) & \textbf{25} (5.8) & \textbf{49} (5.7) & \textbf{841} (5.0) & \textbf{529} (4.8) & \textbf{361} (4.6) & \textbf{9} (3.7) & \textbf{289} (3.4) & \textbf{121} (2.9) & \textbf{36} (2.6) \\
            \cline{2-12}
            & bottom & 343 (-6.4) & 31 (-5.9) & 29 (-4.7) & 23 (-4.4) & 19 (-4.4) & \textbf{625} (-4.1) & 17 (-3.3) & 11 (-3.1) & \textbf{81} (-3.0) & \textbf{16} (-2.6) \\
            \bottomrule
        \end{tabular}
        }
        \caption{Indices of the 10 largest and smallest weights (with actual weight values in parentheses) among 1000 logistic regression coefficients used to classify Galois groups of abelian octic fields  with normalized zeta coefficients.}
        \label{tab:octic_ab_lr_weights}
    \end{center}
\end{table}

\paragraph{Polynomial coefficients}

Using polynomial coefficients, both the decision tree and logistic regression models perform poorly, achieving accuracies of 77.21\% and 19.11\%, respectively.
The latter improves to  64.89\% after normalization.
Recall the discussion at the end of Section~\ref{subsubsec:49_poly}.

\subsubsection{Distinguishing between nonabelian extensions}
\label{subsubsec:octic_nab}

We trained a decision tree to classify non-abelian octic Galois extensions, i.e., distinguishing between \( D_4 \)- and \( Q_8 \)-extensions. The model achieved a high accuracy of  98.67\%, and we observed that the top nodes in the tree correspond to square-indexed coefficients.  However, the previous approach does not yield a criterion based on zeta coefficients to distinguish between the two groups, since the possible decomposition types for a given prime are identical for both \( D_4 \)- and \( Q_8 \)-extensions.
 In particular, for primes $p \le 17$, we checked that all the decomposition types that are \emph{not} ruled out by Proposition~\ref{prop:D4Q8ram} can occur for some number field in \texttt{LMFDB}.\footnote{See \texttt{src/verify\_galois.sage} of the GitHub repository.}

Proposition~\ref{prop:D4Q8ram} below describes the decomposition types that cannot occur in either \( D_4 \)- or \( Q_8 \)-extensions. We first present two lemmas needed for the proof.

\begin{lemma}
    \label{lem:octic_p14_ab}
    Let \( p \equiv 1 \pmod{4} \) be a prime. Then any degree 8 Galois extension of \( \bQ_p \) has Galois group either \( C_8 \) or \( C_4 \times C_2 \).
\end{lemma}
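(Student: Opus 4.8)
The plan is to exploit the fact that, for odd $p$, every $2$-power degree extension of $\bQ_p$ is automatically tamely ramified, so that its Galois group is severely constrained: tame inertia is cyclic and the Frobenius acts on it by the $p$-power map (Proposition~\ref{prop:tamefrobaction}).

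First I would record the tameness. Writing $e$ and $f$ for the ramification and inertia degrees of $K/\bQ_p$, we have $ef = 8$, so $e$ is a power of $2$; since $p$ is odd this gives $(p, e) = 1$, and hence $K/\bQ_p$ is tamely ramified. Consequently the inertia group $G_0 \trianglelefteq G \coloneqq \Gal(K/\bQ_p)$ is cyclic of order $e$, the quotient $G/G_0 \cong \Gal(\ell/\bF_p)$ is cyclic of order $f$, and $G$ is generated by a generator $\tau$ of $G_0$ together with a lift $\sigma$ of Frobenius. In particular $G$ is metacyclic, hence $2$-generated, which already excludes $G \simeq C_2^3$, the unique group of order $8$ that is not generated by two elements.

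Next I would bring in the conjugation relation $\sigma \tau \sigma^{-1} = \tau^p$ of Proposition~\ref{prop:tamefrobaction} to force commutativity. When $e \le 4$, the hypothesis $p \equiv 1 \pmod 4$ forces $p \equiv 1 \pmod e$, so $\tau^p = \tau$ and the generators $\sigma, \tau$ commute; thus $G$ is abelian, ruling out $D_4$ and $Q_8$. When $e = 8$ we have $f = 1$, so $G = G_0$ is cyclic and in particular abelian. Either way $G$ is an abelian group of order $8$ that is $2$-generated, hence isomorphic to $C_8$ or $C_4 \times C_2$, completing the argument.

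The routine verifications are the coprimality $(p,e)=1$ and the identification of $G/G_0$ with the residue Galois group; the one point deserving care is that $C_2^3$ is excluded not by commutativity (it is abelian) but by the $2$-generation coming from tameness, so both ingredients are genuinely needed. The only edge case is $e = 8$ (which in fact forces $p \equiv 1 \pmod 8$ by Proposition~\ref{prop:newton}), where $f = 1$ and the Frobenius relation is vacuous; this must be handled separately, but there $G$ is visibly cyclic. I expect no serious obstacle beyond this small bookkeeping.
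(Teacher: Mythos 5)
Your proof is correct and takes essentially the same route as the paper's: tameness for odd $p$ yields the metacyclic structure $1 \to C_e \to G \to C_f \to 1$, and the Frobenius relation $\sigma\tau\sigma^{-1} = \tau^p$ of Proposition~\ref{prop:tamefrobaction} together with $p \equiv 1 \pmod{4}$ forces $G \simeq C_e \times C_f$, which must then be $C_8$ or $C_4 \times C_2$ (this is exactly the argument the paper imports from Lemma~\ref{lem:p16_cyclic}). The only cosmetic differences are that you rule out $C_2^3$ by $2$-generation where the paper observes that $\max(e,f) \ge 4$ forces the exponent of $C_e \times C_f$ to be at least $4$, and that you treat the totally ramified case $e = 8$ explicitly, a point the paper leaves implicit.
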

\begin{proof}
    The proof is similar to that of Lemma~\ref{lem:p16_cyclic}.  
Let \( K / \bQ_p \) be an octic Galois extension with Galois group \( G \).  
Since \( (p, 2) = 1 \), the extension is tamely ramified, and the argument in Lemma~\ref{lem:p16_cyclic} shows that \( G \simeq C_e \times C_f \).  
Thus, \( G \) must be abelian. Moreover, \( G \) cannot be isomorphic to \( C_2^3 \), since at least one of \( e \) or \( f \) must be 4, and hence the exponent of \( G \) must be at least 4.
\end{proof}

By a similar argument, we also obtain the following:
\begin{lemma}
    \label{lem:octic_p12_ab}
    Let \( p \) be an odd prime. Then any degree \( 8 \) Galois extension of \( \mathbb{Q}_p \) with decomposition type \( (e, f) = (2, 4) \) is abelian.
\end{lemma}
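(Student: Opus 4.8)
The plan is to mimic the proof of Lemma~\ref{lem:p16_cyclic} almost verbatim, exploiting tameness together with the explicit Frobenius action on tame inertia from Proposition~\ref{prop:tamefrobaction}. Let \( K/\mathbb{Q}_p \) be the given octic Galois extension, write \( G = \Gal(K/\mathbb{Q}_p) \), and note that since \( K/\mathbb{Q}_p \) is Galois there is a single prime above \( p \), so \( efg = 8 \) with \( g = 1 \); the type \( (e,f) = (2,4) \) thus means \( (e,f,g) = (2,4,1) \). The first step is to observe that, because \( p \) is odd and \( e = 2 \), we have \( (p, e) = 1 \), so \( K/\mathbb{Q}_p \) is tamely ramified.

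Next I would invoke the ramification theory recalled in Section~\ref{subsec:prelim_tameram}. Tameness gives \( G_1 = \{1\} \), so the inertia subgroup \( I = G_0 \) embeds into \( \ell^\times \) and is therefore cyclic; here \( |I| = e = 2 \), so \( I \simeq C_2 \) with some generator \( \tau \). The quotient \( G/I \simeq \Gal(\ell / k) \) is cyclic of order \( f = 4 \), generated by the image of a Frobenius lift \( \sigma \). Thus \( G \) is an extension
\[
1 \to C_2 \to G \to C_4 \to 1,
\]
and in particular \( G = \langle \sigma, \tau \rangle \).

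The key step is to show \( \sigma \) and \( \tau \) commute. Proposition~\ref{prop:tamefrobaction} gives \( \sigma \tau \sigma^{-1} = \tau^p \), and since \( \tau \) has order \( 2 \) while \( p \) is odd, \( \tau^p = \tau \). Hence \( \sigma \) and \( \tau \) commute, and as they generate \( G \), the group \( G \) is abelian.

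As for difficulties: there is essentially no obstacle, since the entire point is that tameness reduces the determination of \( G \) to the single relation \( \sigma \tau \sigma^{-1} = \tau^p \). The only things to verify carefully are the bookkeeping facts that a Galois extension of a local field has \( g = 1 \) (so that \( (2,4,1) \) is the unique type with \( (e,f) = (2,4) \)) and that \( I \) is genuinely cyclic of order \( 2 \); both are immediate from the standard theory in Section~\ref{subsec:prelim_tameram}. One should also note that the argument does not pin down \( G \) beyond ``abelian of order \( 8 \) admitting a \( C_4 \) quotient'': it could be \( C_8 \) or \( C_4 \times C_2 \), but not \( C_2^3 \) (whose exponent is too small), which is all that the statement asserts.
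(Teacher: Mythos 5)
Your proof is correct and uses essentially the same approach as the paper: tameness plus the Frobenius conjugation relation $\sigma\tau\sigma^{-1}=\tau^p$ from Proposition~\ref{prop:tamefrobaction}, exactly as in Lemma~\ref{lem:p16_cyclic}. The only difference is cosmetic: the paper splits into the cases $p \equiv 1 \pmod 4$ (citing Lemma~\ref{lem:octic_p14_ab}) and $p \equiv 3 \pmod 4$ (running the argument directly), whereas your observation that $\tau^p = \tau$ holds for every odd $p$ once $\tau$ has order $2$ handles both cases uniformly and is, if anything, slightly cleaner.
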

\begin{proof}
    If \( p \equiv 1 \pmod{4} \), then Lemma~\ref{lem:octic_p14_ab} shows the claim.  
If \( p \equiv 3 \pmod{4} \), the extension is still tamely ramified, and the same argument as in Lemma~\ref{lem:p16_cyclic} shows that the short exact sequence must split, hence \( G \) must be abelian.
\end{proof}

\begin{proposition}
    \label{prop:D4Q8ram}
    Let $K / \bQ$ be an octic Galois extension with Galois group $D_4$ or $Q_8$. Let $p$ be a rational prime with decomposition type $(e, f, g)$ in $K$. Then:
    \begin{enumerate}
        % \item If $p=2$, then $(e, f, g) \not\in \{(1, 8, 1), (2, 4, 1)\}$.
        \item  For all prime $p$, $(e, f, g) \not \in \{(1, 8, 1), (2, 4, 1)\}$.
        \item If $p \equiv 1 \pmod{4}$, then $(e, f, g) \not \in \{(4, 2, 1), (8, 1, 1)\}$.
        \item If $p \equiv 3 \pmod{4}$, then $(e, f, g) \not \in \{(4, 1, 2), (8, 1, 1)\}$.
    \end{enumerate}
\end{proposition}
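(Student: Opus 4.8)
The plan is to reduce every forbidden type $(e,f,g)$ to a statement about the local Galois group $D_\frp = \Gal(K_\frp/\bQ_p)$ for a prime $\frp \mid p$, together with its inertia subgroup $I_\frp$ of order $e$ and the cyclic quotient $D_\frp/I_\frp$ of order $f$ generated by Frobenius; recall $\#D_\frp = ef$ and $g = [G : D_\frp]$, so $g=1$ exactly when $D_\frp = G$. I would then derive a contradiction either from the group structure of $D_4$ and $Q_8$ or from tame-ramification constraints. The two group facts I will use repeatedly are that both $D_4$ and $Q_8$ are nonabelian of order $8$ with abelianization $C_2 \times C_2$ (so their largest cyclic quotient is $C_2$), and that neither contains an element of order $8$. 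I will also use that for odd $p$ the extension $K_\frp/\bQ_p$ is tamely ramified, since $\gcd(p,8)=1$.

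First I would clear the types that are forbidden for purely group-theoretic reasons. For $(1,8,1)$ we have $g=1$ and $e=1$, so $D_\frp=G$ and $I_\frp=1$, forcing $G \simeq D_\frp/I_\frp$ to be cyclic of order $8$ — impossible; this disposes of $(1,8,1)$ in all three parts. For $(2,4,1)$ we again have $g=1$, so $G = D_\frp$ admits the cyclic quotient $D_\frp/I_\frp \simeq C_4$; but $G^{\mathrm{ab}} \simeq C_2 \times C_2$ has no quotient isomorphic to $C_4$, a contradiction, and this needs no hypothesis on $p$, settling part (1). For $(8,1,1)$ with $p$ odd the extension is tamely ramified, so the inertia group $I_\frp = G$ would be cyclic, contradicting nonabelianness; this handles the $(8,1,1)$ entries of parts (2) and (3).

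The remaining two cases are where the congruence on $p$ genuinely enters. For $(4,2,1)$ with $p \equiv 1 \pmod 4$: here $D_\frp = G$ is the full group of the octic Galois extension $K_\frp/\bQ_p$, so Lemma~\ref{lem:octic_p14_ab} forces $G \simeq C_8$ or $C_4 \times C_2$, contradicting $G \in \{D_4, Q_8\}$. (Alternatively, one argues directly: $I_\frp = \langle\tau\rangle \simeq C_4$, a Frobenius lift $\sigma$ satisfies $\sigma\tau\sigma^{-1} = \tau^p = \tau$ by Proposition~\ref{prop:tamefrobaction} and $p \equiv 1 \pmod 4$, whence $G = \langle\sigma,\tau\rangle$ is abelian.) For $(4,1,2)$ with $p \equiv 3 \pmod 4$: here $f=1$ and $ef=4$, so $D_\frp = I_\frp$ is a totally and (as $p$ is odd) tamely ramified cyclic group of order $4$; then Proposition~\ref{prop:newton} gives $4 \mid p-1$, i.e. $p \equiv 1 \pmod 4$, contradicting the hypothesis.

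I do not anticipate a serious obstacle: once each type is rephrased through $(D_\frp, I_\frp)$, every contradiction is immediate. The only point requiring care is bookkeeping about \emph{which} exclusions are group-theoretic (hence $p$-independent) and which require arithmetic input. In particular one must notice that $(8,1,1)$ is \emph{not} excluded at $p=2$, because wild ramification permits a nonabelian totally ramified inertia group, which is precisely why that type is absent from the list in part (1); and that $(4,2,1)$ and $(4,1,2)$ are group-theoretically admissible for $D_4$ and $Q_8$, so they can only be ruled out via Lemma~\ref{lem:octic_p14_ab} and Proposition~\ref{prop:newton} respectively under the stated congruence conditions.
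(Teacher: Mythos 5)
Your proof is correct, and on four of the five exclusions it is essentially the paper's argument: $(1,8,1)$ dies because $C_8$ is not a quotient of $D_4$ or $Q_8$; $(8,1,1)$ for odd $p$ dies because tame inertia is cyclic while $G$ is nonabelian; $(4,2,1)$ for $p \equiv 1 \pmod 4$ is handled by Lemma~\ref{lem:octic_p14_ab} (your alternative via Proposition~\ref{prop:tamefrobaction} is exactly the proof of that lemma); and $(4,1,2)$ for $p \equiv 3 \pmod 4$ uses Proposition~\ref{prop:newton}, as in the paper. The genuine divergence is the exclusion of $(2,4,1)$ at $p=2$. The paper disposes of this case by consulting the \texttt{LMFDB} tables of $2$-adic fields: no $D_4$- or $Q_8$-extension of $\bQ_2$ has $(e,f)=(2,4)$. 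You instead argue purely group-theoretically: $g=1$ forces $D_\frp = G$, the inertia subgroup $I_\frp$ is normal in $D_\frp$ with $D_\frp/I_\frp$ cyclic of order $f=4$ (this isomorphism with the Galois group of the residue extension is untouched by wild ramification, so it is valid at $p=2$), and neither $D_4$ nor $Q_8$ admits a $C_4$ quotient since both have abelianization $C_2 \times C_2$. This is a strictly stronger argument: it is database-free, and it rules out $(2,4,1)$ uniformly for \emph{every} prime $p$, which in particular subsumes the step in the paper's proof where Lemma~\ref{lem:octic_p12_ab} is invoked to exclude $(2,4,1)$ for $p \equiv 3 \pmod 4$ (an exclusion that does not even appear in the statement). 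What the paper's route buys is only that the same \texttt{LMFDB} lookup pattern is reused elsewhere (e.g.\ Propositions~\ref{prop:C8ram} and~\ref{prop:C2C2C2ram}, where the database really is needed); for this particular case your argument is the cleaner one.
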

\begin{proof}
    First of all, we have \( (e, f, g) \ne (1, 8, 1) \) for any prime \(p\), since \(C_8\) cannot arise as a quotient of either \(D_4\) or \(Q_8\).
     Assume that a prime $p$ has decomposition type $(e, f, g) = (2, 4, 1)$. Then the decomposition group $D_p$ has a normal subgroup $I_p$ of order $e = 2$, whose quotient has to be cyclic (isomorphic to $\Gal(\bF_{p^4} / \bF_p) \simeq C_4$). However, the only normal subgroup of order 2 in $D_4$ (resp. $Q_8$) is the center, i.e. $\langle r^2 \rangle$ where $r$ is the rotation by 90 degree (resp. $\langle -1 \rangle$), and the quotients by the subgroup is isomorphic to $C_2^2$, since the square of any element belongs to the center.

    This proves $(e, f, g) \not \in \{(1, 8, 1), (2, 4, 1)\}$ for  any $p$.

    For odd $p$, a corresponding local extension $K_\frp / \bQ_p$ has to be tame and so both $I_\frp$ and $D_\frp / I_\frp$ are cyclic of order $e$ and $f$ respectively, which excludes the possibilities $(1, 8, 1)$ and $(8, 1, 1)$.
    For $p \equiv 1 \pmod{4}$, $(e, f, g) \ne (4, 2, 1)$ follows from Lemma \ref{lem:octic_p14_ab}.
    For $p \equiv 3 \pmod{4}$, Lemma \ref{lem:octic_p12_ab} shows $(e, f, g) \ne (2, 4, 1)$.
    If $p \equiv 3 \pmod{4}$ has a decomposition type of $(e, f, g) = (4, 1, 2)$, then a corresponding local extension is totally and tamely ramified quartic extension, so Proposition \ref{prop:newton} shows $p \equiv 1 \pmod{4}$ and gives a contradiction.
\end{proof}

Since the proposition above applies to both nonabelian octic extensions, it does not help distinguish between the two cases. Nonetheless, we observe that the distributions of square-indexed coefficients differ significantly between \( D_4 \)- and \( Q_8 \)-extensions (Table~\ref{tab:octic_nab_psq_dist}). This discrepancy partially explains why the decision tree model achieved high accuracy using square-indexed zeta coefficients. It would be interesting to derive an explicit formula for the probability \( \mathbb{P}[a_{p^2}(K) = a \mid \Gal(K / \mathbb{Q}) \simeq G] \) and \( \mathbb{P}[\Gal(K / \mathbb{Q}) \simeq G \mid a_{p^2}(K) = a]\)  over all Galois extensions $K$ with Galois group $G$ (ordered by their discriminants or heights) in terms of $a$ and $p$.

\paragraph{Polynomial coefficients}

With polynomial coefficients, the decision tree model yields a high accuracy of  98.76\%. By examining the first few nodes, we reach a conclusion similar to the sextic case (Section~\ref{sec:sextic_poly}): the model tends to classify a given octic field as a \( D_4 \)-extension when the coefficients are ``small.''
For example, among the \num{62442} fields in the training set, \num{22307} (resp. \num{40135}) have Galois group \( D_4 \) (resp. \( Q_8 \)). The first node of the decision tree checks whether \( c_4 \le \num{167573.5} \); \num{20953} \( D_4 \)-extensions (93.93\% of \num{22307}) satisfy this condition, while \num{39594} \( Q_8 \)-extensions (98.65\% of \num{40135}) satisfy the opposite inequality \( c_4 > \num{167573.5} \).
We also computed the mean and median of the polynomial coefficients \( c_0, c_1, \dots, c_7 \) for each Galois group, and found that the magnitudes \( |c_0|, |c_1|, \dots, |c_6| \) are significantly larger for \( Q_8 \)-extensions (Table~\ref{tab:octic_nonab_poly}).

In contrast, the logistic regression model achieves only  68.21\% (resp. 84.33\%) accuracy with polynomial coefficients (resp. normalized polynomial coefficients); see Table~\ref{tab:galois_8_nab}.

\begin{table}[H]
    \begin{center}
        \resizebox{\columnwidth}{!}{%
        \begin{tabular}{c|c|c|c|c|c|c|c|c|c|c|c|c|c|c|c|c}
            \toprule
             & \multicolumn{2}{c|}{$c_0$} & \multicolumn{2}{c|}{$c_1$} & \multicolumn{2}{c|}{$c_2$} & \multicolumn{2}{c|}{$c_3$} & \multicolumn{2}{c|}{$c_4$} & \multicolumn{2}{c|}{$c_5$} & \multicolumn{2}{c|}{$c_6$} & \multicolumn{2}{c}{$c_7$} \\
            \midrule
            $D_4$ & $1.3 \cdot 10^{11}$ & $1.2 \cdot 10^5$ & $5.5 \cdot 10^8$ & $0$ & $-1.7 \cdot 10^8$ & $1.8 \cdot 10^3$ & $2.0 \cdot 10^6$ & $0$ & $3.3 \cdot 10^5$ & $1.7 \cdot 10^3$ & $78.4$ & $0$ & $-104.6$ & $4.0$ & $-1.2$ & $0$ \\
            \midrule
            $Q_8$ & $2.3 \cdot 10^{18}$ & $1.5 \cdot 10^{14}$ & $-2.3 \cdot 10^{11}$ & $0$ & $-1.8 \cdot 10^{11}$ & $-1.0 \cdot 10^9$& $1.1 \cdot 10^8$ & $0$ & $7.9 \cdot 10^9$ & $7.7 \cdot 10^8$ & $2.4 \cdot 10^3$ & $0$ & $-8.8 \cdot 10^3$ & $-1.3 \cdot 10^4$ & $-0.6$ & $0$ \\
            \bottomrule
        \end{tabular}
        }
        \caption{Mean (left) and median (right) of polynomial coefficients of nonabelian octic extensions.}
        \label{tab:octic_nonab_poly}
    \end{center}
\end{table}

\subsubsection{Distinguishing between abelian and nonabelian extensions}
\label{subsubsec:octic_ab_nab}

\begin{figure}[t]
    \centering
    \footnotesize
    \begin{forest}
      label L/.style={edge label={node[midway,left,font=\scriptsize]{#1}}},
      label R/.style={edge label={node[midway,right,font=\scriptsize]{#1}}},
      for tree={forked edge, child anchor=north, for descendants={edge=->}}
      [{$a_{5^4} \le 1$}, draw
        [{$\mathrm{ab}$}, rectangle, thick, draw, label L=Y, tier=bottom, line width=1.5pt, edge=very thick, fill=gray!40]
        [{$a_{23^2} \le 3$}, draw, label R=N, edge=very thick
          [{$a_{3^4} =0$}, draw, label L=Y, edge=very thick
            [{$\mathrm{ab}$}, rectangle, thick, draw, label L=Y, tier=bottom, line width=1.5pt]
            [{$a_{11^2} \le 3$}, draw, label R=N, edge=very thick
              [{$a_{3^4} \le 1$}, draw, label L=Y, edge=very thick
                [{$a_{3^2} =0$}, draw, label L=Y, edge=very thick
                  [{$\mathrm{ab}$}, rectangle, thick, draw, label L=Y, tier=bottom, line width=1.5pt]
                  [{$\mathrm{nab}$}, rectangle, thick, draw, label R=N, tier=bottom, line width=1.5pt, edge=very thick, fill=gray!40]
                ]
                [{${\cdots}$}, rectangle, draw, label R=N, tier=bottom]
              ]
              [{${\cdots}$}, rectangle, draw, label R=N, tier=bottom]
            ]
          ]
          [{$a_{17^2} \le 3$}, draw, label R=N, edge=very thick
            [{$a_{3^4} =0$}, draw, label L=Y
              [{$\mathrm{ab}$}, rectangle, thick, draw, label L=Y, tier=bottom, line width=1.5pt]
              [{${\cdots}$}, rectangle, draw, label R=N, tier=bottom]
            ]
            [{$a_{2^2 \cdot 3^2 \cdot 5^2} \le 7$}, draw, label R=N, edge=very thick
              [{${\cdots}$}, rectangle, draw, label L=Y, tier=bottom]
              [{$a_{13^2} =0$}, draw, label R=N, edge=very thick
                [{${\cdots}$}, rectangle, draw, label L=Y, tier=bottom]
                [{$a_{11^2} \le 1$}, draw, label R=N, edge=very thick
                  [{$a_{13^2} \le 1$}, draw, label L=Y, edge=very thick
                    [{$\mathrm{ab}$}, rectangle, thick, draw, label L=Y, tier=bottom, line width=1.5pt, edge=very thick, fill=gray!40]
                    [{${\cdots}$}, rectangle, draw, label R=N, tier=bottom]
                  ]
                  [{${\cdots}$}, rectangle, draw, label R=N, tier=bottom]
                ]
              ]
            ]
          ]
        ]
      ]
    \end{forest}
    \caption{A decision tree predicts commutativity of Galois groups of octic fields from zeta coefficients up to $a_{1000}(K)$.}
    \label{fig:octic_ab_nab_zeta_dt}
\end{figure}

We also conducted experiments on distinguishing between abelian (\( C_8 \), \( C_4 \times C_2 \), \( C_2^3 \)) and nonabelian (\( D_4 \), \( Q_8 \)) extensions as a binary classification task. Using 1000 zeta coefficients, the decision tree model achieved an accuracy of  96.58\%. We observed that the model primarily focuses on square-indexed coefficients (Figure~\ref{fig:octic_ab_nab_zeta_dt}). By combining Propositions~\ref{prop:C8ram}--\ref{prop:C2C2C2ram} and~\ref{prop:D4Q8ram}, one can derive criterias that distinguish abelian from nonabelian extensions.

\begin{corollary}
    \label{cor:octic_ab_nab}
    Let \( K / \bQ \) be an octic Galois extension, and let \( p \) be a rational prime.
    \begin{enumerate}
        \item If \( a_{p^4}(K) = 0 \), then \( \Gal(K / \bQ) \) is a \(C_8\)-extension (hence abelian).
        \item For \( p \equiv 1 \pmod{4} \), if  \( a_{p^4}(K) = 1 \) or \( a_{p^2}(K)=1  \), then \( \Gal(K / \bQ) \) is abelian.
        \item If \( p \equiv 3 \pmod{4} \), \( a_{p^4}(K) = 1 \), and \( a_{p^2}(K) > 0 \), then \( \Gal(K / \bQ) \) is nonabelian.
    \end{enumerate}
\end{corollary}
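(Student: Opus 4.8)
The plan is to reduce each of the three statements to a short case analysis on the decomposition type $(e,f,g)$ of $p$ in $K$, using the $\ell = 2$ specialization of Table~\ref{tab:lcbeuler}, which records $a_{p^2}(K)$ and $a_{p^4}(K)$ for each of the ten possible decomposition types of a degree-$8$ extension. The key point is that the hypotheses on the zeta coefficients pin down a very short list of admissible types, after which the earlier ramification results (Propositions~\ref{prop:C8ram}--\ref{prop:C2C2C2ram} for the abelian groups, Proposition~\ref{prop:D4Q8ram} for $D_4$ and $Q_8$) or the local classification lemmas (Lemmas~\ref{lem:octic_p14_ab} and~\ref{lem:octic_p12_ab}) eliminate all but the desired Galois groups.

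First I would read off from the table (with $\ell = 2$) the pairs $(a_{p^2}(K), a_{p^4}(K))$ for each type, noting in particular that $a_{p^4}(K) = 0$ occurs only for $(1,8,1)$; that $a_{p^4}(K) = 1$ occurs only for $(2,4,1)$, $(4,2,1)$, and $(8,1,1)$; and that $a_{p^2}(K) = 1$ occurs only for $(4,2,1)$ and $(8,1,1)$. For statement~(1), the condition $a_{p^4}(K) = 0$ forces the totally inert type $(1,8,1)$, so $I_p$ is trivial and $D_p \simeq D_p/I_p \simeq \Gal(\bF_{p^8}/\bF_p) \simeq C_8$; since $C_8$ embeds only into $C_8$ among the five octic Galois groups, we conclude $\Gal(K/\bQ) \simeq C_8$. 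Equivalently, the type $(1,8,1)$ is excluded for every other group by Propositions~\ref{prop:C4C2ram}, \ref{prop:C2C2C2ram}, and~\ref{prop:D4Q8ram}.

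For statement~(2), every type forced by $a_{p^4}(K)=1$ or $a_{p^2}(K)=1$ has $g = 1$, so there is a single prime $\frp \mid p$ with $D_\frp = G$, and the local extension $K_\frp/\bQ_p$ is itself an octic Galois extension with group $G$. Since $p \equiv 1 \pmod 4$, Lemma~\ref{lem:octic_p14_ab} shows this local Galois group is abelian, hence $G$ is abelian; note that this argument automatically handles the type $(2,4,1)$, which Proposition~\ref{prop:D4Q8ram} alone does not exclude for the nonabelian groups. For statement~(3), the combined conditions $a_{p^4}(K) = 1$ and $a_{p^2}(K) > 0$ narrow the type to $(4,2,1)$ or $(8,1,1)$; I would then invoke Proposition~\ref{prop:C8ram}(3), Proposition~\ref{prop:C4C2ram}(3), and Proposition~\ref{prop:C2C2C2ram}(2) to rule out $C_8$, $C_4 \times C_2$, and $C_2^3$ respectively when $p \equiv 3 \pmod 4$, leaving only the nonabelian groups $D_4$ and $Q_8$.

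The analysis is essentially mechanical once the table is specialized, so the only real care is bookkeeping: making sure the list of types producing each prescribed coefficient value is exhaustive, and, in statement~(2), ensuring the passage from the local group $\Gal(K_\frp/\bQ_p)$ to the global group $G$ is legitimate—this is precisely where $g = 1$ (so that $D_\frp = G$) is needed. I expect no genuine obstacle beyond verifying the binomial-coefficient entries of the table and confirming that the cited propositions indeed cover every abelian possibility in the relevant congruence class.
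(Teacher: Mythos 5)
Your proposal is correct, and parts (1) and (3) follow the paper's proof essentially verbatim: the table lookup forcing $(1,8,1)$ in part (1), and the reduction to $(4,2,1)$ or $(8,1,1)$ followed by Propositions~\ref{prop:C8ram}--\ref{prop:C2C2C2ram} in part (3). In part (2), however, you take a genuinely better route. The paper excludes the candidate types by citing Proposition~\ref{prop:D4Q8ram}, but that proposition, as stated, does not exclude $(e,f,g) = (2,4,1)$ when $p \equiv 1 \pmod{4}$ (its second item lists only $(1,8,1)$, $(4,2,1)$, and $(8,1,1)$), so the paper's citation leaves a small gap at exactly the type $(2,4,1)$, which does satisfy $a_{p^4}(K)=1$. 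Your argument closes this: since every admissible type has $g=1$, the decomposition group equals the full Galois group $G$, so $K_\frp/\bQ_p$ is itself an octic Galois extension with group $G$, and Lemma~\ref{lem:octic_p14_ab} then forces $G$ to be abelian. This handles all the types uniformly, including $(2,4,1)$, and makes explicit the local-to-global passage (via $D_\frp = G$) that the paper leaves implicit; the paper's intent is presumably the same, since Lemma~\ref{lem:octic_p14_ab} underlies Proposition~\ref{prop:D4Q8ram}, but your version is strictly more careful.
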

\begin{proof}
    Let \( (e, f, g) \) be the decomposition type of \( p \) in \( K \).

    \begin{enumerate}
        \item  If \(a_{p^4}(K) = 0\), Table~\ref{tab:lcbeuler} shows that we must have \((e, f, g) = (1, 8, 1)\), which implies  that $\Gal(K / \bQ)$ is isomorphic to the decomposition group ($g = 1$), and the decomposition group is isomorphic to $\Gal(\bF_{p^8} / \bF_p) \simeq C_8$ ($e = 1$).
        This proves the claim.
         (Note that there is no $C_8$-extension of $\bQ$ with $p = 2$ inerts or unramifies by Proposition \ref{thm:wangcounterex}.)
        \item  From Table~\ref{tab:lcbeuler}, the condition \(a_{p^2}(K) = 1\) or \(a_{p^4}(K) = 1\) implies that the possible decomposition types of \(p\) are \((1, 8, 1)\), \((2, 4, 1)\), \((4, 2, 1)\), or \((8, 1, 1)\). By Proposition~\ref{prop:D4Q8ram}, all of these possibilities are excluded for nonabelian extensions, and thus the extension must be abelian.

        \item The condition \( a_{p^4}(K) = 1 \) implies that the possible decomposition types of \( p \) are \( (2, 4, 1) \), \( (4, 2, 1) \), or \( (8, 1, 1) \).  
        Also, \( a_{p^2}(K) > 0 \) implies that the decomposition type cannot be \( (2, 4, 1) \), hence we must have \( (e, f, g) \in \{(4, 2, 1), (8, 1, 1)\} \).  
        However, by Propositions~\ref{prop:C8ram}--\ref{prop:C2C2C2ram}, such decomposition types cannot occur for abelian octic extensions when \( p \equiv 3 \pmod{4} \), so \( \Gal(K / \mathbb{Q}) \) must be nonabelian. \qedhere
    \end{enumerate}
\end{proof}

\begin{example}[abelian, sufficient condition]
    Assume that an octic Galois extension \( K / \mathbb{Q} \) follows the leftmost bold branch of Figure~\ref{fig:octic_ab_nab_zeta_dt}, so that \( a_{5^4}(K) \le 1.5 \).   
    Then $a_{5^4}(K) = 0$ or $1$ and Corollary~\ref{cor:octic_ab_nab} shows that the extension must be abelian.

    Similarly, consider the rightmost bold branch of Figure~\ref{fig:octic_ab_nab_zeta_dt}.  
    If an octic Galois extension satisfies all the inequalities along this path, then \( a_{13^2}(K) = 1 \), which again forces \( K \) to be an abelian extension by Corollary~\ref{cor:octic_ab_nab}.
\end{example}

\begin{example}[nonabelian, sufficient condition]
    Consider the middle bold branch of Figure~\ref{fig:octic_ab_nab_zeta_dt}.  
If an octic Galois extension \( K / \mathbb{Q} \) satisfies all the corresponding inequalities, then we have \( a_{3^4}(K) = 1 \) and \( a_{3^2}(K) > 0.5 \).  
Then Corollary~\ref{cor:octic_ab_nab} implies that \( K \) must be a nonabelian extension, i.e., the branch always yields a correct prediction.
\end{example}

In contrast, we found that logistic regression models perform poorly in distinguishing between abelian and nonabelian extensions, whether using zeta coefficients or polynomial coefficients.  
The model achieved only 76.27\% (resp. 75.83\%) accuracy with 1000 zeta coefficients (resp. with standard normalization).

Table~\ref{tab:galois_8_ab}, \ref{tab:galois_8_nab}, and \ref{tab:galois_8_ab_nab} in the Appendix summarize the results for octic extensions.

%%%%%%%%%%%% CONCLUSION %%%%%%%%%%%%

\section{Conclusion}
\label{sec:conclusion}

In this paper, we apply interpretable machine learning (ML) methods to the problem of classifying Galois groups and prove several new theorems based on the resulting interpretations.
These results offer different perspectives that emerged only through the analysis of ML outputs.  Our study highlights the potential of ML not merely as a predictive tool, but as a means to generate mathematical conjectures and contribute to the discovery of rigorous mathematical theorems.
We hope that this emerging methodology will continue to gain momentum and contribute to meaningful advances in mathematics.

We conclude this paper by proposing possible directions for future research on Galois groups.
First, it is natural to ask if there are similar criteria in other degrees for determining Galois groups from Dedekind zeta coefficients.
For example, Yang-Hui He asked whether the following statement is true:
\emph{For a given degree \(d\), there exist (possibly infinitely many) pairs \((n, a)\) of natural numbers such that, for any Galois extension \(K / \bQ\) of degree \(d\), the condition \(a_n(K) = a\) implies that \(K\) is a cyclic extension}.

Next, one can consider more general extensions---including non-Galois extensions---and attempt to predict the Galois groups of their splitting fields. 
For example, up to isomorphism there are five possible Galois groups of quartic fields (\(C_4\), \(C_2\times C_2\), \(D_4\), \(A_4\), and \(S_4\)), and it would be interesting to see whether simple ML algorithms, such as decision trees, can still classify these groups effectively.
One can also study extensions of number fields whose base field is not necessarily \(\mathbb{Q}\), using the zeta-function coefficients of both the base and extension fields to predict the corresponding Galois groups.  Ultimately, it would be very interesting if these ML experiments led to new efficient algorithms for computing Galois groups of number fields.

\paragraph{Data availability}

All the data used in the experiments come from \texttt{LMFDB} \cite{lmfdb} with further computations using SageMath \cite{sagemath}. Anyone can get the same data using \texttt{nf\_query.sage} file in the GitHub repository.

\paragraph{Conflict of Interest Statement}

The authors declare no conflicts of interest regarding this manuscript.

%%%%%%%%%% APPENDIX %%%%%%%%%%%%%

\appendix

\section{Tables and Confusion Matrices}

\subsection{Quartic and nonic fields}

\begin{table}[H]
    % \scriptsize
    \begin{center}
        \begin{tabular}{c|c|c|c|c|c|c|c}
        % \begin{tabular}{c|c|l}
            \toprule
            Model & \multicolumn{2}{c|}{Feature} & ACC & BACC & Precision & Recall & F1 \\
            \midrule
            \multirow{10}{*}{DT} & \multirow{8}{*}{ZC} & \multirow{2}{*}{all (1000)} & \multirow{2}{*}{\meanstd{100.00}{0.00}} & \multirow{2}{*}{\meanstd{99.99}{0.02}} & \meanstd{100.00}{0.00} & \meanstd{99.98}{0.04} & \meanstd{99.99}{0.02} \\
             \cline{6-8}
             & & & & & \meanstd{100.00}{0.00} & \meanstd{100.00}{0.00} & \meanstd{100.00}{0.00} \\
             \cline{3-8}
             & & \multirow{2}{*}{squares (31)} & \multirow{2}{*}{\meanstd{99.99}{0.01}} & \multirow{2}{*}{\meanstd{99.97}{0.02}} & \meanstd{99.97}{0.03} & \meanstd{99.95}{0.04} & \meanstd{99.96}{0.03} \\
             \cline{6-8}
             & & & & & \meanstd{99.99}{0.01} & \meanstd{100.00}{0.00} & \meanstd{99.99}{0.00} \\
             \cline{3-8}
             & & \multirow{2}{*}{primes (168)} & \multirow{2}{*}{\meanstd{91.11}{0.09}} & \multirow{2}{*}{\meanstd{79.27}{0.30}} & \meanstd{62.64}{0.94} & \meanstd{63.69}{0.54} & \meanstd{63.16}{0.69} \\
             \cline{6-8}
             & & & & & \meanstd{95.06}{0.06} & \meanstd{94.84}{0.11} & \meanstd{94.95}{0.05} \\
             \cline{3-8}
             & & \multirow{2}{*}{prime powers (193)} & \multirow{2}{*}{\meanstd{100.00}{0.00}} & \multirow{2}{*}{\meanstd{99.99}{0.02}} & \meanstd{100.00}{0.00} & \meanstd{99.98}{0.04} & \meanstd{99.99}{0.02} \\
             \cline{6-8}
             & & & & & \meanstd{100.00}{0.00} & \meanstd{100.00}{0.00} & \meanstd{100.00}{0.00} \\
             \cline{2-8}
             % \cline{2-4}
             & \multicolumn{2}{c|}{\multirow{2}{*}{PC (4)}} & \multirow{2}{*}{\meanstd{95.16}{0.18}} & \multirow{2}{*}{\meanstd{87.82}{0.32}} & \meanstd{80.73}{0.93} & \meanstd{78.18}{0.61} & \meanstd{79.43}{0.65} \\
             \cline{6-8}
             & \multicolumn{1}{l}{} & & & & \meanstd{97.05}{0.06} & \meanstd{97.46}{0.16} & \meanstd{97.25}{0.11} \\
             % \cline{1-8}
            \midrule
            \multirow{14}{*}{LR} & \multirow{10}{*}{ZC} & \multirow{2}{*}{all (1000)} & \multirow{2}{*}{\meanstd{99.60}{0.03}} & \multirow{2}{*}{\meanstd{98.88}{0.09}} & \meanstd{98.76}{0.19} & \meanstd{97.93}{0.19} & \meanstd{98.34}{0.09} \\
            \cline{6-8}
            & & & & & \meanstd{99.72}{0.03} & \meanstd{99.83}{0.03} & \meanstd{99.78}{0.01} \\
            \cline{3-8}
            & & \multirow{2}{*}{all (normalized, 1000)} & \multirow{2}{*}{\meanstd{99.41}{0.03}} & \multirow{2}{*}{\meanstd{98.27}{0.09}} & \meanstd{98.26}{0.02} & \meanstd{96.77}{0.16} & \meanstd{97.51}{0.14} \\
            \cline{6-8}
            & & & & & \meanstd{99.56}{0.02} & \meanstd{99.77}{0.03} & \meanstd{99.66}{0.02} \\
            \cline{3-8}
            & & \multirow{2}{*}{squares (31)} & \multirow{2}{*}{\meanstd{98.05}{0.06}} & \multirow{2}{*}{\meanstd{92.29}{0.25}} & \meanstd{98.85}{0.08} & \meanstd{84.72}{0.05} & \meanstd{91.24}{0.27} \\
            \cline{6-8}
            & & & & & \meanstd{97.96}{0.08} & \meanstd{99.87}{0.01} & \meanstd{98.91}{0.04} \\
            \cline{3-8}
            & & \multirow{2}{*}{primes (168)} & \multirow{2}{*}{\meanstd{88.04}{0.17}} & \multirow{2}{*}{\meanstd{50.00}{0.00}} & \meanstd{0.00}{0.00} & \meanstd{0.00}{0.00} & \meanstd{0.00}{0.00} \\
            \cline{6-8}
            & & & & & \meanstd{88.04}{0.00} & \meanstd{100.00}{0.00} & \meanstd{0.00}{0.00} \\
            \cline{3-8}
            & & \multirow{2}{*}{prime powers (193)} & \multirow{2}{*}{\meanstd{99.65}{0.02}} & \multirow{2}{*}{\meanstd{98.92}{0.01}} & \meanstd{99.16}{0.22} & \meanstd{97.94}{0.06} & \meanstd{98.55}{0.08} \\
            \cline{6-8}
            & & & & & \meanstd{99.72}{0.01} & \meanstd{99.89}{0.03} & \meanstd{99.80}{0.01} \\
            \cline{2-8}
            & \multicolumn{2}{c|}{\multirow{2}{*}{PC (4)}} & \multirow{2}{*}{\meanstd{22.45}{0.57}} & \multirow{2}{*}{\meanstd{44.34}{0.43}} & \meanstd{10.53}{0.17} & \meanstd{73.12}{1.41} & \meanstd{18.40}{0.30} \\
            \cline{6-8}
            & \multicolumn{1}{l}{} & & & & \meanstd{80.98}{0.66} & \meanstd{15.56}{0.86} & \meanstd{26.10}{1.23} \\
            \cline{2-8}
            & \multicolumn{2}{c|}{\multirow{2}{*}{PC (normalized, 4)}} & \multirow{2}{*}{\meanstd{88.06}{0.18}} & \multirow{2}{*}{\meanstd{50.08}{0.05}} & \meanstd{100.00}{0.00} & \meanstd{0.16}{0.09} & \meanstd{0.32}{0.18} \\
            \cline{6-8}
            & \multicolumn{1}{l}{} & & & & \meanstd{88.06}{0.18} & \meanstd{100.00}{0.00} & \meanstd{93.65}{0.10} \\
            \bottomrule
        \end{tabular}
         \caption{ Performance of decision tree (DT) and logistic regression (LR) models on distinguishing Galois groups of quartic extensions, using zeta coefficients (ZC) or polynomial coefficients (PC) as features, averaged over 5 different random splits. We report accuracy (ACC), balanced accuracy (BACC), per-class precision, recall, and F1. For the latter three metrics, top (resp. bottom) row corresponds to the class of $C_4$ (resp. $C_2^2$). The majority baseline is 88.04\% ($C_2^2$).}
        \label{tab:galois_4}
    \end{center}
\end{table}

\begin{table}[H]
    % \scriptsize
    \begin{center}
        \begin{tabular}{c|c|c|c|c|c|c|c}
            \toprule
            Model & \multicolumn{2}{c|}{Feature} & ACC & BACC & Precision & Recall & F1 \\
            \midrule
            \multirow{10}{*}{DT} & \multirow{8}{*}{ZC} & \multirow{2}{*}{all (1000)} & \multirow{2}{*}{\meanstd{100.00}{0.00}} & \multirow{2}{*}{\meanstd{100.00}{0.00}} & \meanstd{100.00}{0.00} & \meanstd{100.00}{0.00} & \meanstd{100.00}{0.00} \\
             \cline{6-8}
             & & & & & \meanstd{100.00}{0.00} & \meanstd{100.00}{0.00} & \meanstd{100.00}{0.00} \\
             \cline{3-8}
             & & \multirow{2}{*}{cubes (10)} & \multirow{2}{*}{\meanstd{100.00}{0.00}} & \multirow{2}{*}{\meanstd{100.00}{0.00}} & \meanstd{100.00}{0.00} & \meanstd{100.00}{0.00} & \meanstd{100.00}{0.00} \\
             \cline{6-8}
             & & & & & \meanstd{100.00}{0.00} & \meanstd{100.00}{0.00} & \meanstd{100.00}{0.00} \\
             \cline{3-8}
             & & \multirow{2}{*}{primes (168)} & \multirow{2}{*}{\meanstd{92.50}{0.66}} & \multirow{2}{*}{\meanstd{88.60}{1.68}} & \meanstd{84.27}{3.78} & \meanstd{81.57}{4.21} & \meanstd{82.76}{2.16} \\
             \cline{6-8}
             & & & & & \meanstd{94.75}{1.25} & \meanstd{95.62}{1.23} & \meanstd{95.17}{0.51} \\
             \cline{3-8}
             & & \multirow{2}{*}{prime powers (193)} & \multirow{2}{*}{\meanstd{100.00}{0.00}} & \multirow{2}{*}{\meanstd{100.00}{0.00}} & \meanstd{100.00}{0.00} & \meanstd{100.00}{0.00} & \meanstd{100.00}{0.00} \\
             \cline{6-8}
             & & & & & \meanstd{100.00}{0.00} & \meanstd{100.00}{0.00} & \meanstd{100.00}{0.00} \\
             \cline{2-8}
             & \multicolumn{2}{c|}{\multirow{2}{*}{PC (9)}} & \multirow{2}{*}{\meanstd{94.15}{2.58}} & \multirow{2}{*}{\meanstd{92.16}{3.48}} & \meanstd{86.19}{6.74} & \meanstd{88.38}{5.38} & \meanstd{87.18}{5.29} \\
             \cline{6-8}
             & \multicolumn{1}{l}{} & & & & \meanstd{96.48}{1.81} & \meanstd{95.94}{2.12} & \meanstd{96.20}{1.70} \\
            \midrule
            \multirow{14}{*}{LR} & \multirow{10}{*}{ZC} & \multirow{2}{*}{all (1000)} & \multirow{2}{*}{\meanstd{97.16}{0.92}} & \multirow{2}{*}{\meanstd{94.66}{1.67}} & \meanstd{97.24}{3.90} & \meanstd{90.01}{3.70} & \meanstd{93.36}{1.93} \\
            \cline{6-8}
            & & & & & \meanstd{97.10}{1.38} & \meanstd{99.30}{0.89} & \meanstd{98.18}{0.63} \\
            \cline{3-8}
            & & \multirow{2}{*}{all (normalized, 1000)} & \multirow{2}{*}{\meanstd{78.51}{2.84}} & \multirow{2}{*}{\meanstd{67.19}{2.42}} & \meanstd{52.57}{4.16} & \meanstd{46.52}{4.72} & \meanstd{49.13}{2.91} \\
            \cline{6-8}
            & & & & & \meanstd{84.91}{3.15} & \meanstd{87.86}{2.29} & \meanstd{86.32}{2.16} \\
            \cline{3-8}
            & & \multirow{2}{*}{cubes (10)} & \multirow{2}{*}{\meanstd{95.97}{0.52}} & \multirow{2}{*}{\meanstd{91.12}{0.48}} & \meanstd{99.59}{0.82} & \meanstd{82.33}{0.98} & \meanstd{90.14}{0.62} \\
            \cline{6-8}
            & & & & & \meanstd{95.13}{0.77} & \meanstd{99.90}{0.21} & \meanstd{97.45}{0.42} \\
            \cline{3-8}
            & & \multirow{2}{*}{primes (168)} & \multirow{2}{*}{\meanstd{73.70}{1.80}} & \multirow{2}{*}{\meanstd{56.77}{2.15}} & \meanstd{37.88}{9.01} & \meanstd{26.13}{2.87} & \meanstd{30.73}{4.79} \\
            \cline{6-8}
            & & & & & \meanstd{80.41}{2.34} & \meanstd{87.42}{2.23} & \meanstd{83.73}{1.35} \\
            \cline{3-8}
            & & \multirow{2}{*}{prime powers (193)} & \multirow{2}{*}{\meanstd{96.52}{0.77}} & \multirow{2}{*}{\meanstd{93.57}{1.05}} & \meanstd{96.13}{1.93} & \meanstd{88.16}{2.12} & \meanstd{91.94}{1.30} \\
            \cline{6-8}
            & & & & & \meanstd{96.59}{1.00} & \meanstd{98.98}{0.56} & \meanstd{97.77}{0.55} \\
            \cline{2-8}
            & \multicolumn{2}{c|}{\multirow{2}{*}{PC (9)}} & \multirow{2}{*}{\meanstd{84.52}{2.95}} & \multirow{2}{*}{\meanstd{68.65}{2.50}} & \meanstd{83.28}{5.46} & \meanstd{39.65}{5.08} & \meanstd{53.44}{4.55} \\
            \cline{6-8}
            & \multicolumn{1}{l}{} & & & & \meanstd{84.71}{3.31} & \meanstd{97.65}{1.01} & \meanstd{90.69}{1.98} \\
            \cline{2-8}
            & \multicolumn{2}{c|}{\multirow{2}{*}{PC (normalized, 9)}} & \multirow{2}{*}{\meanstd{85.46}{3.37}} & \multirow{2}{*}{\meanstd{68.72}{3.27}} & \meanstd{95.69}{3.81} & \meanstd{37.93}{6.79} & \meanstd{53.86}{6.74} \\
            \cline{6-8}
            & \multicolumn{1}{l}{} & & & & \meanstd{84.54}{3.81} & \meanstd{99.50}{0.45} & \meanstd{91.36}{2.15} \\
            \bottomrule
        \end{tabular}
        \caption{ Performance of decision tree (DT) and logistic regression (LR) models on distinguishing Galois groups of nonic extensions, using zeta coefficients (ZC) or polynomial coefficients (PC) as features, averaged over 5 different random splits. We report accuracy (ACC), balanced accuracy (BACC), per-class precision, recall, and F1. For the latter three metrics, top (resp. bottom) row corresponds to the class of $C_9$ (resp. $C_3^2$). The majority baseline is 77.57\% ($C_3^2$).}
        \label{tab:galois_9}
    \end{center}
\end{table}

\subsection{Sextic and decic fields}

\begin{table}[H]
% \scriptsize
    \begin{center}
        \begin{tabular}{c|c|c|c|c|c|c|c|c|c|c|c|c}
            \toprule
             & \multicolumn{2}{c|}{$a_{2^2}$} & \multicolumn{2}{c|}{$a_{3^2}$} & \multicolumn{2}{c|}{$a_{5^2}$} & \multicolumn{2}{c|}{$a_{7^2}$} & \multicolumn{2}{c|}{$a_{17^2}$} & \multicolumn{2}{c}{$a_{19^2}$} \\
            \midrule
            $G$ & $C_6$ & $S_3$ & $C_6$ & $S_3$ & $C_6$ & $S_3$ & $C_6$ & $S_3$ & $C_6$ & $S_3$ & $C_6$ & $S_3$ \\
            \midrule
            0 & 0.78 & 0.04 & 0.53 & 0.22 & 0.75 & 0.27 & 0.50 & 0.33 & 0.72 & 0.31 & 0.64 & 0.36 \\
            1 & - & 0.26 & 0.29 & 0.11 & - & 0.06 & 0.23 & - & - & 0.01 & 0.07 & - \\
            3 & 0.08 & 0.16 & 0.10 & 0.31 & 0.11 & 0.42 & 0.17 & 0.42 & 0.15 & 0.50 & 0.17 & 0.45 \\
            6 & 0.09 & 0.50 & 0.05 & 0.30 & 0.05 & 0.17 & 0.02 & 0.13 & 0.02 & 0.06 & 0.01 & 0.05 \\
            21 & 0.05 & 0.03 & 0.03 & 0.06 & 0.10 & 0.08 & 0.07 & 0.12 & 0.11 & 0.12 & 0.12 & 0.15 \\
            \bottomrule
        \end{tabular}
        \caption{Distribution of $a_{p^2}(K)$ with Galois group $G \in \{C_6, S_3\}$ for sextic extensions and $p = 2, 3, 5, 7, 17, 19$.}
        \label{tab:sextic_psq_dist}
    \end{center}
\end{table}

\begin{table}[H]
   \begin{center}
        \begin{tabular}{c|c|c|c|c|c|c|c|c|c|c|c|c}
            \toprule
            & \multicolumn{2}{c|}{$a_{4}$} & \multicolumn{2}{c|}{$a_{9}$} & \multicolumn{2}{c|}{$a_{25}$} & \multicolumn{2}{c|}{$a_{49}$} & \multicolumn{2}{c|}{$a_{289}$} & \multicolumn{2}{c}{$a_{361}$} \\
            \midrule
            $G$ & $C_{6}$ & $S_{3}$ & $C_{6}$ & $S_{3}$ & $C_{6}$ & $S_{3}$ & $C_{6}$ & $S_{3}$ & $C_{6}$ & $S_{3}$ & $C_{6}$ & $S_{3}$ \\
            \midrule
            0 & 0.75 & 0.25 & 0.27 & 0.73 & 0.30 & 0.70 & 0.19 & 0.81 & 0.27 & 0.73 & 0.22 & 0.78 \\
            1 & - & 1.00 & 0.30 & 0.70 & - & 1.00 & 1.00 & - & - & 1.00 & 1.00 & - \\
            3 & 0.07 & 0.93 & 0.05 & 0.95 & 0.04 & 0.96 & 0.06 & 0.94 & 0.05 & 0.95 & 0.05 & 0.95 \\
            6 & 0.03 & 0.97 & 0.02 & 0.98 & 0.04 & 0.96 & 0.03 & 0.97 & 0.04 & 0.96 & 0.04 & 0.96 \\
            21 & 0.18 & 0.82 & 0.07 & 0.93 & 0.15 & 0.85 & 0.09 & 0.91 & 0.12 & 0.88 & 0.11 & 0.89 \\
            \bottomrule
        \end{tabular}
        \caption{Distribution of Galois group $G \in \{C_6, S_3\}$ for sextic extensions and given $a_{p^2}(K)$ with $p = 2, 3, 5, 7, 17, 19$.}
        \label{tab:sextic_psq_dist_G}
   \end{center}
\end{table}

\begin{table}[H]
% \scriptsize
    \begin{center}
        \begin{tabular}{c|c|c|c|c|c|c|c|c|c|c|c|c}
            \toprule
             & \multicolumn{2}{c|}{$a_{2^3}$} & \multicolumn{2}{c|}{$a_{3^3}$} & \multicolumn{2}{c|}{$a_{5^3}$} & \multicolumn{2}{c|}{$a_{7^3}$} & \multicolumn{2}{c|}{$a_{17^3}$} & \multicolumn{2}{c}{$a_{19^3}$} \\
            \midrule
            $G$ & $C_6$ & $S_3$ & $C_6$ & $S_3$ & $C_6$ & $S_3$ & $C_6$ & $S_3$ & $C_6$ & $S_3$ & $C_6$ & $S_3$ \\
            \midrule
            0 & 0.37 & 0.42 & 0.32 & 0.31 & 0.45 & 0.47 & 0.37 & 0.39 & 0.51 & 0.51 & 0.47 & 0.44 \\
            1 & 0.26 & - & 0.35 & 0.09 & 0.12 & - & 0.18 & - & 0.03 & - & 0.06 & - \\
            2 & 0.22 & 0.04 & 0.19 & 0.23 & 0.28 & 0.27 & 0.24 & 0.33 & 0.33 & 0.31 & 0.29 & 0.36 \\
            4 & - & - & 0.06 & 0.01 & - & - & 0.11 & 0.04 & - & - & 0.04 & 0.01 \\
            10 & 0.09 & 0.50 & 0.05 & 0.30 & 0.05 & 0.17 & 0.02 & 0.13 & 0.02 & 0.06 & 0.01 & 0.05 \\
            56 & 0.05 & 0.04 & 0.03 & 0.06 & 0.10 & 0.08 & 0.07 & 0.12 & 0.11 & 0.12 & 0.12 & 0.15 \\
            \bottomrule
        \end{tabular}
        \caption{Distribution of $a_{p^3}(K)$ with Galois group $G \in \{C_6, S_3\}$ for sextic extensions and $p = 2, 3, 5, 7, 17, 19$.}
        \label{tab:sextic_pcb_dist}
    \end{center}
\end{table}

\begin{table}[H]
   \begin{center}
        \begin{tabular}{c|c|c|c|c|c|c|c|c|c|c|c|c}
            \toprule
            & \multicolumn{2}{c|}{$a_{2^3}$} & \multicolumn{2}{c|}{$a_{3^3}$} & \multicolumn{2}{c|}{$a_{5^3}$} & \multicolumn{2}{c|}{$a_{7^3}$} & \multicolumn{2}{c|}{$a_{17^3}$} & \multicolumn{2}{c}{$a_{19^3}$} \\
            \midrule
            $G$ & $C_{6}$ & $S_{3}$ & $C_{6}$ & $S_{3}$ & $C_{6}$ & $S_{3}$ & $C_{6}$ & $S_{3}$ & $C_{6}$ & $S_{3}$ & $C_{6}$ & $S_{3}$ \\
            \midrule
            0 & 0.12 & 0.88 & 0.14 & 0.86 & 0.13 & 0.87 & 0.13 & 0.87 & 0.13 & 0.87 & 0.14 & 0.86 \\
            1 & 1.00 & - & 0.37 & 0.63 & 1.00 & - & 1.00 & - & 1.00 & - & 1.00 & - \\
            2 & 0.46 & 0.54 & 0.11 & 0.89 & 0.14 & 0.86 & 0.10 & 0.90 & 0.14 & 0.86 & 0.11 & 0.89 \\
            4 & - & - & 0.48 & 0.52 & - & - & 0.32 & 0.68 & - & - & 0.39 & 0.61 \\
            10 & 0.03 & 0.97 & 0.02 & 0.98 & 0.04 & 0.96 & 0.03 & 0.97 & 0.04 & 0.96 & 0.04 & 0.96 \\
            56 & 0.18 & 0.82 & 0.07 & 0.93 & 0.15 & 0.85 & 0.09 & 0.91 & 0.12 & 0.88 & 0.11 & 0.89 \\
            \bottomrule
        \end{tabular}
        \caption{Distribution of Galois group $G \in \{C_6, S_3\}$ for sextic extensions and given $a_{p^3}(K)$ with $p = 2, 3, 5, 7, 17, 19$.}
        \label{tab:sextic_pcb_dist_G}
   \end{center}
\end{table}

\begin{table}[H]
    % \scriptsize
    \begin{center}
        \begin{tabular}{c|c|c|c|c|c|c|c}
            \toprule
            Model & \multicolumn{2}{c|}{Feature} & ACC & BACC & Precision & Recall & F1 \\
            \midrule
            \multirow{10}{*}{DT} & \multirow{8}{*}{ZC} & \multirow{2}{*}{all (1000)} & \multirow{2}{*}{\meanstd{98.96}{0.02}} & \multirow{2}{*}{\meanstd{97.95}{0.07}} & \meanstd{95.75}{0.18} & \meanstd{96.57}{0.14} & \meanstd{96.16}{0.08} \\
             \cline{6-8}
             & & & & & \meanstd{99.47}{0.03} & \meanstd{99.33}{0.02} & \meanstd{99.40}{0.01} \\
             \cline{3-8}
             & & \multirow{2}{*}{squares and cubes (38)} & \multirow{2}{*}{\meanstd{99.31}{0.04}} & \multirow{2}{*}{\meanstd{98.60}{0.13}} & \meanstd{97.24}{0.32} & \meanstd{97.64}{0.28} & \meanstd{97.44}{0.15} \\
             \cline{6-8}
             & & & & & \meanstd{99.63}{0.05} & \meanstd{99.57}{0.05} & \meanstd{99.60}{0.03} \\
             \cline{3-8}
             & & \multirow{2}{*}{primes (168)} & \multirow{2}{*}{\meanstd{89.66}{0.17}} & \multirow{2}{*}{\meanstd{80.38}{0.54}} & \meanstd{60.33}{0.69} & \meanstd{67.69}{1.11} & \meanstd{63.79}{0.63} \\
             \cline{6-8}
             & & & & & \meanstd{94.88}{0.22} & \meanstd{93.08}{0.13} & \meanstd{93.97}{0.10} \\
             \cline{3-8}
             & & \multirow{2}{*}{prime powers (193)} & \multirow{2}{*}{\meanstd{99.04}{0.04}} & \multirow{2}{*}{\meanstd{98.21}{0.09}} & \meanstd{95.81}{0.37} & \meanstd{97.09}{0.22} & \meanstd{96.45}{0.13} \\
             \cline{6-8}
             & & & & & \meanstd{99.55}{0.04} & \meanstd{99.34}{0.06} & \meanstd{99.44}{0.02} \\
             \cline{2-8}
             & \multicolumn{2}{c|}{\multirow{2}{*}{PC (6)}} & \multirow{2}{*}{\meanstd{97.56}{0.08}} & \multirow{2}{*}{\meanstd{94.65}{0.18}} & \meanstd{91.13}{0.35} & \meanstd{90.67}{0.36} & \meanstd{90.90}{0.26} \\
             \cline{6-8}
             & \multicolumn{1}{l}{} & & & & \meanstd{98.55}{0.06} & \meanstd{98.63}{0.06} & \meanstd{98.59}{0.05} \\
            \midrule
            \multirow{14}{*}{LR} & \multirow{10}{*}{ZC} & \multirow{2}{*}{all (1000)} & \multirow{2}{*}{\meanstd{98.40}{0.07}} & \multirow{2}{*}{\meanstd{95.90}{0.19}} & \meanstd{95.47}{0.38} & \meanstd{92.48}{0.40} & \meanstd{93.95}{0.24} \\
            \cline{6-8}
            & & & & & \meanstd{98.84}{0.06} & \meanstd{99.32}{0.07} & \meanstd{99.08}{0.04} \\
            \cline{3-8}
            & & \multirow{2}{*}{all (normalized, 1000)} & \multirow{2}{*}{\meanstd{97.80}{0.10}} & \multirow{2}{*}{\meanstd{94.35}{0.21}} & \meanstd{93.75}{0.45} & \meanstd{89.63}{0.40} & \meanstd{91.65}{0.33} \\
            \cline{6-8}
            & & & & & \meanstd{98.40}{0.07} & \meanstd{99.07}{0.08} & \meanstd{98.73}{0.06} \\
            \cline{3-8}
            & & \multirow{2}{*}{squares and cubes (38)} & \multirow{2}{*}{\meanstd{96.51}{0.13}} & \multirow{2}{*}{\meanstd{89.79}{0.50}} & \meanstd{92.50}{0.18} & \meanstd{80.60}{1.02} & \meanstd{86.14}{0.59} \\
            \cline{6-8}
            & & & & & \meanstd{97.04}{0.15} & \meanstd{98.98}{0.03} & \meanstd{98.00}{0.08} \\
            \cline{3-8}
            & & \multirow{2}{*}{primes (168)} & \multirow{2}{*}{\meanstd{91.34}{0.15}} & \multirow{2}{*}{\meanstd{69.36}{0.29}} & \meanstd{91.56}{0.67} & \meanstd{39.27}{0.58} & \meanstd{54.97}{0.56} \\
            \cline{6-8}
            & & & & & \meanstd{91.33}{0.19} & \meanstd{99.44}{0.04} & \meanstd{95.21}{0.09} \\
            \cline{3-8}
            & & \multirow{2}{*}{prime powers (193)} & \multirow{2}{*}{\meanstd{98.48}{0.03}} & \multirow{2}{*}{\meanstd{95.99}{0.13}} & \meanstd{95.98}{0.29} & \meanstd{92.58}{0.30} & \meanstd{94.25}{0.07} \\
            \cline{6-8}
            & & & & & \meanstd{98.85}{0.05} & \meanstd{99.40}{0.05} & \meanstd{99.12}{0.02} \\
            \cline{2-8}
            & \multicolumn{2}{c|}{\multirow{2}{*}{PC (6)}} & \multirow{2}{*}{\meanstd{26.22}{4.75}} & \multirow{2}{*}{\meanstd{50.69}{2.70}} & \meanstd{13.58}{0.74} & \meanstd{84.15}{12.45} & \meanstd{23.36}{1.54} \\
            \cline{6-8}
            & \multicolumn{1}{l}{} & & & & \meanstd{90.48}{6.47} & \meanstd{17.22}{7.35} & \meanstd{27.97}{10.44} \\
            \cline{2-8}
            & \multicolumn{2}{c|}{\multirow{2}{*}{PC (normalized, 6)}} & \multirow{2}{*}{\meanstd{86.73}{0.50}} & \multirow{2}{*}{\meanstd{50.72}{1.22}} & \meanstd{99.26}{1.48} & \meanstd{1.44}{2.45} & \meanstd{2.73}{4.59} \\
            \cline{6-8}
            & \multicolumn{1}{l}{} & & & & \meanstd{86.71}{0.47} & \meanstd{99.99}{0.01} & \meanstd{92.88}{0.26} \\
            \bottomrule
        \end{tabular}
        \caption{ Performance of decision tree (DT) and logistic regression (LR) models on distinguishing Galois groups of sextic extensions, using zeta coefficients (ZC) or polynomial coefficients (PC) as features, averaged over 5 different random splits. We report accuracy (ACC), balanced accuracy (BACC), per-class precision, recall, and F1. For the latter three metrics, top (resp. bottom) row corresponds to the class of $C_6$ (resp. $S_3$). The majority baseline is 86.55\% ($S_3$).}
        \label{tab:galois_6}
    \end{center}
\end{table}

\begin{table}[H]
% \scriptsize
    \begin{center}
        \begin{tabular}{c|c|c|c|c|c|c|c|c|c|c|c|c}
            \toprule
             & \multicolumn{2}{c|}{$a_{2^2}$} & \multicolumn{2}{c|}{$a_{3^2}$} & \multicolumn{2}{c|}{$a_{5^2}$} & \multicolumn{2}{c|}{$a_{7^2}$} & \multicolumn{2}{c|}{$a_{11^2}$} & \multicolumn{2}{c}{$a_{19^2}$} \\
            \midrule
            $G$ & $C_{10}$ & $D_5$ & $C_{10}$ & $D_5$ & $C_{10}$ & $D_5$ & $C_{10}$ & $D_5$ & $C_{10}$ & $D_5$ & $C_{10}$ & $D_5$ \\
            \midrule
            0 & 0.92 & 0.30 & 0.91 & 0.33 & 0.66 & 0.38 & 0.86 & 0.41 & 0.39 & 0.40 & 0.92 & 0.41 \\
            1 & - & - & - & - & 0.18 & 0.06 & - & - & 0.39 & - & - & 0.02 \\
            3 & - & - & - & - & 0.06 & 0.01 & - & - & 0.16 & 0.04 & - & - \\
            5 & 0.03 & 0.34 & 0.04 & 0.38 & 0.04 & 0.38 & 0.06 & 0.43 & 0.03 & 0.44 & 0.04 & 0.46 \\
            15 & 0.03 & 0.32 & 0.02 & 0.24 & 0.02 & 0.13 & 0.02 & 0.11 & 0.00 & 0.06 & 0.00 & 0.05 \\
            55 & 0.02 & 0.04 & 0.03 & 0.04 & 0.04 & 0.04 & 0.05 & 0.05 & 0.03 & 0.06 & 0.04 & 0.06 \\
            \bottomrule
        \end{tabular}
        \caption{Distribution of $a_{p^2}(K)$ with Galois group $G \in \{C_{10}, D_5\}$ for decic extensions and $p = 2, 3, 5, 7, 11, 19$.}
        \label{tab:decic_psq_dist}
    \end{center}
\end{table}

\begin{table}[H]
% \scriptsize
    \begin{center}
        \begin{tabular}{c|c|c|c|c|c|c|c|c|c|c|c|c}
            \toprule
             & \multicolumn{2}{c|}{$a_{2^2}$} & \multicolumn{2}{c|}{$a_{3^2}$} & \multicolumn{2}{c|}{$a_{5^2}$} & \multicolumn{2}{c|}{$a_{7^2}$} & \multicolumn{2}{c|}{$a_{11^2}$} & \multicolumn{2}{c}{$a_{19^2}$} \\
            \midrule
            $G$ & $C_{10}$ & $D_5$ & $C_{10}$ & $D_5$ & $C_{10}$ & $D_5$ & $C_{10}$ & $D_5$ & $C_{10}$ & $D_5$ & $C_{10}$ & $D_5$ \\
            \midrule
            0 & 0.61 & 0.39 & 0.58 & 0.42 & 0.47 & 0.53 & 0.52 & 0.48 & 0.33 & 0.67 & 0.53 & 0.47 \\
            1 & - & - & - & - & 0.61 & 0.39 & - & - & 1.00 & - & - & 1.00 \\
            3 & - & - & - & - & 0.77 & 0.23 & - & - & 0.67 & 0.33 & - & - \\
            5 & 0.04 & 0.96 & 0.05 & 0.95 & 0.05 & 0.95 & 0.07 & 0.93 & 0.04 & 0.96 & 0.05 & 0.95 \\
            15 & 0.04 & 0.96 & 0.05 & 0.95 & 0.08 & 0.92 & 0.07 & 0.93 & 0.03 & 0.97 & 0.01 & 0.99 \\
            55 & 0.25 & 0.75 & 0.29 & 0.71 & 0.34 & 0.66 & 0.37 & 0.63 & 0.18 & 0.82 & 0.25 & 0.75 \\
            \bottomrule
        \end{tabular}
        \caption{Distribution of Galois group $G \in \{C_{10}, D_5\}$ for  decic extensions and given $a_{ p^2}(K)$ with $p = 2, 3, 5, 7, 11, 19$.}
        \label{tab:decic_psq_dist_G}
    \end{center}
\end{table}

\begin{table}[H]
% \scriptsize
    \begin{center}
        \begin{tabular}{c|c|c|c|c|c|c|c|c}
            \toprule
             & \multicolumn{2}{c|}{$a_{2^5}$} & \multicolumn{2}{c|}{$a_{3^5}$} & \multicolumn{2}{c|}{$a_{5^5}$} & \multicolumn{2}{c}{$a_{7^5}$} \\
            \midrule
            $G$ & $C_{10}$ & $D_5$ & $C_{10}$ & $D_5$ & $C_{10}$ & $D_5$ & $C_{10}$ & $D_5$ \\
            \midrule
            0 & 0.35 & 0.34 & 0.39 & 0.38 & 0.35 & 0.39 & 0.47 & 0.43 \\
            1 & 0.32 & - & 0.22 & - & 0.26 & 0.05 & 0.09 & - \\
            2 & 0.28 & 0.30 & 0.33 & 0.33 & 0.26 & 0.38 & 0.37 & 0.41 \\
            6 & - & - & - & - & 0.06 & 0.01 & - & - \\
            126 & 0.03 & 0.32 & 0.02 & 0.24 & 0.02 & 0.13 & 0.02 & 0.11 \\
            2002 & 0.02 & 0.04 & 0.03 & 0.04 & 0.04 & 0.04 & 0.06 & 0.05 \\
            \bottomrule
        \end{tabular}
        \caption{Distribution of $a_{p^5}(K)$ with Galois group $G \in \{C_{10}, D_5\}$ for decic extensions and $p = 2, 3, 5, 7$.}
        \label{tab:decic_pqu_dist}
    \end{center}
\end{table}

\begin{table}[H]
% \scriptsize
    \begin{center}
        \begin{tabular}{c|c|c|c|c|c|c|c|c}
            \toprule
             & \multicolumn{2}{c|}{$a_{2^5}$} & \multicolumn{2}{c|}{$a_{3^5}$} & \multicolumn{2}{c|}{$a_{5^5}$} & \multicolumn{2}{c}{$a_{7^5}$} \\
            \midrule
            $G$ & $C_{10}$ & $D_5$ & $C_{10}$ & $D_5$ & $C_{10}$ & $D_5$ & $C_{10}$ & $D_5$ \\
            \midrule
            0 & 0.35 & 0.65 & 0.35 & 0.65 & 0.32 & 0.68 & 0.36 & 0.64 \\
            1 & 1.00 & - & 1.00 & - & 0.72 & 0.28 & 1.00 & - \\
            2 & 0.32 & 0.68 & 0.34 & 0.66 & 0.26 & 0.74 & 0.32 & 0.68 \\
            6 & - & - & - & - & 0.77 & 0.23 & - & - \\
            126 & 0.04 & 0.96 & 0.05 & 0.95 & 0.08 & 0.92 & 0.07 & 0.93 \\
            2002 & 0.25 & 0.75 & 0.29 & 0.71 & 0.34 & 0.66 & 0.37 & 0.63 \\
            \bottomrule
        \end{tabular}
        \caption{Distribution of Galois group $G \in \{C_{10}, D_5\}$ for sextic extensions and given $a_{ p^5}(K)$ with $p = 2, 3, 5, 7$.}
        \label{tab:decic_pqu_dist_G}
    \end{center}
\end{table}

\begin{table}[H]
    % \scriptsize
    \begin{center}
        \begin{tabular}{c|c|c|c|c|c|c|c}
            \toprule
            Model & \multicolumn{2}{c|}{Feature} & ACC & BACC & Precision & Recall & F1 \\
            \midrule
            \multirow{10}{*}{DT} & \multirow{8}{*}{ZC} & \multirow{2}{*}{all ($n \le 1000$)} & \multirow{2}{*}{\meanstd{97.19}{0.47}} & \multirow{2}{*}{\meanstd{96.92}{0.62}} & \meanstd{95.61}{0.66} & \meanstd{96.11}{1.18} & \meanstd{95.85}{0.71} \\
             \cline{6-8}
             & & & & & \meanstd{98.01}{0.57} & \meanstd{97.73}{0.39} & \meanstd{97.87}{0.36} \\
             \cline{3-8}
             & & \multirow{2}{*}{squares and quintics (33)} & \multirow{2}{*}{\meanstd{98.56}{0.24}} & \multirow{2}{*}{\meanstd{98.46}{0.26}} & \meanstd{97.60}{0.52} & \meanstd{98.16}{0.45} & \meanstd{97.88}{0.35} \\
             \cline{6-8}
             & & & & & \meanstd{99.06}{0.21} & \meanstd{98.76}{0.30} & \meanstd{98.91}{0.18} \\
             \cline{3-8}
             & & \multirow{2}{*}{primes (168)} & \multirow{2}{*}{\meanstd{85.50}{0.43}} & \multirow{2}{*}{\meanstd{84.77}{0.46}} & \meanstd{76.50}{0.66} & \meanstd{82.53}{0.67} & \meanstd{79.40}{0.56} \\
             \cline{6-8}
             & & & & & \meanstd{90.68}{0.48} & \meanstd{87.02}{0.40} & \meanstd{88.81}{0.39} \\
             \cline{3-8}
             & & \multirow{2}{*}{prime powers (193)} & \multirow{2}{*}{\meanstd{98.14}{0.43}} & \multirow{2}{*}{\meanstd{97.85}{0.57}} & \meanstd{97.50}{0.58} & \meanstd{96.97}{1.02} & \meanstd{97.23}{0.67} \\
             \cline{6-8}
             & & & & & \meanstd{98.46}{0.50} & \meanstd{98.73}{0.28} & \meanstd{98.60}{0.32} \\
             \cline{2-8}
             & \multicolumn{2}{c|}{\multirow{2}{*}{PC (10)}} & \multirow{2}{*}{\meanstd{86.71}{0.96}} & \multirow{2}{*}{\meanstd{85.32}{1.07}} & \meanstd{79.96}{1.58} & \meanstd{81.06}{1.69} & \meanstd{80.50}{1.40} \\
             \cline{6-8}
             & \multicolumn{1}{l}{} & & & & \meanstd{90.24}{0.83} & \meanstd{89.59}{0.98} & \meanstd{89.91}{0.77} \\
            \midrule
            \multirow{14}{*}{LR} & \multirow{10}{*}{ZC} & \multirow{2}{*}{all (1000)} & \multirow{2}{*}{\meanstd{93.81}{0.54}} & \multirow{2}{*}{\meanstd{93.51}{0.60}} & \meanstd{89.53}{0.77} & \meanstd{92.57}{0.91} & \meanstd{91.02}{0.65} \\
            \cline{6-8}
            & & & & & \meanstd{96.12}{0.55} & \meanstd{94.45}{0.54} & \meanstd{95.28}{0.46} \\
            \cline{3-8}
            & & \multirow{2}{*}{all (normalized, 1000)} & \multirow{2}{*}{\meanstd{94.70}{0.63}} & \multirow{2}{*}{\meanstd{94.20}{0.70}} & \meanstd{91.77}{1.01} & \meanstd{92.65}{0.94} & \meanstd{92.21}{0.97} \\
            \cline{6-8}
            & & & & & \meanstd{96.23}{0.44} & \meanstd{95.75}{0.51} & \meanstd{95.99}{0.47} \\
            \cline{3-8}
            & & \multirow{2}{*}{squares and quintics (33)} & \multirow{2}{*}{\meanstd{94.47}{1.23}} & \multirow{2}{*}{\meanstd{93.91}{1.36}} & \meanstd{91.55}{1.91} & \meanstd{92.18}{1.89} & \meanstd{91.86}{1.78} \\
            \cline{6-8}
            & & & & & \meanstd{95.98}{0.99} & \meanstd{95.64}{1.02} & \meanstd{95.81}{0.94} \\
            \cline{3-8}
            & & \multirow{2}{*}{primes (168)} & \multirow{2}{*}{\meanstd{85.88}{0.50}} & \multirow{2}{*}{\meanstd{82.53}{0.80}} & \meanstd{83.91}{1.83} & \meanstd{72.16}{1.90} & \meanstd{77.56}{1.16} \\
            \cline{6-8}
            & & & & & \meanstd{86.71}{0.72} & \meanstd{92.91}{0.89} & \meanstd{89.70}{0.31} \\
            \cline{3-8}
            & & \multirow{2}{*}{prime powers (193)} & \multirow{2}{*}{\meanstd{96.57}{0.45}} & \multirow{2}{*}{\meanstd{96.14}{0.53}} & \meanstd{95.06}{1.23} & \meanstd{94.82}{1.26} & \meanstd{94.93}{0.61} \\
            \cline{6-8}
            & & & & & \meanstd{97.36}{0.63} & \meanstd{97.46}{0.72} & \meanstd{97.40}{0.35} \\
            \cline{2-8}
            & \multicolumn{2}{c|}{\multirow{2}{*}{PC (10)}} & \multirow{2}{*}{\meanstd{46.25}{4.62}} & \multirow{2}{*}{\meanstd{56.92}{4.79}} & \meanstd{37.71}{2.99} & \meanstd{89.82}{8.35} & \meanstd{53.05}{3.96} \\
            \cline{6-8}
            & \multicolumn{1}{l}{} & & & & \meanstd{83.46}{10.91} & \meanstd{24.02}{5.97} & \meanstd{36.84}{7.05} \\
            \cline{2-8}
            & \multicolumn{2}{c|}{\multirow{2}{*}{PC (normalized, 10)}} & \multirow{2}{*}{\meanstd{66.18}{0.79}} & \multirow{2}{*}{\meanstd{50.06}{0.11}} & \meanstd{20.00}{40.00} & \meanstd{0.11}{0.22} & \meanstd{0.22}{0.45} \\
            \cline{6-8}
            & \multicolumn{1}{l}{} & & & & \meanstd{66.17}{0.77} & \meanstd{100.00}{0.00} & \meanstd{79.64}{0.55} \\
            \bottomrule
        \end{tabular}
        \caption{ Performance of decision tree (DT) and logistic regression (LR) models on distinguishing Galois groups of decic extensions, using zeta coefficients (ZC) or polynomial coefficients (PC) as features, averaged over 5 different random splits. We report accuracy (ACC), balanced accuracy (BACC), per-class precision, recall, and F1. For the latter three metrics, top (resp. bottom) row corresponds to the class of $C_{10}$ (resp. $D_5$). The majority baseline is 66.14\% ($D_5$).}
        \label{tab:galois_10}
    \end{center}
\end{table}

\subsection{Octic fields} 

\begin{table}[H]
% \scriptsize
    \begin{center}
        \begin{tabular}{c|c|c|c|c|c|c|c|c|c|c|c|c}
            \toprule
             & \multicolumn{2}{c|}{$a_{2^2}$} & \multicolumn{2}{c|}{$a_{3^2}$} & \multicolumn{2}{c|}{$a_{5^2}$} & \multicolumn{2}{c|}{$a_{7^2}$} & \multicolumn{2}{c|}{$a_{11^2}$} & \multicolumn{2}{c}{$a_{13^2}$} \\
            \midrule
            $G$ & $D_4$ & $Q_8$ & $D_4$ & $Q_8$ & $D_4$ & $Q_8$ & $D_4$ & $Q_8$ & $D_4$ & $Q_8$ & $D_4$ & $Q_8$ \\
            \midrule
            0 & 0.11 & 0.24 & 0.21 & 0.30 & 0.26 & 0.38 & 0.24 & 0.41 & 0.28 & 0.46 & 0.30 & 0.50 \\
            1 & 0.32 & 0.48 & 0.10 & 0.50 & 0.13 & 0.09 & 0.04 & 0.38 & 0.02 & 0.32 & 0.06 & 0.07 \\
            2 & 0.17 & 0.06 & 0.18 & 0.10 & 0.05 & 0.41 & 0.10 & 0.08 & 0.07 & 0.07 & 0.01 & 0.29 \\
            3 & 0.10 & 0.15 & - & - & - & - & - & - & - & - & - & - \\
            4 & 0.17 & 0.03 & 0.37 & 0.03 & 0.41 & 0.05 & 0.50 & 0.05 & 0.52 & 0.07 & 0.51 & 0.06 \\
            10 & 0.11 & 0.01 & 0.11 & 0.03 & 0.09 & 0.02 & 0.07 & 0.02 & 0.04 & 0.02 & 0.04 & 0.02 \\
            36 & 0.02 & 0.03 & 0.04 & 0.03 & 0.05 & 0.05 & 0.06 & 0.06 & 0.07 & 0.07 & 0.08 & 0.06 \\
            \bottomrule
        \end{tabular}
        \caption{Distribution of $a_{p^2}(K)$ for nonabelian octic extensions and $p = 2, 3, 5, 7, 11, 13$. Note that $a_{p^2}(K) \ne 3$ for odd $p$.}
        \label{tab:octic_nab_psq_dist}
    \end{center}
\end{table}

\begin{table}[H]
% \scriptsize
    \begin{center}
        \begin{tabular}{c|c|c|c|c|c|c|c|c|c|c|c|c}
            \toprule
             & \multicolumn{2}{c|}{$a_{2^2}$} & \multicolumn{2}{c|}{$a_{3^2}$} & \multicolumn{2}{c|}{$a_{5^2}$} & \multicolumn{2}{c|}{$a_{7^2}$} & \multicolumn{2}{c|}{$a_{11^2}$} & \multicolumn{2}{c}{$a_{13^2}$} \\
            \midrule
            $G$ & $D_4$ & $Q_8$ & $D_4$ & $Q_8$ & $D_4$ & $Q_8$ & $D_4$ & $Q_8$ & $D_4$ & $Q_8$ & $D_4$ & $Q_8$ \\
            \midrule
            0 & 0.21 & 0.79 & 0.28 & 0.72 & 0.28 & 0.72 & 0.24 & 0.76 & 0.25 & 0.75 & 0.25 & 0.75 \\
            1 & 0.27 & 0.73 & 0.10 & 0.90 & - & - & 0.05 & 0.95 & 0.03 & 0.97 & - & - \\
            2 & 0.62 & 0.38 & 0.49 & 0.51 & 0.45 & 0.55 & 0.41 & 0.59 & 0.35 & 0.65 & 0.33 & 0.67 \\
            3 & 0.27 & 0.73 & - & - & 0.07 & 0.93 & - & - & - & - & 0.03 & 0.97 \\
            4 & 0.77 & 0.23 & 0.86 & 0.14 & 0.83 & 0.17 & 0.84 & 0.16 & 0.81 & 0.19 & 0.82 & 0.18 \\
            10 & 0.80 & 0.20 & 0.68 & 0.32 & 0.66 & 0.34 & 0.64 & 0.36 & 0.52 & 0.48 & 0.54 & 0.46 \\
            36 & 0.29 & 0.71 & 0.38 & 0.62 & 0.38 & 0.62 & 0.36 & 0.64 & 0.36 & 0.64 & 0.41 & 0.59 \\
            \bottomrule
        \end{tabular}
        \caption{Distribution of Galois group $G \in \{D_4, Q_8\}$ for octic nonabelian extensions and given $a_{p^2}(K)$ with $p = 2, 3, 5, 7, 11, 13$.}
        \label{tab:octic_nab_psq_dist_G}
    \end{center}
\end{table}

\begin{table}[H]
    % \scriptsize
    \begin{center}
        \begin{tabular}{c|c|c|c|c|c|c|c}
            \toprule
            Model & \multicolumn{2}{c|}{Feature} & ACC & BACC & Precision & Recall & F1 \\
            \midrule
            \multirow{15}{*}{DT} & \multirow{12}{*}{ZC} & \multirow{3}{*}{all (1000)} & \multirow{3}{*}{\meanstd{98.61}{0.16}} & \multirow{3}{*}{\meanstd{98.17}{0.26}} & \meanstd{96.06}{0.84} & \meanstd{95.79}{0.79} & \meanstd{95.92}{0.48} \\
             \cline{6-8}
             & & & & & \meanstd{98.66}{0.27} & \meanstd{98.74}{0.22} & \meanstd{98.70}{0.14} \\
             \cline{6-8}
             & & & & & \meanstd{99.96}{0.07} & \meanstd{100.00}{0.00} & \meanstd{99.98}{0.04} \\
             \cline{3-8}
             & & \multirow{3}{*}{squares (31)} & \multirow{3}{*}{\meanstd{98.91}{0.18}} & \multirow{3}{*}{\meanstd{98.66}{0.25}} & \meanstd{96.59}{0.55} & \meanstd{97.11}{0.63} & \meanstd{96.85}{0.49} \\
             \cline{6-8}
             & & & & & \meanstd{99.08}{0.22} & \meanstd{98.88}{0.17} & \meanstd{98.98}{0.17} \\
             \cline{6-8}
             & & & & & \meanstd{99.94}{0.07} & \meanstd{100.00}{0.00} & \meanstd{99.97}{0.03} \\
             \cline{3-8}
             & & \multirow{3}{*}{primes (168)} & \multirow{3}{*}{\meanstd{77.43}{0.49}} & \multirow{3}{*}{\meanstd{74.65}{0.45}} & \meanstd{70.84}{0.66} & \meanstd{65.27}{1.37} & \meanstd{67.93}{0.78} \\
             \cline{6-8}
             & & & & & \meanstd{80.50}{0.30} & \meanstd{81.34}{0.81} & \meanstd{80.91}{0.48} \\
             \cline{6-8}
             & & & & & \meanstd{75.37}{1.41} & \meanstd{77.34}{0.78} & \meanstd{76.34}{0.98} \\
             \cline{3-8}
             & & \multirow{3}{*}{prime powers (193)} & \multirow{3}{*}{\meanstd{98.82}{0.16}} & \multirow{3}{*}{\meanstd{98.48}{0.29}} & \meanstd{96.49}{0.71} & \meanstd{96.56}{0.94} & \meanstd{96.52}{0.45} \\
             \cline{6-8}
             & & & & & \meanstd{98.91}{0.32} & \meanstd{98.89}{0.23} & \meanstd{98.90}{0.15} \\
             \cline{6-8}
             & & & & & \meanstd{100.00}{0.00} & \meanstd{100.00}{0.00} & \meanstd{100.00}{0.00} \\
             \cline{2-8}
             & \multicolumn{2}{c|}{\multirow{3}{*}{PC (8)}} & \multirow{3}{*}{\meanstd{77.21}{0.56}} & \multirow{3}{*}{\meanstd{77.48}{0.60}} & \meanstd{83.02}{0.74} & \meanstd{82.86}{0.74} & \meanstd{82.94}{0.51} \\
             \cline{6-8}
             & \multicolumn{1}{l}{} & & & & \meanstd{78.72}{0.55} & \meanstd{79.16}{0.43} & \meanstd{78.94}{0.42} \\
             \cline{6-8}
             & \multicolumn{1}{l}{} & & & & \meanstd{71.05}{1.07} & \meanstd{70.42}{1.50} & \meanstd{70.73}{1.17} \\
            \midrule
            \multirow{21}{*}{LR} & \multirow{15}{*}{ZC} & \multirow{3}{*}{all (1000)} & \multirow{3}{*}{\meanstd{93.81}{0.13}} & \multirow{3}{*}{\meanstd{92.04}{0.19}} & \meanstd{89.03}{1.10} & \meanstd{83.45}{0.65} & \meanstd{86.14}{0.21} \\
             \cline{6-8}
             & & & & & \meanstd{93.75}{0.31} & \meanstd{94.94}{0.41} & \meanstd{94.34}{0.16} \\
             \cline{6-8}
             & & & & & \meanstd{96.46}{0.19} & \meanstd{97.71}{0.37} & \meanstd{97.08}{0.24} \\
             \cline{3-8}
             & & \multirow{3}{*}{all (normalized, 1000)} & \multirow{3}{*}{\meanstd{92.88}{0.20}} & \multirow{3}{*}{\meanstd{90.51}{0.15}} & \meanstd{85.31}{1.25} & \meanstd{78.99}{0.74} & \meanstd{82.02}{0.29} \\
             \cline{6-8}
             & & & & & \meanstd{92.56}{0.28} & \meanstd{94.38}{0.58} & \meanstd{93.46}{0.22} \\
             \cline{6-8}
             & & & & & \meanstd{97.50}{0.28} & \meanstd{98.16}{0.28} & \meanstd{97.83}{0.21} \\
             \cline{3-8}
             & & \multirow{3}{*}{squares (31)} & \multirow{3}{*}{\meanstd{85.08}{0.73}} & \multirow{3}{*}{\meanstd{81.87}{0.95}} & \meanstd{83.45}{2.29} & \meanstd{64.86}{1.13} & \meanstd{72.98}{1.37} \\
             \cline{6-8}
             & & & & & \meanstd{86.29}{1.12} & \meanstd{86.36}{0.52} & \meanstd{86.32}{0.64} \\
             \cline{6-8}
             & & & & & \meanstd{83.83}{0.45} & \meanstd{94.41}{1.96} & \meanstd{88.79}{0.66} \\
             \cline{3-8}
             & & \multirow{3}{*}{primes (168)} & \multirow{3}{*}{\meanstd{57.99}{0.39}} & \multirow{3}{*}{\meanstd{46.31}{0.45}} & \meanstd{49.11}{2.42} & \meanstd{19.04}{0.54} & \meanstd{27.43}{0.80} \\
             \cline{6-8}
             & & & & & \meanstd{59.73}{0.36} & \meanstd{80.72}{0.64} & \meanstd{68.65}{0.28} \\
             \cline{6-8}
             & & & & & \meanstd{54.81}{1.33} & \meanstd{39.18}{1.00} & \meanstd{45.69}{1.10} \\
             \cline{3-8}
             & & \multirow{3}{*}{prime powers (193)} & \multirow{3}{*}{\meanstd{94.86}{0.28}} & \multirow{3}{*}{\meanstd{92.99}{0.35}} & \meanstd{90.70}{1.38} & \meanstd{84.06}{0.91} & \meanstd{87.24}{0.56} \\
             \cline{6-8}
             & & & & & \meanstd{94.39}{0.42} & \meanstd{96.15}{0.46} & \meanstd{95.26}{0.29} \\
             \cline{6-8}
             & & & & & \meanstd{97.93}{0.30} & \meanstd{98.77}{0.28} & \meanstd{98.35}{0.22} \\
             \cline{2-8}
             & \multicolumn{2}{c|}{\multirow{3}{*}{PC (8)}} & \multirow{3}{*}{\meanstd{19.11}{2.19}} & \multirow{3}{*}{\meanstd{32.43}{0.42}} & \meanstd{16.45}{0.64} & \meanstd{77.52}{20.78} & \meanstd{26.85}{1.94} \\
             \cline{6-8}
             & \multicolumn{1}{l}{} & & & & \meanstd{31.72}{25.96} & \meanstd{0.28}{0.25} & \meanstd{0.55}{0.50} \\
             \cline{6-8}
             & \multicolumn{1}{l}{} & & & & \meanstd{34.93}{5.83} & \meanstd{19.51}{19.91} & \meanstd{17.39}{13.96} \\
             \cline{2-8}
             & \multicolumn{2}{c|}{\multirow{3}{*}{PC (normalized, 8)}} & \multirow{3}{*}{\meanstd{64.89}{0.44}} & \multirow{3}{*}{\meanstd{54.28}{0.42}} & \meanstd{90.86}{0.31} & \meanstd{34.75}{1.16} & \meanstd{50.26}{1.23} \\
             \cline{6-8}
             & \multicolumn{1}{l}{} & & & & \meanstd{62.31}{0.63} & \meanstd{88.26}{0.47} & \meanstd{73.05}{0.46} \\
             \cline{6-8}
             & \multicolumn{1}{l}{} & & & & \meanstd{66.40}{1.01} & \meanstd{39.82}{0.45} & \meanstd{49.78}{0.44} \\
            \bottomrule
        \end{tabular}
        \caption{ Performance of decision tree (DT) and logistic regression (LR) models on distinguishing Galois groups of abelian octic extensions, using zeta coefficients (ZC) or polynomial coefficients (PC) as features, averaged over 5 different random splits. We report accuracy (ACC), balanced accuracy (BACC), per-class precision, recall, and F1. For the latter three metrics, each of three rows correspond to the class of $C_8$, $C_4 \times C_2$, $C_2^3$. The majority baseline is 54.03\% ($C_4 \times C_2$).}
        \label{tab:galois_8_ab}
    \end{center}
\end{table}

\begin{table}[H]
    % \scriptsize
    \begin{center}
        \begin{tabular}{c|c|c|c|c|c|c|c}
            \toprule
            Model & \multicolumn{2}{c|}{Feature} & ACC & BACC & Precision & Recall & F1 \\
            \midrule
            \multirow{10}{*}{DT} & \multirow{8}{*}{ZC} & \multirow{2}{*}{all (1000)} & \multirow{2}{*}{\meanstd{98.67}{0.08}} & \multirow{2}{*}{\meanstd{98.52}{0.10}} & \meanstd{98.25}{0.05} & \meanstd{98.00}{0.19} & \meanstd{98.13}{0.10} \\
             \cline{6-8}
             & & & & & \meanstd{98.89}{0.12} & \meanstd{99.04}{0.04} & \meanstd{98.96}{0.07} \\
             \cline{3-8}
             & & \multirow{2}{*}{squares (31)} & \multirow{2}{*}{\meanstd{99.17}{0.09}} & \multirow{2}{*}{\meanstd{99.09}{0.11}} & \meanstd{98.82}{0.08} & \meanstd{98.84}{0.20} & \meanstd{98.83}{0.11} \\
             \cline{6-8}
             & & & & & \meanstd{99.36}{0.12} & \meanstd{99.35}{0.05} & \meanstd{99.35}{0.07} \\
             \cline{3-8}
             & & \multirow{2}{*}{primes (168)} & \multirow{2}{*}{\meanstd{84.50}{0.43}} & \multirow{2}{*}{\meanstd{83.16}{0.48}} & \meanstd{78.07}{0.42} & \meanstd{78.52}{0.87} & \meanstd{78.30}{0.51} \\
             \cline{6-8}
             & & & & & \meanstd{88.08}{0.58} & \meanstd{87.80}{0.31} & \meanstd{87.94}{0.38} \\
             \cline{3-8}
             & & \multirow{2}{*}{prime powers (193)} & \multirow{2}{*}{\meanstd{98.85}{0.06}} & \multirow{2}{*}{\meanstd{98.73}{0.07}} & \meanstd{98.46}{0.24} & \meanstd{98.32}{0.20} & \meanstd{98.39}{0.08} \\
             \cline{6-8}
             & & & & & \meanstd{99.07}{0.12} & \meanstd{99.15}{0.13} & \meanstd{99.11}{0.05} \\
             \cline{2-8}
             & \multicolumn{2}{c|}{\multirow{2}{*}{PC (8)}} & \multirow{2}{*}{\meanstd{98.76}{0.11}} & \multirow{2}{*}{\meanstd{98.64}{0.13}} & \meanstd{98.29}{0.14} & \meanstd{98.23}{0.21} & \meanstd{98.26}{0.16} \\
             \cline{6-8}
             & \multicolumn{1}{l}{} & & & & \meanstd{99.02}{0.12} & \meanstd{99.05}{0.07} & \meanstd{99.04}{0.08} \\
            \midrule
            \multirow{14}{*}{LR} & \multirow{10}{*}{ZC} & \multirow{2}{*}{all (1000)} & \multirow{2}{*}{\meanstd{97.51}{0.13}} & \multirow{2}{*}{\meanstd{97.16}{0.16}} & \meanstd{97.07}{0.14} & \meanstd{95.91}{0.25} & \meanstd{96.49}{0.19} \\
             \cline{6-8}
             & & & & & \meanstd{97.75}{0.15} & \meanstd{98.40}{0.07} & \meanstd{98.08}{0.11} \\
             \cline{3-8}
             & & \multirow{2}{*}{all (normalized, 1000)} & \multirow{2}{*}{\meanstd{97.54}{0.10}} & \multirow{2}{*}{\meanstd{97.16}{0.14}} & \meanstd{97.21}{0.21} & \meanstd{95.84}{0.33} & \meanstd{96.52}{0.12} \\
             \cline{6-8}
             & & & & & \meanstd{97.72}{0.21} & \meanstd{98.48}{0.10} & \meanstd{98.09}{0.08} \\
             \cline{3-8}
             & & \multirow{2}{*}{square (31)} & \multirow{2}{*}{\meanstd{77.73}{0.21}} & \multirow{2}{*}{\meanstd{72.99}{0.18}} & \meanstd{74.76}{0.66} & \meanstd{56.55}{0.46} & \meanstd{64.39}{0.31} \\
             \cline{6-8}
             & & & & & \meanstd{78.82}{0.39} & \meanstd{89.44}{0.30} & \meanstd{83.79}{0.21} \\
             \cline{3-8}
             & & \multirow{2}{*}{primes (168)} & \multirow{2}{*}{\meanstd{71.93}{0.23}} & \multirow{2}{*}{\meanstd{64.78}{0.10}} & \meanstd{68.04}{0.85} & \meanstd{39.94}{0.43} & \meanstd{50.33}{0.23} \\
             \cline{6-8}
             & & & & & \meanstd{72.96}{0.43} & \meanstd{89.62}{0.37} & \meanstd{80.43}{0.24} \\
             \cline{3-8}
             & & \multirow{2}{*}{prime powers (193)} & \multirow{2}{*}{\meanstd{97.76}{0.04}} & \multirow{2}{*}{\meanstd{97.39}{0.05}} & \meanstd{97.55}{0.14} & \meanstd{96.12}{0.15} & \meanstd{96.83}{0.03} \\
             \cline{6-8}
             & & & & & \meanstd{97.87}{0.09} & \meanstd{98.67}{0.09} & \meanstd{98.27}{0.04} \\
             \cline{2-8}
             & \multicolumn{2}{c|}{\multirow{2}{*}{PC (8)}} & \multirow{2}{*}{\meanstd{68.21}{0.47}} & \multirow{2}{*}{\meanstd{55.37}{0.18}} & \meanstd{99.57}{0.30} & \meanstd{10.78}{0.36} & \meanstd{19.45}{0.59} \\
             \cline{6-8}
             & \multicolumn{1}{l}{} & & & & \meanstd{66.95}{0.48} & \meanstd{99.97}{0.02} & \meanstd{80.20}{0.35} \\
             \cline{2-8}
             & \multicolumn{2}{c|}{\multirow{2}{*}{PC (normalized, 8)}} & \multirow{2}{*}{\meanstd{84.33}{0.38}} & \multirow{2}{*}{\meanstd{87.78}{0.30}} & \meanstd{69.51}{0.61} & \meanstd{99.76}{0.04} & \meanstd{81.93}{0.42} \\
             \cline{6-8}
             & \multicolumn{1}{l}{} & & & & \meanstd{99.82}{0.03} & \meanstd{75.80}{0.59} & \meanstd{86.17}{0.38} \\
            \bottomrule
        \end{tabular}
        \caption{ Performance of decision tree (DT) and logistic regression (LR) models on distinguishing Galois groups of nonabelian octic extensions, using zeta coefficients (ZC) or polynomial coefficients (PC) as features, averaged over 5 different random splits. We report accuracy (ACC), balanced accuracy (BACC), per-class precision, recall, and F1. For the latter three metrics, top (resp. bottom) row corresponds to the class of $D_4$ (resp. $Q_8$). The majority baseline is 64.39\% ($Q_8$).}
        \label{tab:galois_8_nab}
    \end{center}
\end{table}

\begin{table}[H]
    % \scriptsize
    \begin{center}
        \begin{tabular}{c|c|c|c|c|c|c|c}
            \toprule
            Model & \multicolumn{2}{c|}{Feature} & ACC & BACC & Precision & Recall & F1 \\
            \midrule
            \multirow{10}{*}{DT} & \multirow{8}{*}{ZC} & \multirow{2}{*}{all (1000)} & \multirow{2}{*}{\meanstd{96.58}{0.10}} & \multirow{2}{*}{\meanstd{96.14}{0.15}} & \meanstd{94.37}{0.28} & \meanstd{94.93}{0.37} & \meanstd{94.65}{0.13} \\
             \cline{6-8}
             & & & & & \meanstd{97.62}{0.20} & \meanstd{97.35}{0.11} & \meanstd{97.48}{0.08} \\
             \cline{3-8}
             & & \multirow{2}{*}{squares (31)} & \multirow{2}{*}{\meanstd{97.20}{0.09}} & \multirow{2}{*}{\meanstd{96.91}{0.12}} & \meanstd{95.16}{0.29} & \meanstd{96.10}{0.25} & \meanstd{95.63}{0.16} \\
             \cline{6-8}
             & & & & & \meanstd{98.17}{0.13} & \meanstd{97.71}{0.11} & \meanstd{97.94}{0.07} \\
             \cline{3-8}
             & & \multirow{2}{*}{primes (168)} & \multirow{2}{*}{\meanstd{77.65}{0.21}} & \multirow{2}{*}{\meanstd{75.11}{0.33}} & \meanstd{64.06}{0.58} & \meanstd{68.09}{0.88} & \meanstd{66.01}{0.40} \\
             \cline{6-8}
             & & & & & \meanstd{84.62}{0.45} & \meanstd{82.13}{0.40} & \meanstd{83.35}{0.17} \\
             \cline{3-8}
             & & \multirow{2}{*}{prime powers (193)} & \multirow{2}{*}{\meanstd{96.64}{0.07}} & \multirow{2}{*}{\meanstd{96.25}{0.13}} & \meanstd{94.36}{0.19} & \meanstd{95.15}{0.33} & \meanstd{94.75}{0.14} \\
             \cline{6-8}
             & & & & & \meanstd{97.72}{0.14} & \meanstd{97.34}{0.10} & \meanstd{97.53}{0.05} \\
             \cline{2-8}
             & \multicolumn{2}{c|}{\multirow{2}{*}{PC (8)}} & \multirow{2}{*}{\meanstd{84.08}{0.26}} & \multirow{2}{*}{\meanstd{81.92}{0.28}} & \meanstd{74.59}{0.26} & \meanstd{75.94}{0.42} & \meanstd{75.26}{0.32} \\
             \cline{6-8}
             & \multicolumn{1}{l}{} & & & & \meanstd{88.64}{0.29} & \meanstd{87.89}{0.17} & \meanstd{88.27}{0.23} \\
            \midrule
            \multirow{14}{*}{LR} & \multirow{10}{*}{ZC} & \multirow{2}{*}{all (1000)} & \multirow{2}{*}{\meanstd{76.27}{0.25}} & \multirow{2}{*}{\meanstd{69.61}{0.47}} & \meanstd{66.61}{0.87} & \meanstd{51.23}{0.92} & \meanstd{57.91}{0.84} \\
             \cline{6-8}
             & & & & & \meanstd{79.41}{0.25} & \meanstd{87.99}{0.24} & \meanstd{83.48}{0.14} \\
             \cline{3-8}
             & & \multirow{2}{*}{all (normalized, 1000)} & \multirow{2}{*}{\meanstd{75.83}{0.26}} & \multirow{2}{*}{\meanstd{68.92}{0.43}} & \meanstd{66.00}{0.85} & \meanstd{49.87}{0.78} & \meanstd{56.81}{0.76} \\
             \cline{6-8}
             & & & & & \meanstd{78.95}{0.26} & \meanstd{87.98}{0.22} & \meanstd{83.22}{0.16} \\
             \cline{3-8}
             & & \multirow{2}{*}{squares (31)} & \multirow{2}{*}{\meanstd{67.88}{0.27}} & \multirow{2}{*}{\meanstd{51.12}{0.15}} & \meanstd{46.38}{2.25} & \meanstd{4.88}{0.20} & \meanstd{8.83}{0.37} \\
             \cline{6-8}
             & & & & & \meanstd{68.63}{0.32} & \meanstd{97.36}{0.14} & \meanstd{80.50}{0.19} \\
             \cline{3-8}
             & & \multirow{2}{*}{primes (168)} & \multirow{2}{*}{\meanstd{70.69}{0.23}} & \multirow{2}{*}{\meanstd{57.43}{0.29}} & \meanstd{61.94}{1.55} & \meanstd{20.87}{0.44} & \meanstd{31.21}{0.62} \\
             \cline{6-8}
             & & & & & \meanstd{71.74}{0.28} & \meanstd{94.00}{0.27} & \meanstd{81.37}{0.16} \\
             \cline{3-8}
             & & \multirow{2}{*}{prime powers (193)} & \multirow{2}{*}{\meanstd{76.01}{0.38}} & \multirow{2}{*}{\meanstd{69.02}{0.65}} & \meanstd{66.54}{0.40} & \meanstd{49.73}{1.56} & \meanstd{56.91}{1.07} \\
             \cline{6-8}
             & & & & & \meanstd{78.97}{0.58} & \meanstd{88.30}{0.29} & \meanstd{83.37}{0.23} \\
             \cline{2-8}
             & \multicolumn{2}{c|}{\multirow{2}{*}{PC (8)}} & \multirow{2}{*}{\meanstd{66.41}{0.26}} & \multirow{2}{*}{\meanstd{49.51}{0.10}} & \meanstd{25.85}{1.33} & \meanstd{2.88}{0.11} & \meanstd{5.18}{0.20} \\
             \cline{6-8}
             & \multicolumn{1}{l}{} & & & & \meanstd{67.90}{0.31} & \meanstd{96.13}{0.13} & \meanstd{79.59}{0.00} \\
             \cline{2-8}
             & \multicolumn{2}{c|}{\multirow{2}{*}{PC (normalized, 8)}} & \multirow{2}{*}{\meanstd{68.02}{0.31}} & \multirow{2}{*}{\meanstd{50.01}{0.08}} & \meanstd{29.98}{11.85} & \meanstd{0.31}{0.20} & \meanstd{0.61}{0.40} \\
             \cline{6-8}
             & \multicolumn{1}{l}{} & & & & \meanstd{68.13}{0.33} & \meanstd{99.71}{0.07} & \meanstd{80.95}{0.22} \\
            \bottomrule
        \end{tabular}
        \caption{ Performance of decision tree (DT) and logistic regression (LR) models on distinguishing abelian and nonabelian octic extensions, using zeta coefficients (ZC) or polynomial coefficients (PC) as features, averaged over 5 different random splits. We report accuracy (ACC), balanced accuracy (BACC), per-class precision, recall, and F1. For the latter three metrics, top (resp. bottom) row corresponds to the class of abelian (resp. nonabelian) extensions. The majority baseline is 67.89\% (nonabelian).}
        \label{tab:galois_8_ab_nab}
    \end{center}
\end{table}

\section{Figures}

\subsection{Quartic and nonic fields}

\begin{figure}[h]
    \centering
    \begin{subfigure}[t]{0.23\textwidth}
        \centering
        \scalebox{0.25}{
        \BinaryConfusionMatrix
          {$C_4$}{$C_2^2$}
          {4373.80}{0.80}
          {0.00}{32197.40}
        }
        \caption{DT, ZC}
        \label{fig:quartic_dt_zc}
    \end{subfigure}
    \hfill
    \begin{subfigure}[t]{0.23\textwidth}
        \centering
        \scalebox{0.25}{
        \BinaryConfusionMatrix
          {$C_4$}{$C_2^2$}
          {3420.40}{954.20}
          {817.20}{31380.20}
        }
        \caption{DT, PC}
        \label{fig:quartic_dt_pc}
    \end{subfigure}
    \hfill
    \begin{subfigure}[t]{0.23\textwidth}
        \centering
        \scalebox{0.25}{
        \BinaryConfusionMatrix
          {$C_4$}{$C_2^2$}
          {4233.40}{141.20}
          {74.80}{32212.60}
        }
        \caption{LR, normalized ZC}
        \label{fig:quartic_lr_zc_normalized}
    \end{subfigure}
    \hfill
    \begin{subfigure}[t]{0.23\textwidth}
        \centering
        \scalebox{0.25}{
        \BinaryConfusionMatrix
          {$C_4$}{$C_2^2$}
          {7.00}{4367.60}
          {0.00}{32197.40}
        }
        \caption{LR, normalized PC}
        \label{fig:quartic_lr_pc_normalized}
    \end{subfigure}
    \caption{Confusion matrices for the quartic Galois field experiments with (a) decision tree, using all ($n \le 1000$) zeta coefficients, (b) decision tree, using polynomial coefficients, (c) logistic regression, using all ($n \le 1000$) normalized zeta coefficients, (d) logistic regression, using normalized polynomial coefficients.}
    \label{fig:galois_4_cm}
\end{figure}

\begin{figure}[h]
    \centering
    \begin{subfigure}[t]{0.23\textwidth}
        \centering
        \scalebox{0.25}{
        \BinaryConfusionMatrix
          {9T1}{9T2}
          {56.80}{0.00}
          {0.00}{196.40}
        }
        \caption{DT, ZC}
        \label{fig:nonic_dt_zc}
    \end{subfigure}
    \hfill
    \begin{subfigure}[t]{0.23\textwidth}
        \centering
        \scalebox{0.25}{
        \BinaryConfusionMatrix
          {9T1}{9T2}
          {50.40}{6.80}
          {8.00}{188.40}
        }
        \caption{DT, PC}
        \label{fig:nonic_dt_pc}
    \end{subfigure}
    \hfill
    \begin{subfigure}[t]{0.23\textwidth}
        \centering
        \scalebox{0.25}{
        \BinaryConfusionMatrix
          {9T1}{9T2}
          {26.20}{30.60}
          {23.80}{172.60}
        }
        \caption{LR, normalized ZC}
        \label{fig:nonic_lr_zc_normalized}
    \end{subfigure}
    \hfill
    \begin{subfigure}[t]{0.23\textwidth}
        \centering
        \scalebox{0.25}{
        \BinaryConfusionMatrix
          {9T1}{9T2}
          {21.00}{35.80}
          {1.00}{195.40}
        }
        \caption{LR, normalized PC}
        \label{fig:nonic_lr_pc_normalized}
    \end{subfigure}
    \caption{Confusion matrices for the nonic Galois field experiments with (a) decision tree, using all ($n \le 1000$) zeta coefficients, (b) decision tree, using polynomial coefficients, (c) logistic regression, using all ($n \le 1000$) normalized zeta coefficients, (d) logistic regression, using normalized polynomial coefficients.}
    \label{fig:galois_9_cm}
\end{figure}

\begin{figure}[H]
    \centering
    \begin{subfigure}[b]{0.475\textwidth}
        \centering
        \includegraphics[width=\textwidth]{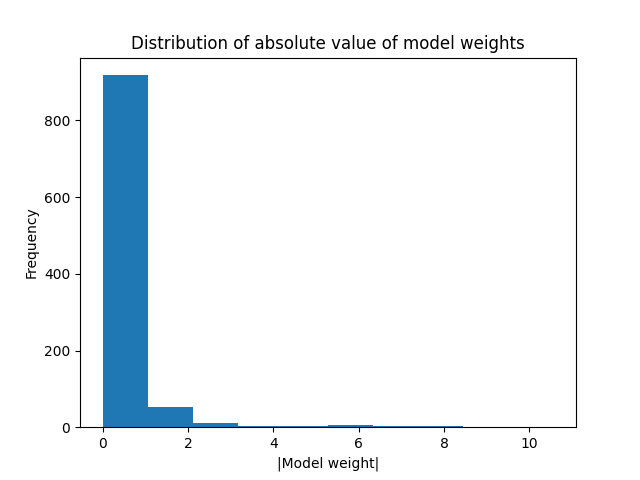}
        \caption[]%
        {{\small Quartic fields, the whole set of weights}}    
        \label{fig:mean and std of net14}
    \end{subfigure}
    \hfill
    \begin{subfigure}[b]{0.475\textwidth}  
        \centering 
        \includegraphics[width=\textwidth]{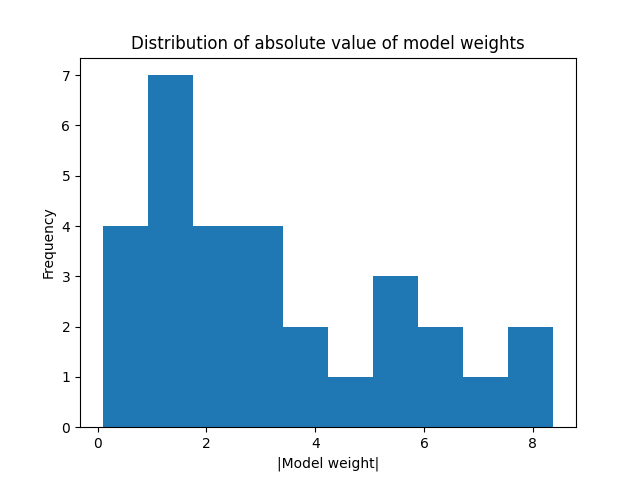}
        \caption[]%
        {{\small Quartic fields, square-indexed weights}}    
        \label{fig:mean and std of net24}
    \end{subfigure}
    \vskip\baselineskip
    \begin{subfigure}[b]{0.475\textwidth}   
        \centering 
        \includegraphics[width=\textwidth]{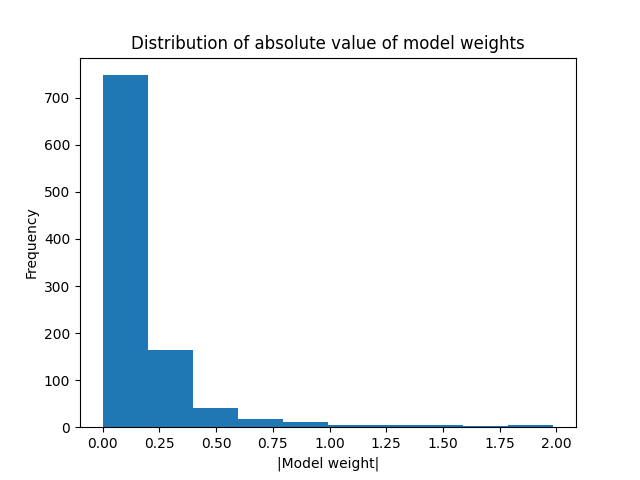}
        \caption[]%
        {{\small Nonic fields, the whole set of weights}}    
        \label{fig:mean and std of net34}
    \end{subfigure}
    \hfill
    \begin{subfigure}[b]{0.475\textwidth}   
        \centering 
        \includegraphics[width=\textwidth]{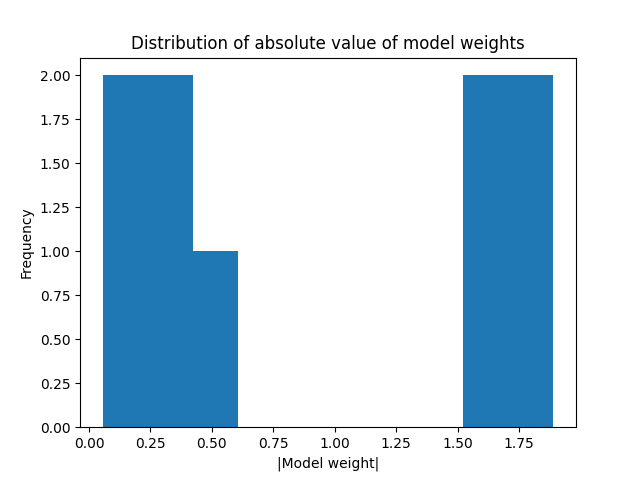}
        \caption[]%
        {{\small Nonic fields, cube-indexed weights}}    
        \label{fig:mean and std of net44}
    \end{subfigure}
    \caption[]
    {\small Distribution of the absolute values of the logistic regression model weights  trained on normalized zeta coefficients of quartic and nonic Galois extensions.} 
    \label{fig:galois_4_9_lr_weights}
\end{figure}

\subsection{Sextic and decic fields}

\begin{figure}[h]
    \centering
    \begin{subfigure}[t]{0.23\textwidth}
        \centering
        \scalebox{0.25}{
        \BinaryConfusionMatrix
          {$C_{6}$}{$S_{3}$}
          {4084.20}{145.00}
          {181.00}{27023.40}
        }
        \caption{DT, ZC}
        \label{fig:sextic_dt_zc}
    \end{subfigure}
    \hfill
    \begin{subfigure}[t]{0.23\textwidth}
        \centering
        \scalebox{0.25}{
        \BinaryConfusionMatrix
          {$C_{6}$}{$S_{3}$}
          {3834.80}{394.40}
          {373.40}{26831.00}
        }
        \caption{DT, PC}
        \label{fig:sextic_dt_pc}
    \end{subfigure}
    \hfill
    \begin{subfigure}[t]{0.23\textwidth}
        \centering
        \scalebox{0.25}{
        \BinaryConfusionMatrix
          {$C_{6}$}{$S_{3}$}
          {3790.80}{408.40}
          {252.80}{26951.60}
        }
        \caption{LR, normalized ZC}
        \label{fig:sextic_lr_zc_normalized}
    \end{subfigure}
    \hfill
    \begin{subfigure}[t]{0.23\textwidth}
        \centering
        \scalebox{0.25}{
        \BinaryConfusionMatrix
          {$C_{6}$}{$S_{3}$}
          {59.60}{4196.60}
          {2.00}{27202.40}
        }
        \caption{LR, normalized PC}
        \label{fig:sextic_lr_pc_normalized}
    \end{subfigure}
    \caption{Confusion matrices for the sextic Galois field experiments with (a) decision tree, using all ($n \le 1000$) zeta coefficients, (b) decision tree, using polynomial coefficients, (c) logistic regression, using all ($n \le 1000$) normalized zeta coefficients, (d) logistic regression, using normalized polynomial coefficients.}
    \label{fig:galois_6_cm}
\end{figure}

\begin{figure}[h]
    \centering
    \begin{subfigure}[t]{0.23\textwidth}
        \centering
        \scalebox{0.25}{
        \BinaryConfusionMatrix
          {$C_{10}$}{$D_{5}$}
          {356.40}{14.40}
          {16.40}{708.00}
        }
        \caption{DT, ZC}
        \label{fig:decic_dt_zc}
    \end{subfigure}
    \hfill
    \begin{subfigure}[t]{0.23\textwidth}
        \centering
        \scalebox{0.25}{
        \BinaryConfusionMatrix
          {$C_{10}$}{$D_{5}$}
          {300.60}{70.20}
          {75.40}{649.00}
        }
        \caption{DT, PC}
        \label{fig:decic_dt_pc}
    \end{subfigure}
    \hfill
    \begin{subfigure}[t]{0.23\textwidth}
        \centering
        \scalebox{0.25}{
        \BinaryConfusionMatrix
          {$C_{10}$}{$D_{5}$}
          {343.60}{27.20}
          {30.80}{693.60}
        }
        \caption{LR, normalized ZC}
        \label{fig:decic_lr_zc_normalized}
    \end{subfigure}
    \hfill
    \begin{subfigure}[t]{0.23\textwidth}
        \centering
        \scalebox{0.25}{
        \BinaryConfusionMatrix
          {$C_{10}$}{$D_{5}$}
          {0.40}{370.40}
          {0.00}{724.40}
        }
        \caption{LR, normalized PC}
        \label{fig:decic_lr_pc_normalized}
    \end{subfigure}
    \caption{Confusion matrices for the decic Galois field experiments with (a) decision tree, using all ($n \le 1000$) zeta coefficients, (b) decision tree, using polynomial coefficients, (c) logistic regression, using all ($n \le 1000$) normalized zeta coefficients, (d) logistic regression, using normalized polynomial coefficients.}
    \label{fig:galois_10_cm}
\end{figure}

\begin{figure}[H]
    \centering
    \begin{subfigure}[b]{0.475\textwidth}
        \centering
        \includegraphics[width=\textwidth]{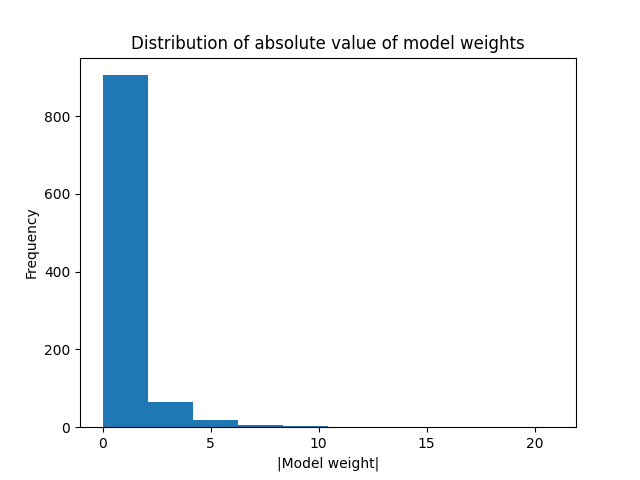}
        \caption[]%
        {{\small Sextic fields, the whole set of weights}}    
        \label{fig:mean and std of net14}
    \end{subfigure}
    \hfill
    \begin{subfigure}[b]{0.475\textwidth}  
        \centering 
        \includegraphics[width=\textwidth]{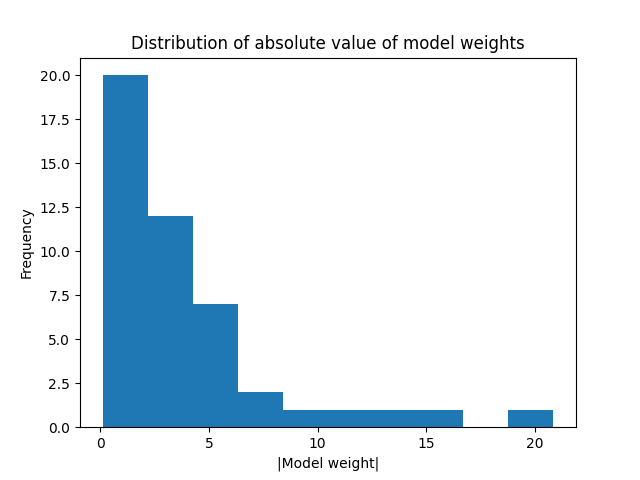}
        \caption[]%
        {{\small Sextic fields, square-indexed weights}}    
        \label{fig:mean and std of net24}
    \end{subfigure}
    \vskip\baselineskip
    \begin{subfigure}[b]{0.475\textwidth}   
        \centering 
        \includegraphics[width=\textwidth]{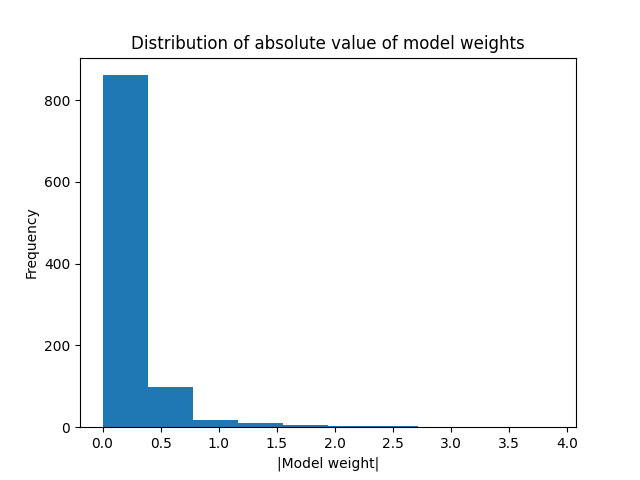}
        \caption[]%
        {{\small Decic fields, the whole set of weights}}    
        \label{fig:mean and std of net34}
    \end{subfigure}
    \hfill
    \begin{subfigure}[b]{0.475\textwidth}   
        \centering 
        \includegraphics[width=\textwidth]{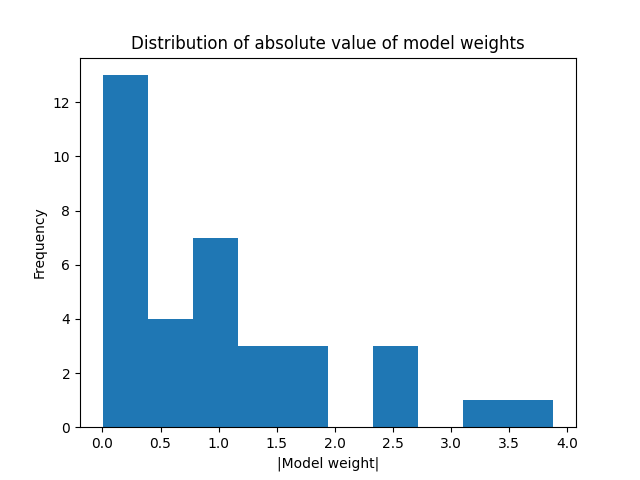}
        \caption[]%
        {{\small Decic fields, cube-indexed weights}}    
        \label{fig:mean and std of net44}
    \end{subfigure}
    \caption[]
    {\small Distribution of the absolute values of the logistic regression model weights  trained on normalized zeta coefficients of sextic and decic Galois extensions.} 
    \label{fig:galois_6_10_lr_weights}
\end{figure}

\subsection{Octic fields}

\begin{figure}[h]
    \centering
    \begin{subfigure}[t]{0.23\textwidth}
        \centering
        \scalebox{0.25}{
        \ConfusionMatrix{3}{$C_8$,$C_4 \times C_2$, $C_2^3$}{
          1/1/1188.80, 2/1/52.40,   3/1/0.00,
          1/2/48.60,   2/2/3860.60, 3/2/0.80,
          1/3/0.00,    2/3/0.00,    3/3/2153.40
        }
        }
        \caption{DT, ZC}
        \label{fig:octic_ab_dt_zc}
    \end{subfigure}
    \hfill
    \begin{subfigure}[t]{0.23\textwidth}
        \centering
        \scalebox{0.25}{
        \ConfusionMatrix{3}{$C_8$,$C_4 \times C_2$, $C_2^3$}{
          1/1/1028.40, 2/1/205.00,  3/1/7.80,
          1/2/205.00,  2/2/3095.00, 3/2/610.00,
          1/3/5.20,    2/3/631.60,  3/3/1516.60
        }
        }
        \caption{DT, PC}
        \label{fig:octic_nab_dt_pc}
    \end{subfigure}
    \hfill
    \begin{subfigure}[t]{0.23\textwidth}
        \centering
        \scalebox{0.25}{
        \ConfusionMatrix{3}{$C_8$,$C_4 \times C_2$, $C_2^3$}{
          1/1/980.40,  2/1/258.20,  3/1/2.60,
          1/2/168.00,  2/2/3690.40, 3/2/51.60,
          1/3/1.00,    2/3/38.60,   3/3/2113.80
        }
        }
        \caption{LR, normalized ZC}
        \label{fig:octic_nab_lr_zc_normalized}
    \end{subfigure}
    \hfill
    \begin{subfigure}[t]{0.23\textwidth}
        \centering
        \scalebox{0.25}{
        \ConfusionMatrix{3}{$C_8$,$C_4 \times C_2$, $C_2^3$}{
          1/1/431.20,  2/1/791.60,  3/1/18.40,
          1/2/43.40,   2/2/3451.00, 3/2/415.60,
          1/3/0.00,    2/3/1296.00, 3/3/857.40
        }
        }
        \caption{LR, normalized PC}
        \label{fig:octic_nab_lr_pc_normalized}
    \end{subfigure}
    \caption{Confusion matrices for the nonabelian octic Galois field experiments with (a) decision tree, using all ($n \le 1000$) zeta coefficients, (b) decision tree, using polynomial coefficients, (c) logistic regression, using all ($n \le 1000$) normalized zeta coefficients, (d) logistic regression, using normalized polynomial coefficients.}
    \label{fig:galois_8_ab_cm}
\end{figure}

\begin{figure}[h]
    \centering
    \begin{subfigure}[t]{0.23\textwidth}
        \centering
        \scalebox{0.25}{
        \BinaryConfusionMatrix
          {$D_4$}{$Q_8$}
          {5447.80}{111.40}
          {96.80}{9954.60}
        }
        \caption{DT, ZC}
        \label{fig:octic_nab_dt_zc}
    \end{subfigure}
    \hfill
    \begin{subfigure}[t]{0.23\textwidth}
        \centering
        \scalebox{0.25}{
        \BinaryConfusionMatrix
          {$D_4$}{$Q_8$}
          {5460.60}{98.60}
          {95.20}{9956.20}
        }
        \caption{DT, PC}
        \label{fig:octic_nab_dt_pc}
    \end{subfigure}
    \hfill
    \begin{subfigure}[t]{0.23\textwidth}
        \centering
        \scalebox{0.25}{
        \BinaryConfusionMatrix
          {$D_4$}{$Q_8$}
          {5327.80}{231.40}
          {153.00}{9898.40}
        }
        \caption{LR, normalized ZC}
        \label{fig:octic_nab_lr_zc_normalized}
    \end{subfigure}
    \hfill
    \begin{subfigure}[t]{0.23\textwidth}
        \centering
        \scalebox{0.25}{
        \BinaryConfusionMatrix
          {$D_4$}{$Q_8$}
          {5545.80}{13.40}
          {2432.40}{7619.00}
        }
        \caption{LR, normalized PC}
        \label{fig:octic_nab_lr_pc_normalized}
    \end{subfigure}
    \caption{Confusion matrices for the nonabelian octic Galois field experiments with (a) decision tree, using all ($n \le 1000$) zeta coefficients, (b) decision tree, using polynomial coefficients, (c) logistic regression, using all ($n \le 1000$) normalized zeta coefficients, (d) logistic regression, using normalized polynomial coefficients.}
    \label{fig:galois_8_nab_cm}
\end{figure}

\begin{figure}[H]
    \centering
    \begin{subfigure}[t]{0.23\textwidth}
        \centering
        \scalebox{0.25}{
        \BinaryConfusionMatrix
          {ab}{nab}
          {6934.00}{370.60}
          {413.80}{15196.80}
        }
        \caption{DT, ZC}
        \label{fig:octic_ab_nab_dt_zc}
    \end{subfigure}
    \hfill
    \begin{subfigure}[t]{0.23\textwidth}
        \centering
        \scalebox{0.25}{
        \BinaryConfusionMatrix
          {ab}{nab}
          {5546.80}{1757.80}
          {1889.80}{13720.80}
        }
        \caption{DT, PC}
        \label{fig:octic_ab_nab_dt_pc}
    \end{subfigure}
    \hfill
    \begin{subfigure}[t]{0.23\textwidth}
        \centering
        \scalebox{0.25}{
        \BinaryConfusionMatrix
          {ab}{nab}
          {3642.80}{3661.80}
          {1876.20}{13734.40}
        }
        \caption{LR, normalized ZC}
        \label{fig:octic_ab_nab_lr_zc_normalized}
    \end{subfigure}
    \hfill
    \begin{subfigure}[t]{0.23\textwidth}
        \centering
        \scalebox{0.25}{
        \BinaryConfusionMatrix
          {ab}{nab}
          {22.40}{7282.20}
          {45.20}{15565.40}
        }
        \caption{LR, normalized PC}
        \label{fig:octic_ab_nab_lr_pc_normalized}
    \end{subfigure}
    \caption{Confusion matrices for the abelian and nonabelian octic Galois field experiments with (a) decision tree, using all ($n \le 1000$) zeta coefficients, (b) decision tree, using polynomial coefficients, (c) logistic regression, using all ($n \le 1000$) normalized zeta coefficients, (d) logistic regression, using normalized polynomial coefficients.}
    \label{fig:galois_8_ab_nab_cm}
\end{figure}

% --- Bibliography ---

% Start a bibliography with one item.
% Citation example: "\cite{williams}".
\newpage
\bibliographystyle{acm} % We choose the "plain" reference style
\bibliography{refs} % Entries are in the refs.bib file

% --- Document ends here ---

\medskip

{Department of Mathematics, University of Connecticut, Storrs, CT 06269, USA  \hfill \break \indent Korea Institute for Advanced Study, Seoul 02455, Republic of Korea}

\href{mailto:khlee@math.uconn.edu}{khlee@math.uconn.edu}

\medskip

{Department of Mathematics, University of California---Berkeley, Berkeley, CA 94720, USA}

\href{mailto:seewoo5@berkeley.edu}{seewoo5@berkeley.edu}

\end{document}